\def\date{31.1.2011}
\newcommand{\1}{\mathbf{1}}
\newcommand{\bn}{{\bf n}} 
\newcommand{\co}{\mathop{{\rm co}}\nolimits}
\newcommand{\Part}{\mathop{{\rm Part}}\nolimits}
\newcommand{\Tab}{\mathop{{\rm Tab}}\nolimits}
\newcommand{\Inm}{I^{\bf n}}
\newcommand{\sot}{\textsc{so}}
\newcommand{\wot}{\textsc{wo}}
\newcommand{\trace}{{\rm Tr}\,}
\newcommand{\Dent}{{\rm Dent}}
\newcommand{\StrExp}{{\rm StrExp}}
\newcommand{\cX}{{\mathcal X}}
\begin{document} 



\title{Schur--Weyl Theory for $C^*$-algebras} 
\author{Daniel Belti\c t\u a\footnote{Institute of Mathematics ``Simion
Stoilow'' of the Romanian Academy, 
P.O. Box 1-764, Bucharest, Romania;  
email: \texttt{Daniel.Beltita@imar.ro}}\ \ 
and Karl-Hermann Neeb
\footnote{Department Mathematik, FAU Erlangen-N\"urnberg, 
Bismarckstrasse 1$^{\scriptscriptstyle 1} \hskip-3pt / \hskip-2pt _{\scriptscriptstyle 2}$, 
91054-Erlangen, Germany;
email: \texttt{neeb@mi.uni-erlangen.de}}}

\maketitle

\begin{abstract} To each irreducible infinite dimensional representation 
$(\pi,\cH)$ of a $C^*$-algebra $\cA$, we associate 
a collection of irreducible norm-continuous unitary representations 
$\pi_{\lambda}^\cA$ of its unitary group $\U(\cA)$, whose equivalence 
classes are parameterized by highest weights in the same 
way as the irreducible bounded unitary representations 
of the group $\U_\infty(\cH) = \U(\cH) \cap (\1 + K(\cH))$ are. 
These are precisely the representations 
arising in the decomposition of the tensor products 
$\cH^{\otimes n} \otimes (\cH^*)^{\otimes m}$ under $\U(\cA)$. 
We show that these representations can be realized by 
sections of holomorphic line bundles over homogeneous 
K\"ahler manifolds on which $\U(\cA)$ acts transitively and 
that the corresponding norm-closed momentum sets 
$I_{\pi_\lambda^\cA}^{\bf n} \subeq \fu(\cA)'$ 
distinguish inequivalent representations of this type. \\ 
\textit{Mathematics Subject Classification 2000:} 22E65, 22E45, 46L05\\
\textit{Keywords and phrases:} 
infinite dimensional Lie group, $C^*$-algebra, unitary group, 
Schur--Weyl duality 
\end{abstract} 

\tableofcontents

\section*{Introduction} 

The relationship between properties of $C^*$-algebras 
and the representation theory of their unitary groups has come up 
in the literature ever since the classification in \cite{Se57} of the representations of the unitary group on a Hilbert space satisfying a certain positivity condition. 
Other classification results in this direction were later obtained;  
see for instance \cite{Ki73}, \cite{SV75}, \cite{Ol79}, \cite{Ol84}, \cite{Pi88}, \cite{Bo93} and the references therein. 
The broad issue of developing a systematic approach to the classification 
of representations of the unitary groups of 
$C^*$-algebras was raised in \cite{Pa83}, 
and this is precisely the problem which we address in the present work  
---by using different tools, however:  
namely, the momentum sets of unitary representations (cf.\ \cite{Ne00}, \cite{Ne08}, \cite{Ne09}, and \cite{Ne10}). 
For a Banach--Lie group $G$ and a norm-continuous unitary representation 
$\pi \: G \to \U(\cH)$, there are two variants of the momentum set. Both rely on the momentum map
\[ \Phi_\pi \: \bP(\cH) = \{ [v] = \C v \: 0\not= v \in \cH\} \to \g', \quad 
\Phi_\pi([v])(x) = \frac{\la -i\dd\pi(x)v,v\ra}{\la v,v\ra}.\] 
The larger one is the weak-$*$-closure $I_\pi$ of $\conv(\im(\Phi_\pi))$, and the smaller 
one is the norm-closure $I_\pi^{\bn}$ of this set. 
One connection between the present approach and the one of \cite{Pa83} will be established in Proposition~\ref{contr_SW} below. 
Before to describe the contents of our paper in some more detail, we shall briefly survey some progress that has been made so far. 
Specifically, the goal is to understand the irreducible bounded unitary representations 
of $\U(\cA)$ for a unital $C^*$-algebra $\cA$ in terms of 
irreducible algebra representations of $\cA$, 
whose classification is much better understood  
(as seen from \cite[Cor.~8]{GK60} or \cite[Thm.~1.1]{KOS03}, and also from \cite{Vo76}). 

If $\cA$ is commutative, then 
$\exp \: \fu(\cA) \to \U(\cA)$ induces a covering of the identity 
component, so that $\U(\cA)_0 \cong \fu(\cA)/\Gamma_\cA$, where 
$\Gamma_\cA = \ker(\exp_\cA\vert_{\fu(\cA)})$ is isomorphic to the discrete subgroup of 
$(\cA,+)$ generated by the projections. Since $\U(\cA)_0$ 
is divisible, we have 
$\U(\cA) \cong \U(\cA)_0 \times \pi_0(\U(\cA))$. 
This leads to the explicit description of the 
character group of $\U(\cA)$ in terms of data associated 
to $\cA$: 
\[ \U(\cA)\,\hat{} \cong 
\{ \alpha \in \fu(\cA)'\: \alpha(\Gamma_A) \subeq 2 \pi \Z\} 
\times \pi_0(\U(\cA))\,\hat{}.\] 
This description of the unitary dual of $\U(\cA)$ shows that, in 
general, it is not generated by the restrictions of 
algebra characters to $\cA$. This is only true if 
$\hat\cA = \Hom(\cA,\C)$ is totally disconnected (cf.\ \cite{Au93}). 

The next step 
is to consider algebras of the 
form $\cA = M_n(C(X))$, $X$ a compact space. 
For $X = \{*\}$ we have $\cA \cong M_n(\C)$, and 
the classical Schur--Weyl theory implies that 
all irreducible unitary representations of $\U(\cA) 
\cong \U_n(\C)$ occur in some finite tensor product of the identical 
representation on $\C^n$ and its dual. 
In general, $\U(\cA) = \U_n(C(X))$, 
and the irreducible representations of the identity component of this 
groups are described in \cite[Sect.~6]{NS10}. Actually 
\cite{NS10} deals with the group 
$\SU_n(\cA)_0$ corresponding to $\su_n(\cA)$, but since 
$\fu_n(\cA) \cong \fu(\cA) \oplus \su_n(\cA)$, 
all irreducible unitary representations of 
$\U_n(\cA)_0$ restrict to irreducible representations on 
$\SU_n(\cA)_0$ and the central subgroup $\U(\cA)_0\1$ is mapped 
into $\T\1$. In loc.\ cit.\ it is shown in particular 
that all irreducible representations 
factor through some homomorphism 
$\SU_n(C(X)) \to \SU_n(\C)^{|F|}$, induced by some 
evaluation map $\ev_F \: C(X) \to \C^F$, where $F \subeq X$ is a finite set. 
From the corresponding result for $\SU_n(\C)$, it thus follows 
that all irreducible bounded unitary representations of 
$\SU_n(\cA)$ embed in some tensor product of finitely 
many irreducible representations of $\cA$, which 
are given by evaluations $\ev_x \: M_n(C(X)) \to M_n(\C)$ in points of $X$ 
(\cite[Cor. 10.4.4]{Dix64}). 

In the present paper we analyze those irreducible representations 
of $\U(\cA)$ obtained from an irreducible algebra 
representation $(\pi, \cH)$ by decomposing 
the tensor products $\cH^{\otimes n} \otimes (\cH^*)^{\otimes m}$ 
under the action of $\U(\cA)$. 

The main method of our investigation consists in using the information encoded in the momentum sets of these representations. 
It is quite remarkable that, even in this situation of representations of infinite dimensional Lie groups, 
the data provided by the highest weight theory can be read off the extreme points of the momentum sets. 
Therefore, it eventually turns out that a mere look at suitable momentum sets in principle suffices for establishing whether two of these representations are 
equivalent. 
This approach to unitary representations of $\U(\cA)$ connects naturally with the 
classical $C^*$-algebraic methods. If $\pi$ is the restriction of an algebra 
representation $\pi \: \cA \to B(\cH)$, then its momentum set $I_\pi$ can be identified with the 
state space of the $C^*$-algebra $\pi(\cA) \cong \cA/\ker\pi$, hence 
contains no more information than 
the kernel of $\pi$ (\cite[Thm.~X.5.12]{Ne00}). 
The norm-closed momentum set $I_\pi^\bn$ can be identified with the 
normal state space of the von Neumann algebra $\pi(\cA)'' \cong \cA^{**}/\ker\pi^{**}$, where 
$\pi^{**} \: \cA^{**} \to B(\cH)$ is the associated representation of the enveloping $W^*$-algebra of 
$\cA$. Equivalently, $I_\pi^\bn$ determines the quasi-equivalence class of the algebra 
representation $\pi$. 

Here follows a more specific description of the contents of our paper. 
Section~\ref{Sect1} records general definitions, remarks and some auxiliary facts needed later on. 
We also introduce here the new notion of norm-closed momentum set for uniformly continuous representations of Banach--Lie groups. 

Section~\ref{Sect2} includes an updated exposition on the irreducible representations of the unitary group $\U_\infty(\cH)$ consisting of 
the unitary operators on the Hilbert space $\cH$ which are compact perturbations of the identity operator. 
The topics treated here are: highest weight theory, realization in tensor products, and geometric realizations on line bundles over flag manifolds. 
Moreover, we include here the first applications of the momentum sets in representation theory of unitary groups, by proving that the unitary irreducible representations of $\U_\infty(\cH)$ are distinguished from each other by their momentum sets. 

In Section~\ref{Sect3} we extend these results to irreducible representations of unitary groups of $C^*$-algebras. 
Thus, we obtain rather precise descriptions of the corresponding norm-closed momentum sets and show that the extreme points of any of these momentum sets is a coadjoint orbit, just as in the case of compact Lie groups. 
This fact plays a crucial role for the applications we make to the problem of deciding whether two representations are unitarily equivalent or not. 

In Section~\ref{Sect4} we use some basic notions of infinite dimensional convexity in order to get additional information on the extreme points of the momentum sets, by looking at the continuity points of the identity map from the norm topology on a momentum set to the weak-$*$-topology on the same set. 

The paper concludes by two appendices. 
The first of them records several auxiliary results of a rather algebraic nature needed in the main body of the paper. 
The second appendix is devoted to a discussion on topological properties and extreme points for the orbits of certain infinite Weyl groups; these properties are needed in Section~\ref{Sect3} and are also interesting on their own.

\section{Momentum sets of bounded unitary representations}\label{Sect1}

The present section records the preliminaries for the results to be obtained later on. 
Throughout this paper, $\cH$ denotes an infinite dimensional complex 
Hilbert space, $B(\cH)$ the $C^*$-algebra of bounded operators on $\cH$,  
and $K(\cH) \trile B(\cH)$ the ideal of compact operators. 
Moreover, we denote by $F(\cH)$ the ideal of finite-rank operators 
and by $\fS_p(\cH)$ the $p$-th Schatten ideal if $1\le p\le\infty$, 
so that $\fS_\infty(\cH)= K(\cH)$.
Accordingly we write $\U_p(\cH) := \U(\cH) \cap (\1 + \fS_p(\cH))$ for the 
corresponding restricted unitary groups. 
We sometimes denote by \wot\ the weak operator topology on $B(\cH)$,  
by \sot\ the strong operator topology, and by $w^*$ the weak-$*$-topology 
of $B(\cH)$ when it is thought of as the topological dual 
of $\fS_1(\cH)$ by using the trace pairing.  
More generally, for any Banach space $\cX$ with the dual $\cX'$ and any subset $A\subeq\cX'$ we denote its norm-closure by $\oline{A}^{\bn}$ 
and its weak-$*$-closure by $\oline{A}^{w^*}$. 

In this note we only consider 
unitary representations $(\pi, \cH)$ of a Banach--Lie group 
$G$ which are {\it bounded} in the sense that 
$\pi \: G \to \U(\cH)$ is a morphism of Banach--Lie groups, 
which is equivalent to $\pi$ being continuous with respect to the 
operator norm on $\U(\cH)$. 
Then the derived representation $\dd \pi \: \g \to \fu(\cH)$ 
is a morphism of Banach--Lie algebras, and we have the {\it momentum 
map} 
$$ \Phi_\pi \: \bP(\cH) = \{ [v] =  \C v \: 0 \not= v \in \cH\} \to \g', 
\quad 
\Phi_\pi([v])(x) := \frac{1}{i}\frac{\la \dd\pi(x)v,v\ra}{\la v,v\ra}, $$
corresponding to the Hamiltonian action of $G$ on the 
K\"ahler manifold $\bP(\cH)$. 

\begin{rem}\label{alternative}
One can provide as follows an alternative description for the momentum map, 
which is specific to the case of the bounded representations. 
For every $v\in\cH\setminus\{0\}$ let us consider the corresponding rank-one orthogonal projection $P_v=\frac{1}{\Vert v\Vert^2}\la\cdot,v\ra v\in B(\cH)$. 
Then $[v]\mapsto P_v$ is a well-defined $\U(\cH)$-equivariant diffeomorphism of $\bP(\cH)$ onto the coadjoint orbit of $\U(\cH)$ consisting of the rank-one orthogonal projections on $\cH$. 
It is easily seen that 
\begin{equation}\label{alternative_eq1}
(\forall [v]\in\bP(\cH))\quad 
\Phi_\pi([v])=-i\trace(\dd\pi(\cdot)P_v)\in\g'.
\end{equation}
If we denote by $\Herm_1(\cH)$ the space of self-adjoint trace-class 
operators on $\cH$, then it follows by the above formula that the momentum map extends 
to a bounded linear map $\Phi_\pi\colon\Herm_1(\cH)\to\g'$, 
$\Phi_\pi(T)=-i\trace(\dd\pi(\cdot)T)$. 
The latter map is the dual $\dd\pi' \: \fu(\cH)' \to \g'$ 
of the derived representation, restricted 
to $\Herm_1(\cH)$, considered as a subspace of $\fu(\cH)'$.
\end{rem}

\begin{defn}\label{mom_def}
The norm-closed convex hull 
$$\Inm_\pi:= \oline{\conv}^{\bn}(\im(\Phi_\pi)) \subeq \g' $$ 
of the image of the momentum map $\Phi_\pi$
is called the {\it norm-closed (convex) momentum set of $\pi$}, while 
the weak-$*$-closed convex hull 
$$ I_\pi := \oline{\conv}^{w^*}(\im(\Phi_\pi)) \subeq \g' $$
is called the {\it (convex) momentum set of $\pi$}. 
It is clear that $\Inm_\pi$ is a weak-$*$-dense subset of 
the weak-$*$-compact set $I_\pi$. 
\end{defn}

For later use we now record a lemma which 
applies in particular to unitary groups of $C^*$-algebras and 
shows the significance 
of the momentum sets in the framework of the contractive representation theory of \cite{Pa83} and \cite{Pa87}. 

\begin{lem}\label{contr}
Let $G$ be a Banach--Lie group with the Lie algebra $\g$. 
Assume that the topologies of $G$ and $\g$ are defined by a left invariant metric $d(\cdot,\cdot)$ and a norm $\Vert\cdot\Vert$, respectively, such that 
$$(\forall x\in\g)\quad 
\limsup_{t\to 0}\frac{d(\exp_G(tx),\1)}{t}\le\Vert x\Vert. $$
Then, for every bounded representation $\pi\colon G\to\U(\cH)$, we have 
$$\sup\{\Vert\mu\Vert\:  \mu\in I_\pi\}
=\Vert\dd\pi\Vert
\le \sup\Bigl\{\frac{\Vert\pi(g_1)-\pi(g_2)\Vert}{d(g_1,g_2)}\:  
g_1,g_2\in G,\ g_1\ne g_2\Bigr\}.$$
\end{lem}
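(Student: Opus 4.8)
The plan is to prove the two assertions separately: the equality $\sup\{\Vert\mu\Vert\colon\mu\in I_\pi\}=\Vert\dd\pi\Vert$, and the estimate $\Vert\dd\pi\Vert\le L$, where $L$ denotes the right-hand supremum. The conceptual crux of the first assertion is that each operator $\dd\pi(x)$ is skew-adjoint, hence normal, so that its operator norm coincides with its numerical radius. Concretely, for a unit vector $v$ the definition of $\Phi_\pi$ gives $|\Phi_\pi([v])(x)|=|\la\dd\pi(x)v,v\ra|$, whence
$$\sup_{[v]\in\bP(\cH)}|\Phi_\pi([v])(x)|=\sup_{\Vert v\Vert=1}|\la\dd\pi(x)v,v\ra|=\Vert\dd\pi(x)\Vert\qquad(x\in\g).$$
First I would deduce "$\le$": since $|\Phi_\pi([v])(x)|\le\Vert\dd\pi(x)\Vert\le\Vert\dd\pi\Vert\,\Vert x\Vert$, every $\Phi_\pi([v])$ lies in the ball $\{\mu\colon\Vert\mu\Vert\le\Vert\dd\pi\Vert\}$, which is convex and weak-$*$-closed (the dual norm $\mu\mapsto\Vert\mu\Vert=\sup_{\Vert x\Vert\le1}|\mu(x)|$ being convex and weak-$*$-lower semicontinuous as a supremum of the weak-$*$-continuous functionals $\mu\mapsto|\mu(x)|$). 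Hence this ball contains $I_\pi=\oline{\conv}^{w^*}(\im\Phi_\pi)$, giving $\sup_{\mu\in I_\pi}\Vert\mu\Vert\le\Vert\dd\pi\Vert$. For the reverse inequality, the numerical-radius identity yields, for each $x$,
$$\Vert\dd\pi(x)\Vert=\sup_{[v]}|\Phi_\pi([v])(x)|\le\sup_{\mu\in I_\pi}|\mu(x)|\le\Big(\sup_{\mu\in I_\pi}\Vert\mu\Vert\Big)\Vert x\Vert,$$
and taking the supremum over $\Vert x\Vert\le1$ gives $\Vert\dd\pi\Vert\le\sup_{\mu\in I_\pi}\Vert\mu\Vert$, completing the equality.

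For the estimate $\Vert\dd\pi\Vert\le L$, I would first use the left-invariance of $d$ together with the unitarity of $\pi$ to rewrite $L=\sup_{g\ne\1}\Vert\pi(g)-\1\Vert/d(g,\1)$, since $\Vert\pi(g_1)-\pi(g_2)\Vert=\Vert\1-\pi(g_1^{-1}g_2)\Vert$ and $d(g_1,g_2)=d(\1,g_1^{-1}g_2)$. Fixing $x\in\g$ with $x\ne0$ and setting $g_t=\exp_G(tx)$, the relation $\pi(\exp_G(tx))=\exp(t\,\dd\pi(x))$ (valid since $\pi$ is a morphism of Banach--Lie groups) shows that $t^{-1}(\pi(g_t)-\1)\to\dd\pi(x)$ in operator norm, so $\Vert\dd\pi(x)\Vert=\lim_{t\to0^+}\Vert\pi(g_t)-\1\Vert/t$. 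For small $t>0$ one has $g_t\ne\1$, and factoring
$$\frac{\Vert\pi(g_t)-\1\Vert}{t}=\frac{\Vert\pi(g_t)-\1\Vert}{d(g_t,\1)}\cdot\frac{d(g_t,\1)}{t}\le L\cdot\frac{d(g_t,\1)}{t}.$$
Passing to $\limsup_{t\to0^+}$ and invoking the standing hypothesis $\limsup_{t\to0}d(\exp_G(tx),\1)/t\le\Vert x\Vert$ yields $\Vert\dd\pi(x)\Vert\le L\Vert x\Vert$; taking the supremum over the unit ball of $\g$ finishes this part.

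As for difficulties, no single step is deep: the argument rests on the identification of the operator norm of the skew-adjoint operators $\dd\pi(x)$ with their numerical radius, and on the compatibility hypothesis between $d$ and $\Vert\cdot\Vert$ combined with the norm-differentiability of $t\mapsto\pi(\exp_G(tx))$ at $0$. The only points that require a little care are the weak-$*$ lower semicontinuity of the dual norm---needed so that the bound on $\im(\Phi_\pi)$ survives passage to the weak-$*$-closed convex hull $I_\pi$---and checking that $g_t\ne\1$ for small $t\ne0$, so that dividing by $d(g_t,\1)$ is legitimate (clear for $x\ne0$, since $\exp_G$ is a local diffeomorphism at the origin). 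I expect the bookkeeping around these limits, rather than any genuine conceptual obstacle, to be where the proof must be written most carefully.
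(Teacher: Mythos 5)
Your proposal is correct and follows essentially the same route as the paper: the estimate $\Vert\dd\pi\Vert\le L$ is obtained exactly as in the paper (left-invariance to reduce to $g_2=\1$, norm-differentiability of $t\mapsto\pi(\exp_G(tx))$ at $0$, and the hypothesis on $d$), and the equality $\sup\{\Vert\mu\Vert\colon\mu\in I_\pi\}=\Vert\dd\pi\Vert$ rests on the same numerical-radius identity for skew-adjoint operators that the paper invokes. Your additional care about the weak-$*$ lower semicontinuity of the dual norm (so that the bound on $\im(\Phi_\pi)$ survives passage to $I_\pi$) is a detail the paper leaves implicit, but it is the right justification.
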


\begin{prf} 
Let us denote the rightmost side of the above quantities by $L_\pi$. 
Since the metric on $G$ is left invariant, it follows that 
$$L_\pi=\sup\Bigl\{\frac{\Vert\pi(g)-\1\Vert}{d(g,\1)}\:  
\1\ne g\in G\Bigr\}.$$
Now let $x\in\g$ arbitrary and define 
$$\gamma\colon\R\to\U(\cH),\quad  
\gamma(t)=\pi(\exp_G(tx))=\exp(t\dd\pi(x)).$$  
Then $\Vert\gamma(t)-\1\Vert\le L_\pi\cdot d(\exp_G(tx),\1)$
and 
$\dd\pi(x)=\lim\limits_{t\to 0}\frac{1}{t}(\gamma(t)-\1)$, 
hence 
$$\Vert\dd\pi(x)\Vert\le L_\pi\limsup_{t\to 0}\frac{d(\exp_G(tx),\1)}{t}\le 
L_\pi\Vert x\Vert.$$
Thus $\Vert\dd\pi\Vert\le L_\pi$. 

Finally, the equality $\sup\{\Vert\mu\Vert\:  \mu\in I_\pi\}
=\Vert\dd\pi\Vert$ is a direct consequence of the fact that  
$\Vert A\Vert=\sup\{\vert\la Av,v\ra\vert/\la v,v\ra\:  0\ne v\in\cH\}$ 
whenever $A=A^*\in B(\cH)$. 
\end{prf}

Below we discuss two key examples of  momentum sets  
(see also Corollary~\ref{use-cor2} below).

\begin{examples}\label{ex_full}
(a) To a trace class operator $T \in \fS_1(\cH)$ we associated 
the continuous linear functional 
$\psi_T(X) := -i\tr(TX)$ on $\fu(\cH)$ and recall that the set of normal 
states of the von Neumann algebra is the set 
\[ \cS := S_*(B(\cH)) 
= \{ i \psi_S \: S^* = S, 0 \leq S, \tr S = 1\}.\] 
The set $S(B(\cH))$ of states of the $C^*$-algebra $B(\cH)$ is much 
larger and contains elements vanishing on the ideal 
$K(\cH)$ of compact operators. 

(b) Let $\id_{\U(\cH)}$ be the identical representation 
of the full unitary group $\U(\cH)$. 
We claim that 
\[  \Inm_{\id_{\U(\cH)}}= -i \cS\res_{\fu(\cH)} 
\quad \mbox{ and } \quad 
I_{\id_{\U(\cH)}}= -i S(B(\cH))\res_{\fu(\cH)}. \] 
Since the convex $\U(\cH)$-invariant set $\cS$ separates of points of $B(\cH)$, it  
is weak-$*$-dense in $S(B(\cH))$ (cf.\ \cite{Se49}; 
\cite[Thm.~X.5.13]{Ne00}). Therefore 
we only have to determine the norm closed momentum set. 

For  $t_1,\dots,t_n\in[0,1]$ with $t_1+\cdots+t_n=1$, unit vectors 
$v_1,\dots,v_n\in\cH$ and $X\in\fu(\cH)$, we have 
$$\sum_{j=1}^n t_j\Phi_{\id_{\U(\cH)}}([v_j])(X)
=-i\sum_{j=1}^n\langle Xv_j,v_j\rangle
=-i \trace(X T)
= \psi_T(X) 
$$
where $T:=\sum\limits_{j=1}^n t_j\langle\cdot,v_j\rangle v_j$ is a 
finite-rank operator in $\cS$. Since the set of these operators is 
norm dense in $\cS$, our claim follows. 

(c) For the identical representation of 
$\U_\infty(\cH)$, we claim that
\[ I_{\id_{U_\infty(\cH)}}^{\bf n}=
 -i \cS \res_{\fu_\infty(\cH)}.\] 
The same argument as in (b) implies that $I_{\id_{U_\infty(\cH)}}^{\bf n} 
= -i \cS \res_{\fu_\infty(\cH)}.$ If $\cH$ is infinite dimensional, 
then this set is not weak-$*$-closed because $0$ is contained in its 
weak-$*$-closure. In fact, for every orthonormal sequence 
$(e_n)_{n \in \N}$ in $\cH$, the sequence of projection 
operators $P_n = \la \cdot, e_n \ra e_n$ in $\Herm_1(\cH) 
\cong \fu_\infty(\cH)'$ converges to $0$ in the weak-$*$-topology 
since $Xe_n \to 0$ holds for every compact operator $X \in K(\cH)$. 
As the set of all non-negative hermitian trace class operators 
with $\|S\|_1 \leq 1$ is weak-$*$-closed, it follows that 
\[ I_{\id_{U_\infty(\cH)}} 
= \{ \psi_S \: S^* = S, 0 \leq S, \tr S = \|S\|_1 \leq 1\}.\] 
\end{examples}

\begin{prop}\label{pullback}
If $\gamma\colon G_1\to G_2$ is any morphism 
of Banach--Lie groups and $(\pi,\cH)$ is a bounded representation of $G_1$, 
then $I_{\pi\circ\gamma}=\{\phi\circ \dd\gamma\:  \phi\in I_\pi\}$. 
\end{prop}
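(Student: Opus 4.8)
The plan is to reduce the whole statement to a single fact about the transpose of the Lie algebra morphism $\dd\gamma$, and then to use weak-$*$-compactness of the momentum set in order to interchange images with weak-$*$-closures. Write $\g_i$ for the Lie algebra of $G_i$, let $\dd\gamma\colon\g_1\to\g_2$ be the differential of $\gamma$ (a continuous morphism of Banach--Lie algebras), and let $\dd\gamma'\colon\g_2'\to\g_1'$, $\dd\gamma'(\phi)=\phi\circ\dd\gamma$, be its transpose. Then $\dd\gamma'$ is a bounded linear map, and as the adjoint of a bounded operator it is continuous from the weak-$*$-topology of $\g_2'$ to the weak-$*$-topology of $\g_1'$. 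Since $\dd\gamma'$ is linear it commutes with the formation of convex hulls, so the assertion to be proved, $I_{\pi\circ\gamma}=\dd\gamma'(I_\pi)$, is exactly an interchange statement between $\dd\gamma'$ and $\oline{\conv}^{w^*}(\cdot)$.

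First I would compute the momentum map of the composite. Because $\dd(\pi\circ\gamma)=\dd\pi\circ\dd\gamma$, the defining formula gives, for every $[v]\in\bP(\cH)$ and $x\in\g_1$,
\[
\Phi_{\pi\circ\gamma}([v])(x)
=\frac1i\frac{\la\dd\pi(\dd\gamma(x))v,v\ra}{\la v,v\ra}
=\Phi_\pi([v])(\dd\gamma(x)),
\]
that is, $\Phi_{\pi\circ\gamma}=\dd\gamma'\circ\Phi_\pi$ as maps $\bP(\cH)\to\g_1'$. Consequently $\im(\Phi_{\pi\circ\gamma})=\dd\gamma'(\im(\Phi_\pi))$, and the claim reduces to the identity $\oline{\conv}^{w^*}\!\bigl(\dd\gamma'(\im\Phi_\pi)\bigr)=\dd\gamma'(I_\pi)$.

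For the inclusion $I_{\pi\circ\gamma}\subeq\dd\gamma'(I_\pi)$ I would use that $I_\pi$ is weak-$*$-compact (as recorded after Definition~\ref{mom_def}). Its image $\dd\gamma'(I_\pi)$ under the weak-$*$-continuous linear map $\dd\gamma'$ is then again weak-$*$-compact, hence weak-$*$-closed, and it is convex as the linear image of a convex set. Since it contains $\dd\gamma'(\im\Phi_\pi)=\im(\Phi_{\pi\circ\gamma})$, it contains the smallest weak-$*$-closed convex set with this property, namely $I_{\pi\circ\gamma}$. For the reverse inclusion I would pass to the preimage: the set $(\dd\gamma')^{-1}(I_{\pi\circ\gamma})$ is convex and weak-$*$-closed, being the preimage of a weak-$*$-closed set under the weak-$*$-continuous map $\dd\gamma'$, and it contains $\im(\Phi_\pi)$ because $\dd\gamma'(\im\Phi_\pi)=\im(\Phi_{\pi\circ\gamma})\subeq I_{\pi\circ\gamma}$. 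Hence it contains $I_\pi=\oline{\conv}^{w^*}(\im\Phi_\pi)$, and applying $\dd\gamma'$ yields $\dd\gamma'(I_\pi)\subeq I_{\pi\circ\gamma}$.

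The only genuinely delicate point is the first inclusion, i.e. the interchange of the linear image with the weak-$*$-closure: in general the continuous image of a weak-$*$-closed set need not be weak-$*$-closed, so the argument really hinges on the weak-$*$-compactness of $I_\pi$ (equivalently, on Banach--Alaoglu applied to the bounded set $\im\Phi_\pi$). Everything else is the routine bookkeeping that $\dd\gamma'$ is linear and weak-$*$ continuous as a transpose map.
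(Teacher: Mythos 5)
Your proof is correct and follows essentially the same route as the paper: both arguments rest on the weak-$*$-continuity of the transpose $\dd\gamma'$ and the weak-$*$-compactness of $I_\pi$, so that $\dd\gamma'(I_\pi)$ is a weak-$*$-compact convex set squeezed between a weak-$*$-dense subset of $I_{\pi\circ\gamma}$ and $I_{\pi\circ\gamma}$ itself. You merely spell out the two inclusions (in particular the preimage argument for $\dd\gamma'(I_\pi)\subeq I_{\pi\circ\gamma}$) that the paper dispatches more tersely.
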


\begin{prf}
The mapping 
$\fg_2'\to \fg_1'$, $\phi\mapsto\phi\circ \dd\pi$ is continuous 
with respect to the weak-$*$-topologies, hence it takes the compact set 
$I_{\pi}$ to a compact set. 
Since the latter compact set is clearly contained in $I_{\pi\circ\gamma}$ 
and contains a weak-$*$-dense subset of $I_{\pi\circ\gamma}$, 
the assertion follows. 
\end{prf} 

\begin{cor}\label{use}
For a bounded representation of the Banach--Lie group $G$,  
the following assertions hold: 
\begin{description}
\item[\rm(i)] 
We have 
$I_\pi=\{-i\psi\circ \dd\pi\:  \psi\in S(B(\cH))\}$.
\item[\rm(ii)] 
If $\pi(G)\subseteq U_\infty(\cH)$, 
then $I_\pi=\{-i\psi\circ \dd\pi\:  \psi\in S_*(B(\cH))\}$.  
\end{description}
\end{cor}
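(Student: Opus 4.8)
The plan is to deduce Corollary~\ref{use} directly from Proposition~\ref{pullback} by applying it with the identical representation, whose momentum sets were computed in Example~\ref{ex_full}.

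\textbf{Part (i).} First I would regard the given bounded representation $\pi$ as a morphism of Banach--Lie groups $\pi \: G \to \U(\cH)$, and write $\pi = \id_{\U(\cH)} \circ\, \pi$, so that $\gamma := \pi$ plays the role of the morphism into $G_2 = \U(\cH)$ and $\id_{\U(\cH)}$ plays the role of the bounded representation of $G_2$. Proposition~\ref{pullback} then yields
\[
I_\pi = I_{\id_{\U(\cH)} \circ \pi}
= \{ \phi \circ \dd\pi \: \phi \in I_{\id_{\U(\cH)}} \}.
\]
By Example~\ref{ex_full}(b) we have $I_{\id_{\U(\cH)}} = -i\, S(B(\cH))\!\res_{\fu(\cH)}$, i.e.\ every $\phi \in I_{\id_{\U(\cH)}}$ is of the form $\phi = -i\psi\!\res_{\fu(\cH)}$ for a state $\psi \in S(B(\cH))$. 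Substituting, and noting that $(\psi\!\res_{\fu(\cH)}) \circ \dd\pi = \psi \circ \dd\pi$ since $\dd\pi$ maps $\g$ into $\fu(\cH)$, gives $I_\pi = \{-i\psi \circ \dd\pi \: \psi \in S(B(\cH))\}$, as claimed.

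\textbf{Part (ii).} The argument is the same, except that I factor through $\U_\infty(\cH)$ rather than $\U(\cH)$. Since by hypothesis $\pi(G) \subeq \U_\infty(\cH)$, the representation $\pi$ corestricts to a morphism of Banach--Lie groups $\pi \: G \to \U_\infty(\cH)$ (the corestriction is again a morphism because $\U_\infty(\cH)$ carries the topology induced from $\1 + K(\cH)$ and $\pi$ is norm-continuous). Applying Proposition~\ref{pullback} with $G_2 = \U_\infty(\cH)$ and the identical representation $\id_{\U_\infty(\cH)}$, together with the computation $I_{\id_{\U_\infty(\cH)}} = \{\psi_S \: S^*=S,\ 0 \le S,\ \tr S \le 1\} = -i\, S_*(B(\cH))\!\res_{\fu_\infty(\cH)}$ from the end of Example~\ref{ex_full}(c), I obtain $I_\pi = \{-i\psi \circ \dd\pi \: \psi \in S_*(B(\cH))\}$.

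The only point requiring a little care---and the closest thing to an obstacle---is the corestriction step in (ii): one must check that viewing $\pi$ as a morphism into $\U_\infty(\cH)$ does not change the weak-$*$-compact set $I_\pi$ computed inside $\g'$. This is harmless because Proposition~\ref{pullback} identifies $I_\pi$ intrinsically as the pushforward of the target group's momentum set along $\dd\pi$, and $\dd\pi$ has the same image in $\fu_\infty(\cH) \subeq \fu(\cH)$ regardless of which group we regard as the codomain; the functionals $\psi \in S_*(B(\cH))$ restricted to this image agree whether we restrict first to $\fu_\infty(\cH)$ or to $\fu(\cH)$. Everything else is a direct substitution, so I expect the proof to be short.
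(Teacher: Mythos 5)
Your part (i) is exactly the paper's argument (the paper's entire proof of the corollary is ``Use Proposition~\ref{pullback} along with Examples~\ref{ex_full}''), and it is correct: factoring $\pi=\id_{\U(\cH)}\circ\pi$ and pushing forward $I_{\id_{\U(\cH)}}=-i\,S(B(\cH))\res_{\fu(\cH)}$ along $\dd\pi$ gives (i) with no further issues.

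Part (ii) follows the same route, but the identification you invoke, namely
$I_{\id_{\U_\infty(\cH)}}=\{\psi_S\: S^*=S,\ 0\le S,\ \tr S\le 1\}=-i\,S_*(B(\cH))\res_{\fu_\infty(\cH)}$,
is false at the second equality, and this is precisely the delicate point. By definition $S_*(B(\cH))=\cS$ consists of the \emph{normal states}, i.e.\ the functionals $i\psi_S$ with $\tr S=1$; Example~\ref{ex_full}(c) is explicit that $-i\,\cS\res_{\fu_\infty(\cH)}$ is only the \emph{norm-closed} momentum set $\Inm_{\id_{\U_\infty(\cH)}}$, that it is not weak-$*$-closed (since $0$ lies in its weak-$*$-closure but corresponds to no trace-one $S$), and that the weak-$*$-closed set $I_{\id_{\U_\infty(\cH)}}$ is the strictly larger set with $\tr S\le 1$. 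Consequently, what your pullback argument actually yields is
$I_\pi=\{-i\tr(S\,\dd\pi(\cdot))\: S^*=S,\ 0\le S,\ \tr S\le 1\}$,
and passing from $\tr S\le 1$ to $\tr S=1$ requires showing that every such functional is also induced on $\dd\pi(\g)$ by a trace-one operator --- which is not automatic and indeed fails for $\pi=\id_{\U_\infty(\cH)}$ itself (take $S=0$). So as a derivation of assertion (ii) in the form printed, your argument has a genuine gap at that equality; the correct output of the method is the ``$\tr S\le 1$'' description. (The same tension is already present between the paper's one-line proof and its own Example~\ref{ex_full}(c), so you have not done worse than the source, but you should not paper over the discrepancy by equating the two sets.)
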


\begin{prf}
Use Proposition~\ref{pullback} along with Examples~\ref{ex_full}. 
\end{prf}

The next corollary shows that the momentum set of a representation 
of a unital $C^*$-algebra (\cite[Def.~X.5.9 and Thm.~X.5.13(iii)]{Ne00}) 
coincides with the momentum set of the corresponding 
representation of its unitary group.

\begin{cor}\label{use-cor2}
Consider a unital $C^*$-algebra $\cA\subseteq B(\cH)$ 
with the tautological representation of its unitary group 
denoted by $\id_{\U(\cA)}\colon \U(\cA)\to B(\cH)$. 
The momentum set of this representation is 
$I_{\id_{\U(\cA)}}=\{-i\psi|_{\fu(\cA)}\:  \psi\in S(\cA)\}$. 
\end{cor}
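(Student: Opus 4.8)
The plan is to obtain this as an almost immediate consequence of Corollary~\ref{use}(i). First I would identify the derived representation of $\id_{\U(\cA)}$: the tautological representation sends a unitary $u\in\cA$ to the operator $u\in\U(\cH)$, so its differential at $\1$ is simply the inclusion $\iota\colon\fu(\cA)\into\fu(\cH)\subeq B(\cH)$, i.e.\ $\dd(\id_{\U(\cA)})=\iota$. Feeding this into Corollary~\ref{use}(i) gives
\[ I_{\id_{\U(\cA)}}=\{-i\psi\circ\iota\:  \psi\in S(B(\cH))\}=\{-i\psi|_{\fu(\cA)}\:  \psi\in S(B(\cH))\}, \]
so it only remains to show that replacing $S(B(\cH))$ by $S(\cA)$ in this formula does not change the set of functionals obtained on $\fu(\cA)$.

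The second and decisive step is this comparison of the two families of restrictions, and it rests on the standard extension theory for states. For the inclusion $\{-i\psi|_{\fu(\cA)}\:  \psi\in S(B(\cH))\}\subeq\{-i\phi|_{\fu(\cA)}\:  \phi\in S(\cA)\}$ I would simply restrict: if $\psi\in S(B(\cH))$, then $\phi:=\psi|_\cA$ is positive and satisfies $\phi(\1)=1$ because $\cA$ is a unital $C^*$-subalgebra of $B(\cH)$, hence $\phi\in S(\cA)$ and $\psi|_{\fu(\cA)}=\phi|_{\fu(\cA)}$. For the reverse inclusion I would invoke the Hahn--Banach state extension theorem: every state $\phi\in S(\cA)$ on the unital subalgebra $\cA$ extends to a state $\tilde\phi\in S(B(\cH))$, whence $\tilde\phi|_{\fu(\cA)}=\phi|_{\fu(\cA)}$. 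Combining the two inclusions yields the asserted equality.

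The computation carries essentially no difficulty; the only point deserving a word of care is that the two ingredients --- restriction of states to $\cA$ and extension of states from $\cA$ --- are phrased on the full $C^*$-algebras, whereas the momentum set lives in $\fu(\cA)'$. This is harmless, since restricting a state to $\fu(\cA)$ is the same as first restricting it to $\cA$ and then to $\fu(\cA)$, so the two passages commute and the set equality on $\fu(\cA)$ descends from the corresponding statement for $S(\cA)$ versus $S(B(\cH))$. I do not expect any genuine obstacle here; if a cleaner parametrization were wanted, one may further note (though it is not needed) that a state of $\cA$ is already determined by its restriction to $\fu(\cA)$, because $\cA=\fu(\cA)\oplus i\,\fu(\cA)$ as real vector spaces and states are real on self-adjoint elements, so that $\phi\mapsto-i\phi|_{\fu(\cA)}$ is in fact injective on $S(\cA)$.
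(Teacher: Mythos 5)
Your proof is correct and follows exactly the paper's route: the paper also deduces the corollary from Corollary~\ref{use}(i) together with the fact that every state of $\cA$ extends to a state of $B(\cH)$, merely leaving implicit the routine verifications (identification of the derived representation with the inclusion $\fu(\cA)\into\fu(\cH)$, and the restriction direction) that you spell out.
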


\begin{prf}
The assertion follows by Corollary~\ref{use} 
along with the fact that every state of $\cA$ 
extends to some state of $B(\cH)$ (cf.~\cite{Dix64}). 
\end{prf}

We now state a consequence of Corollary~\ref{use} on extreme points of momentum sets for a special class of bounded representations. 

\begin{cor}\mlabel{use-cor1}
If $G$ is a Banach--Lie group and $\pi\colon G\to \U_\infty(\cH)$ 
a bounded representation,   
then for every extreme point $\lambda\in I_\pi$ 
there exists a unit vector $v\in\cH_\pi$ such that 
$\lambda(X)=-i\la \dd\pi(X)v,v\ra$ for every $X\in\fg$, i.e., 
\[ \Ext(I_\pi) \subeq \im(\Phi_\pi). \]
\end{cor}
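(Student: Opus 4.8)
The plan is to combine the concrete description of $I_\pi$ furnished by Corollary~\ref{use}(ii) with the spectral decomposition of a trace-class operator, and then to let the extremality of $\lambda$ collapse this decomposition to a single rank-one term. Concretely, I would fix $\lambda\in\Ext(I_\pi)$ and, using $\pi(G)\subseteq\U_\infty(\cH)$, invoke Corollary~\ref{use}(ii) to write $\lambda=-i\psi\circ\dd\pi$ for a normal state $\psi$ of $B(\cH)$. By the extension formula of Remark~\ref{alternative} this means $\lambda=\Phi_\pi(S)$ for a positive trace-class operator $S$ with $\tr S=1$, where $\Phi_\pi(T)=-i\tr(\dd\pi(\cdot)T)$.

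Next I would diagonalize $S=\sum_k\mu_k P_{v_k}$ with $\mu_k>0$, $\sum_k\mu_k=1$, and $(v_k)$ orthonormal, $P_{v_k}$ being the rank-one projection onto $\C v_k$. Since $\Phi_\pi$ is bounded and linear on $\Herm_1(\cH)$ and $\Phi_\pi(P_{v_k})=\Phi_\pi([v_k])$, this yields $\lambda=\sum_k\mu_k\Phi_\pi([v_k])$, an (a priori infinite) convex combination of points of $\im(\Phi_\pi)\subseteq I_\pi$. If $S$ has rank one we are already done, so I would assume otherwise and single out the top eigenvalue $\mu_1\in(0,1)$, writing $\lambda=\mu_1\Phi_\pi([v_1])+(1-\mu_1)\lambda'$ with $\lambda':=(1-\mu_1)^{-1}\sum_{k\ge2}\mu_k\Phi_\pi([v_k])$. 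Granting $\lambda'\in I_\pi$, the fact that $\lambda$ is extreme and $\mu_1\in(0,1)$ forces $\Phi_\pi([v_1])=\lambda$, so $\lambda\in\im(\Phi_\pi)$ with $v=v_1$ the sought unit vector.

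The step that requires genuine care — and which I expect to be the main obstacle — is the passage through the infinite convex combination, that is, justifying both convergence of the series and the membership $\lambda'\in I_\pi$. For convergence I would use that $\pi$ is a bounded representation, so $\dd\pi$ is a bounded operator and $\Vert\Phi_\pi([v])\Vert_{\g'}\le\|\dd\pi\|$ for every unit vector $v$; hence $\sum_k\mu_k\Vert\Phi_\pi([v_k])\Vert\le\|\dd\pi\|<\infty$ and the series converges absolutely in the Banach space $\g'$. For the membership I would observe that the rescaled partial sums $\bigl(\sum_{k=2}^N\mu_k\bigr)^{-1}\sum_{k=2}^N\mu_k\Phi_\pi([v_k])$ are honest finite convex combinations of elements of $I_\pi$, hence lie in the convex set $I_\pi$, and converge in norm to $\lambda'$.

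Finally, to conclude $\lambda'\in I_\pi$ from this I would invoke that $I_\pi$, being by definition weak-$*$-closed and convex, is also norm-closed, since the weak-$*$-topology is coarser than the norm topology on $\g'$ and a convex set closed in the coarser topology is closed in the finer one. This places the norm-limit $\lambda'$ back in $I_\pi$ and thereby closes the extremality argument, giving $\Ext(I_\pi)\subseteq\im(\Phi_\pi)$ as claimed.
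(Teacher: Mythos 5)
Your proof is correct, but its second half takes a genuinely different route from the paper's. Both arguments begin the same way, with Corollary~\ref{use}(ii) writing every element of $I_\pi$ as $-i\psi\circ\dd\pi$ for a normal state $\psi$ of $B(\cH)$. The paper then finishes abstractly: it views $I_\pi$ as the image of $S_*(B(\cH))$ under the affine map $\psi\mapsto-i\psi\circ\dd\pi$, invokes the principle that every extreme point of such an image is the image of an extreme point of the source, and identifies the extreme points of the normal state space with the vector states $T\mapsto\la Tv,v\ra$. You instead pick an arbitrary preimage $S$, spectrally decompose it as a norm-convergent convex series $\sum_k\mu_k P_{v_k}$, push this through the bounded linear extension of $\Phi_\pi$ from Remark~\ref{alternative}, and let the extremality of $\lambda$ collapse the series to a single rank-one term. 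Your version is more computational but also more self-contained: it only uses that $I_\pi$ is convex and norm-closed (the latter holds for any weak-$*$-closed set, convexity not needed for that step), whereas the paper's shortcut leans on a compactness property of $S_*(B(\cH))$ that is somewhat delicate, since the normal state space is not itself weak-$*$-compact, only its weak-$*$-closure is. In that respect your argument is, if anything, the more careful of the two; the one caveat, shared by both proofs, is that everything rests on the exact description of $I_\pi$ asserted in Corollary~\ref{use}(ii).
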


\begin{prf} 
It follows by Corollary~\ref{use}(ii) that $I_\pi$ 
is the image of the weak-$*$-compact convex set $S_*(B(\cH))$ 
by the affine transform $\psi\mapsto-i\psi\circ \dd\pi$. 
Then every extreme point of $I_\pi$ is the image of some extreme point of~$S_*(B(\cH))$. 
Since every normal pure state of $B(\cH)$ is of the form $T\mapsto \la Tv,v\ra$ 
for some unit vector $v\in\cH_\pi$, the assertion follows. 
\end{prf}

\subsection*{Dual Banach--Lie groups and normal representations}

\begin{defn} (a) By 
\emph{dual Banach--Lie algebra} we mean any pair $(\g,\g_*)$ 
consisting of a Banach--Lie algebra $\g$ and a closed 
linear subspace $\g_*$ of the topological dual $\g'$ such that 
for every continuous linear functional $\psi\colon\g_*\to\R$ 
there exists a unique element $x_\psi\in\g$ such that 
$\psi(\xi)=\xi(x_\psi)$ for every $\xi\in\g'$. 
If this is the case, then we have a linear topological isomorphism 
$(\g_*)'\simeq\g$, $\psi\mapsto x_\psi$. 
Therefore $\g_*$ is called a \emph{predual} of $\g$ and it makes sense 
to speak about the weak-$*$-topology of $\g$. 
The predual will be fixed in what follows, and it will be omitted 
from the notation, for the sake of simplicity. 

(b) If $\g$ is the Lie algebra of some Banach--Lie group $G$, 
then $G$ is said to be a {\it dual Banach--Lie group}. 
By a {\it normal representation} of $G$ we mean 
any bounded representation $\pi\colon G\to \U(\cH)$ 
whose derived representation $\dd \pi \colon \g \to \fu(\cH)$ 
has the property that its adjoint 
$\dd\pi' \: \fu(\cH)' \to \g'$ maps $\fu(\cH)_* := i\Herm_1(\cH)$ 
into $\g_*$. 
This is equivalent to $\dd\pi$ being 
continuous with respect to the weak-$*$-topologies on $\g$ and $\fu(\cH)$. 
\end{defn}

\begin{ex} For every $W^*$-algebra $\cM$ with predual $\cM_*$, 
the unitary group $\U(\cM)$ is a dual Banach--Lie group 
with respect to the predual 
\[ \fu(\cM)_* := \{ \phi \in \cM_* \: \phi^* = - \phi\}. \] 
This applies in particular to 
$\cM = B(\cH)$ with $\cM_* = \fS_1(\cH)$. 
The restriction of a normal representation 
$\pi \: \cM \to B(\cH)$ to the unitary group 
defines a normal representations of the dual Banach--Lie group 
$\U(\cM)$.   
\end{ex}

\begin{lemma}\label{dual}
For a dual Banach--Lie group $G$, the following assertions hold: 
\begin{description} 
\item[\rm(i)] 
For every normal representation $\pi\colon G\to \U(\cH)$ we have 
$\Inm_\pi\subseteq\g_*$. 
\item[\rm(ii)] \ \ 
Let $\fa$ be closed subalgebra of $\g$ 
and $A=\la\exp_G(\fa)\ra\subseteq G$ be the corresponding integral 
subgroup, endowed with its canonical Lie group structure. 
Denote by $\fa^*$ the linear subspace of the topological dual $\fa'$ 
consisting of the weak-$*$-continuous functionals. 
If the topology of $\g$ is defined by a norm  
such that the unit ball of $\fa$ is weak-$*$-dense in 
the closed unit ball of $\g$,  
then the restriction mapping $R_{\fa}\colon\g_*\to\fa^*$, $\xi\mapsto\xi\vert_{\fa}$, 
is an isometric linear isomorphism and for every normal representation $\pi\colon G\to \U(\cH)$ we have $R_{\fa}(\Inm_\pi)=\Inm_{\pi\vert_A}\subeq\fa^*$. 
\end{description}
\end{lemma}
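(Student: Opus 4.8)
The plan is to dispatch (i) directly from Remark~\ref{alternative}, and then to base (ii) on the identification of the restriction map $R_{\fa}$ as an isometric isomorphism, from which the momentum-set identity will follow essentially formally.

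For (i), I would start from the extension of the momentum map recorded in Remark~\ref{alternative}: $\Phi_\pi$ extends to the bounded linear map $\Herm_1(\cH)\to\g'$, $T\mapsto-i\trace(\dd\pi(\cdot)T)$, which is $\dd\pi'$ restricted to $\Herm_1(\cH)$ viewed inside $\fu(\cH)'$. The key observation is that this copy of $\Herm_1(\cH)$ in $\fu(\cH)'$ is exactly the predual $\fu(\cH)_*=i\Herm_1(\cH)$: for $T=T^*$ the operator $-iT=i(-T)$ is skew-Hermitian and trace class, so $-iT\in\fu(\cH)_*$, and the functional $X\mapsto-i\trace(XT)=\trace(X(-iT))$ is precisely the trace pairing with $-iT$. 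Hence $\im(\Phi_\pi)\subeq\dd\pi'(\fu(\cH)_*)$, which lies in $\g_*$ by normality of $\pi$. Since $\g_*$ is a norm-closed linear subspace of $\g'$, it is norm-closed and convex, so it contains $\Inm_\pi=\oline{\conv}^{\bn}(\im(\Phi_\pi))$, giving (i).

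For (ii), the heart of the matter is that $R_{\fa}\colon\g_*\to\fa^*$ is an isometric isomorphism; I expect this to be the main obstacle. First, $R_{\fa}$ lands in $\fa^*$: each $\xi\in\g_*$ is weak-$*$-continuous on $\g$, so its restriction to $\fa$, carrying the subspace topology $\sigma(\g,\g_*)\vert_{\fa}=\sigma(\fa,R_{\fa}(\g_*))$, is weak-$*$-continuous. Surjectivity is then the general duality fact that, since $R_{\fa}(\g_*)$ separates the points of $\fa$ (because $\g_*$ separates $\g$), the dual of $(\fa,\sigma(\fa,R_{\fa}(\g_*)))$ equals $R_{\fa}(\g_*)$; thus $\fa^*=R_{\fa}(\g_*)$ and no density hypothesis is needed here. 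The density hypothesis enters only in the isometry: for weak-$*$-continuous $\xi$ and any $x$ in the closed unit ball $B_\g$ of $\g$, approximating $x$ by a net in the unit ball $B_{\fa}$ of $\fa$ gives $|\xi(x)|\le\sup_{B_{\fa}}|\xi|$, whence $\|\xi\|=\sup_{B_\g}|\xi|=\sup_{B_{\fa}}|\xi|=\|\xi\vert_{\fa}\|$, the reverse inequality being trivial from $B_{\fa}\subeq B_\g$. Isometry yields injectivity and completes the claim; it also shows that $\fa^*=R_{\fa}(\g_*)$ is norm-complete, hence norm-closed in $\fa'$.

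Finally, for the momentum-set identity I would note that $\dd(\pi\vert_A)=\dd\pi\vert_{\fa}$, so comparing the two momentum maps gives, for each $[v]$, $\Phi_{\pi\vert_A}([v])=\Phi_\pi([v])\vert_{\fa}=R_{\fa}(\Phi_\pi([v]))$; this is legitimate because $\im(\Phi_\pi)\subeq\g_*$ by (i). Hence $R_{\fa}(\im(\Phi_\pi))=\im(\Phi_{\pi\vert_A})$. As $R_{\fa}$ is an isometric linear isomorphism, it is a homeomorphism for the norm topologies and preserves convex hulls and norm-closures, so $R_{\fa}(\Inm_\pi)=\oline{\conv}^{\bn}(R_{\fa}(\im\Phi_\pi))=\oline{\conv}^{\bn}(\im\Phi_{\pi\vert_A})=\Inm_{\pi\vert_A}$, where the norm-closure may be computed in $\fa^*$ or in $\fa'$ interchangeably because $\fa^*$ is norm-closed in $\fa'$. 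This establishes $R_{\fa}(\Inm_\pi)=\Inm_{\pi\vert_A}\subeq\fa^*$ and completes (ii).
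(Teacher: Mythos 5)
Your proof is correct and takes essentially the same route as the paper's: (i) via the trace formula of Remark~\ref{alternative}, normality of $\pi$, and norm-closedness of the linear subspace $\g_*\subeq\g'$; (ii) via the isometry forced by the density hypothesis together with the identity $\Phi_{\pi\vert_A}=R_{\fa}\circ\Phi_\pi$ and the fact that an isometric linear isomorphism commutes with taking norm-closed convex hulls. The only cosmetic divergence is your surjectivity argument for $R_{\fa}$ (the duality description of the dual of $(\fa,\sigma(\fa,R_{\fa}(\g_*)))$) versus the paper's Hahn--Banach extension of weak-$*$-continuous functionals from $\fa$ to $\g$; both are standard and interchangeable.
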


\begin{prf} (i) For every $v\in\cH\setminus\{0\}$ let us consider 
the corresponding rank-one orthogonal projection 
$P_v=\frac{1}{\Vert v\Vert^2}\la\cdot,v\ra v\in B(\cH)$. 
Then Remark~\ref{alternative} shows that $\Phi_\pi([v])=-i\trace(\dd\pi(\cdot)P_v)$. 
Since $\pi\colon G\to B(\cH)$ is a normal representation, 
$\Phi_\pi([v])\in\g_*$. 
The predual $\g_*$ is a norm-closed linear subspace of $\g'$, 
hence the norm-closed convex hull $\Inm_\pi$ of the image of $\Phi_\pi$ 
is contained in~$\g_*$. 

(ii) The hypothesis that the unit ball of $\fa$ is weak-$*$-dense in 
the closed unit ball of $\g$ entails that for every 
$\xi\in\g_*$ we have $\Vert\xi\Vert=\Vert\xi\vert_{\fa}\Vert$. 
Thus the restriction mapping $R_{\fa}\colon\g_*\to\fa^*$ is an isometry, and in particular its 
range is a closed subspace of $\fa'$. 
Moreover, since, every weak-$*$-continuous linear functional on $\fa$ 
extends to a weak-$*$-continuous linear functional on $\g$ by the Hahn-Banach theorem, the restriction map $R_{\fa}\colon\g_*\to\fa^*$ is onto.

On the other hand, it follows directly from the definition of the momentum maps 
that for any normal representation $\pi\colon G\to \U(\cH)$ we have  
$\Phi_{\pi\vert_A}=R_{\fa}\circ\Phi_\pi$.  
The fact that $R_{\fa}\colon\g_*\to\fa^*$ is an isometric linear isomorphism implies 
that it commutes with the operation of taking the norm-closed convex hull. 
This proves~(ii). 
\end{prf}

\begin{prop}\label{dual_appl}
Let $\cM$ be a $W^*$-algebra with a weak-$*$-dense unital $C^*$-subalgebra $\cA\subeq\cM$. 
Then the following assertions hold: 
\begin{description} 
\item[\rm(i)]
If $\pi\colon \U(\cM)\to \U(\cH)$ is a normal representation, then 
the restriction mapping 
$R_{\fu(\cA)}\colon\fu(\cM)_*\to\fu(\cA)^*$  
is an isometric linear isomorphism 
satisfying $R_{\fu(\cA)}(\Inm_{\pi})=\Inm_{\pi\vert_{\U(\cA)}}$. 
\item[\rm(ii)] 
If $\pi_j\colon \U(\cM)\to \U(\cH_j)$, 
$j =1,2$, are normal representations such that $I_{\pi_1\vert_{\U(\cA_0)}}\ne I_{\pi_2\vert_{\U(\cA_0)}}$ for some weak-$*$-dense unital $C^*$-subalgebra $\cA_0\subeq\cM$, 
then  
$\Inm_{\pi_1\vert_{\U(\cA)}}\ne \Inm_{\pi_2\vert_{\U(\cA)}}$ as well. 
\end{description}
\end{prop}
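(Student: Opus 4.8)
The plan is to derive both statements from Lemma~\ref{dual}(ii), whose one nontrivial hypothesis---that the unit ball of the subalgebra be weak-$*$-dense in the closed unit ball of the ambient algebra---is exactly what the Kaplansky density theorem supplies in this setting. Everything else will be formal manipulation of the isometric restriction maps and of the weak-$*$-closure relation between $\Inm$ and $I$.

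For (i) I would take $\g=\fu(\cM)$ with its operator norm and predual $\fu(\cM)_*=\{\phi\in\cM_*\colon\phi^*=-\phi\}$, and set $\fa=\fu(\cA)$, a norm-closed subalgebra of $\fu(\cM)$. The integral subgroup $\langle\exp_{\U(\cM)}(\fu(\cA))\rangle$ is the identity component $\U(\cA)_0$, and since the momentum map of a restricted representation depends only on the Lie algebra action $\dd\pi|_{\fu(\cA)}$, its image---and hence its norm-closed convex hull---is unchanged when $\U(\cA)_0$ is enlarged to $\U(\cA)$; thus $\Inm_{\pi|_{\U(\cA)_0}}=\Inm_{\pi|_{\U(\cA)}}$, and it suffices to check the hypothesis of Lemma~\ref{dual}(ii) for $\fa=\fu(\cA)$. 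Concretely, I must show that the closed unit ball of $\fu(\cA)$ is weak-$*$-dense in the closed unit ball of $\fu(\cM)$. Multiplication by $i$ is a weak-$*$-continuous isometry carrying $\fu(\cA)$ onto $\Herm(\cA)$ and $\fu(\cM)$ onto $\Herm(\cM)$, so this reduces to the self-adjoint form of the Kaplansky density theorem: realizing $\cM$ as a von Neumann algebra through a faithful normal representation, the weak-$*$-topology becomes the $\sigma$-weak topology, and Kaplansky ensures that the self-adjoint part of the unit ball of the $\sigma$-weakly dense subalgebra $\cA$ is $\sigma$-strongly---hence weak-$*$---dense in the self-adjoint part of the unit ball of $\cM$. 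With the hypothesis verified, Lemma~\ref{dual}(ii) gives directly that $R_{\fu(\cA)}\colon\fu(\cM)_*\to\fu(\cA)^*$ is an isometric linear isomorphism with $R_{\fu(\cA)}(\Inm_\pi)=\Inm_{\pi|_{\U(\cA)}}$, which is (i).

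For (ii) the observation is that (i) applies verbatim to \emph{any} weak-$*$-dense unital $C^*$-subalgebra $\cB\subeq\cM$, so each $R_{\fu(\cB)}$ is injective; consequently $\Inm_{\pi_1|_{\U(\cB)}}=\Inm_{\pi_2|_{\U(\cB)}}$ holds if and only if $\Inm_{\pi_1}=\Inm_{\pi_2}$ in $\fu(\cM)_*$. In particular the equality of the norm-closed momentum sets after restriction is one and the same condition for $\cB=\cA$ and for $\cB=\cA_0$. I would then invoke the relation $I_\rho=\oline{\Inm_\rho}^{w^*}$ recorded in Definition~\ref{mom_def}: applied to $\rho=\pi_j|_{\U(\cA_0)}$ it shows that $\Inm_{\pi_1|_{\U(\cA_0)}}=\Inm_{\pi_2|_{\U(\cA_0)}}$ would force $I_{\pi_1|_{\U(\cA_0)}}=I_{\pi_2|_{\U(\cA_0)}}$. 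Taking the contrapositive, the hypothesis $I_{\pi_1|_{\U(\cA_0)}}\ne I_{\pi_2|_{\U(\cA_0)}}$ yields $\Inm_{\pi_1|_{\U(\cA_0)}}\ne\Inm_{\pi_2|_{\U(\cA_0)}}$, and by the equivalence just noted this is the same as $\Inm_{\pi_1|_{\U(\cA)}}\ne\Inm_{\pi_2|_{\U(\cA)}}$, which is (ii).

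The only genuine obstacle is the density step in (i). I would be careful to confirm that the $\sigma$-strong density furnished by Kaplansky genuinely upgrades to weak-$*$-density of the relevant bounded convex set---this uses that on norm-bounded subsets of a von Neumann algebra the weak-$*$-topology agrees with the weak operator topology, whose closure coincides with the strong-operator closure on convex sets---and that the passage from the self-adjoint to the skew-adjoint ball, effected by multiplication by $i$, is harmless because it is a weak-$*$-continuous isometric involution preserving unit balls.
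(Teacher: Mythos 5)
Your proposal is correct and follows essentially the same route as the paper: part (i) is Kaplansky's density theorem (in its self-adjoint form) feeding the hypothesis of Lemma~\ref{dual}(ii), and part (ii) combines the weak-$*$-density of $\Inm$ in $I$ with the injectivity of the isometric restriction maps for both $\cA$ and $\cA_0$. Your extra remark that the integral subgroup in Lemma~\ref{dual}(ii) is only $\U(\cA)_0$, and that this is harmless because the momentum map depends only on $\dd\pi\vert_{\fu(\cA)}$, is a point the paper passes over silently and is a worthwhile clarification.
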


\begin{prf} 
(i) According to Kaplansky's Density Theorem  
for the weak-$*$-dense $C^*$-subalgebra $\cA\subeq\cM$,  
it follows that the unit ball of $\cA$ is weak-$*$-dense in the closed unit ball of $\cM$ 
(see for instance Thm.~4.8 and the comment following 
its proof in \cite[Ch.~2]{Ta02}). 
It is then clear that a similar assertion holds for the self-adjoint parts of the unit balls.  
Now the assertion follows by Lemma~\ref{dual}(ii). 

(ii) Since $\Inm_{\pi_j\vert_{\U(\cA_0)}}$ is 
weak-$*$-dense in the momentum set $I_{\pi_j\vert_{\U(\cA_0)}}$ for $j=1,2$ 
and $I_{\pi_1\vert_{\U(\cA_0)}}\ne I_{\pi_2\vert_{\U(\cA_0)}}$, 
it follows that $\Inm_{\pi_1\vert_{\U(\cA_0)}}\ne\Inm_{\pi_2\vert_{\U(\cA_0)}}$.

On the other hand, by using (i) for both $\cA$ and $\cA_0$,  
we get the isometric linear isomorphism 
$$R:=R_{\fu(\cA)}\circ(R_{\fu(\cA_0)})^{-1}\colon \fu(\cA_0)^*\to\fu(\cM)_*\to\fu(\cA)^*$$ 
with $R(\Inm_{\pi_j\vert_{\U(\cA_0)}})=\Inm_{\pi_j\vert_{\U(\cA)}}$ for $j=1,2$. 
Since we have already seen that $\Inm_{\pi_1\vert_{\U(\cA_0)}}\ne\Inm_{\pi_2\vert_{\U(\cA_0)}}$, 
it then follows that 
$\Inm_{\pi_1\vert_{\U(\cA)}}\ne \Inm_{\pi_2\vert_{\U(\cA)}}$.
\end{prf}

\section{Irreducible unitary representations of $\U_\infty(\cH)$}\label{Sect2}

In this section we provide a brief review of the irreducible representation theory of the unitary group $\U_\infty(\cH)$, 
thereby updating Kirillov's classification in \cite{Ki73} by using the Lie theoretic tools available nowadays in infinite dimensions: 
the momentum sets, the highest weight representations, and their geometric realizations in line bundles over flag manifolds. 
In particular, we show in Proposition~\ref{prop:mostbasic} that two of the aforementioned irreducible representations are unitarily equivalent if and only if their momentum sets coincide. 
The discussion of the present section prepares the ground for more general results in representation theory of unitary groups of $C^*$-algebras, 
which will be obtained in the next section. 

\subsection*{Highest weight theory}

Bounded unitary representations of $\U_\infty(\cH)$, or, 
equivalently, corresponding 
holomorphic representations of the complexified group 
\[ \GL_\infty(\cH) = \GL(\cH) \cap (\1 + K(\cH)),\]   
have 
been classified for general Hilbert spaces $\cH$ in \cite{Ne98}, 
where it is also shown that they all decompose as direct sums of 
irreducible ones and that the irreducible ones are 
highest weight representations $(\pi_\lambda, \cH_\lambda)$. 
To make this more precise, we choose an orthonormal basis 
$(e_j)_{j \in J}$ in $\cH$, so that $\cH \cong \ell^2(J,\C)$. 
Then the Lie subalgebra $\ft$ of diagonal operators in $\fu_\infty(\cH)$ 
is maximal abelian and, as a Banach space, isomorphic 
to $c_0(J,\R)$. The corresponding subgroup 
$T = \exp(\ft) \cong c_0(J,\R)/c_0(J,\Z) \cong c_0(J,\R)/\Z^{(J)}$ 
is the analog of a maximal torus in $\U_\infty(\cH)$. 

\begin{defn}\label{def_weights} 
(a) The {\it Weyl group} in our setting is  the group of finite permutations of $J$, 
and we denote it by $\cW = S_{(J)}$. It acts naturally by 
composition on the set $\Z^J$ of $\Z$-valued functions on $J$.

The {\it group of weights} is $\cP = \ell^1(J,\Z) \cong \Z^{(J)}$, 
that is, the additive group of all finitely supported functions $\lambda \: J \to \Z$, $j \mapsto \lambda_j $. 
It can also be identified with the character group of the Banach--Lie group $T = \exp \ft$ by assigning to $\lambda$ the character given by 
$\chi_\lambda(t) := \prod_{j \in J} t_j^{\lambda_j}$. 
We write $\eps_j \in \cP$ for the function 
defined by $\eps_j(k) = \delta_{jk}$. 

For every $\lambda\in\cP$ we write 
$\lambda_\pm := \max(\pm\lambda, 0)$, and we thus obtain a 
decomposition $\lambda = \lambda_+ - \lambda_-$. 
For every $\lambda\in\cP$ we denote by $[\lambda]=\cW\lambda$ the 
orbit of $\lambda$ with respect to the natural action of $\cW$ on $\cP$, 
and write $\cP/\cW:=\{[\lambda]\: \lambda\in\cP\}$ for the set of 
$\cW$-orbits in $\cP$. 

(b) Each $\lambda\in\cP$ defines a continuous linear functional in $\ft' \cong \ell^1(J,\R)$, 
and there exists a unique unitary representation $(\pi_\lambda, \cH_\lambda)$ 
of $\U_\infty(\cH)$ whose weight set is given by 
\begin{equation}
  \label{eq:weightset}
\cP_\lambda = \conv(\cW \lambda) \cap \cP, 
\end{equation}
(see  \cite{Ne98}). Here the uniqueness implies in particular
that $\pi_\mu \cong \pi_\lambda$ for $\mu \in \cW\lambda$, 
so that the equivalence classes of these representations are 
parametrized by the orbit space $\cP/\cW$. 
\end{defn}

Let 
$$ \Part(n,k) := 
\Big\{ \lambda = (\lambda_1,\ldots, \lambda_k) \in \N^k \:  
\lambda_1 \geq \ldots \geq \lambda_k > 0, \sum_{j=1}^k \lambda_j = n \Big\} $$
be the set of partitions of $n \in \N_0$ into $k$ pieces and 
put 
$$ \Part(n) := \bigcup_{k \leq n} \Part(n,k). $$

The following theorem explains how pairs of partitions parametrize the 
equivalence classes of bounded unitary representations of 
$\U_\infty(\cH)$. Below we shall briefly discuss how this fits 
Kirillov's classification of the continuous unitary representations 
of $\U_\infty(\cH)$ for the case where $\cH$ is separable 
(\cite{Ki73}).

\begin{thm}\label{thm:parti}
If $\cH$ is an arbitrary infinite dimensional complex Hilbert space, then the following assertions hold
for the representations $(\pi_\lambda)_{\lambda \in \cP}$ of $\U_\infty(\cH)$.  
\begin{description}
\item[\rm(i)] For $\lambda,\mu\in\cP$ we have 
$ \pi_\lambda \cong \pi_\mu$ if and only if 
$\mu \in \cW\lambda$. 
\item[\rm(ii)] There exists a bijective map 
$$\cP/\cW \to \bigcup_{n,m \in \N} \Part(n) \times \Part(m), 
\quad [\lambda]\mapsto((m^{\lambda_+}_k)_{k\ge1},(m^{\lambda_-}_k)_{k\ge1})$$
where $ m^{\lambda}_k := \vert\{ j \in J \: \lambda_j = k\}\vert$ whenever 
$\lambda\in\cP$ and $k \in \Z \setminus \{0\}$. 
\item[\rm(iii)] For every $\lambda\in\cP$ we have 
$\pi_\lambda\cong\pi_{\lambda_+}\otimes\pi_{\lambda_-}^*$. 
\end{description}
\end{thm}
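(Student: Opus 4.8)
The plan is to handle the three parts in the order (i), (ii), (iii), using from \cite{Ne98} only the two facts already recorded in the excerpt: every bounded representation of $\U_\infty(\cH)$ is a direct sum of the irreducible highest weight representations $\pi_\nu$, and $\pi_\nu$ is determined up to equivalence by its weight set $\cP_\nu = \conv(\cW\nu)\cap\cP$.

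For (i), the implication $\mu\in\cW\lambda\Rightarrow\pi_\lambda\cong\pi_\mu$ is immediate, since $\cW\mu=\cW\lambda$ yields $\cP_\mu=\cP_\lambda$ and hence equivalence by uniqueness. For the converse I would argue on the level of weight polytopes. As $\lambda$ is finitely supported, $\cW\lambda$ is a finite set and $\conv(\cW\lambda)$ is the permutohedron of $\lambda$, whose vertex set is exactly $\cW\lambda$. From $\cW\lambda\subseteq\cP_\lambda\subseteq\conv(\cW\lambda)$ one gets $\conv(\cP_\lambda)=\conv(\cW\lambda)$, so $\cW\lambda=\Ext(\conv(\cP_\lambda))$ is recovered from $\cP_\lambda$. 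Since equivalent representations have equal weight sets, $\pi_\lambda\cong\pi_\mu$ forces $\cP_\lambda=\cP_\mu$, whence $\cW\lambda=\cW\mu$ and $\mu\in\cW\lambda$.

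For (ii), the key observation is that two finitely supported weights lie in the same $\cW$-orbit if and only if they have the same multiset of values, equivalently the same multiplicities $m_k^{(\cdot)}$ for every $k\in\Z\setminus\{0\}$; the value $0$ may be ignored because $J$ is infinite, so finite permutations move the infinitely many zeros freely. Writing $\lambda=\lambda_+-\lambda_-$ with $\lambda_\pm$ of disjoint support, one has $m_k^{\lambda_+}=m_k^\lambda$ and $m_k^{\lambda_-}=m_{-k}^\lambda$ for $k\ge1$, and $\sum_{k\ge1}k\,m_k^{\lambda_+}=|\lambda_+|$, so $(m_k^{\lambda_+})_{k\ge1}$ is the multiplicity encoding of a partition of $n=|\lambda_+|$, and likewise $(m_k^{\lambda_-})_{k\ge1}$ of a partition of $m=|\lambda_-|$. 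Injectivity of the displayed map is then just the orbit characterization, and surjectivity follows by placing the parts of a prescribed pair of partitions on disjoint finite subsets of the infinite set $J$.

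Part (iii) is the substantive one and is where infinite dimensionality enters. First I would record the weight bookkeeping: if $v_+\in\cH_{\lambda_+}$ and $\xi_-\in\cH_{\lambda_-}^*$ are the multiplicity-one weight vectors of weights $\lambda_+$ and $-\lambda_-$, then $v_+\otimes\xi_-$ has weight $\lambda$. A short $\ell^1$-comparison shows that $\lambda=\mu+\nu$ with $\mu\in\conv(\cW\lambda_+)$ and $\nu\in\conv(\cW(-\lambda_-))$ forces $\mu=\lambda_+$, $\nu=-\lambda_-$ (using that $\lambda_+\ge0\ge-\lambda_-$ have disjoint support); hence $\lambda$ occurs in $\cH_{\lambda_+}\otimes\cH_{\lambda_-}^*$ with multiplicity one and is an extreme point of the weight polytope $\conv(\cW\lambda_+)+\conv(\cW(-\lambda_-))$. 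The crucial step is to prove that $\pi_{\lambda_+}\otimes\pi_{\lambda_-}^*$ is irreducible. Here I would invoke the Schur--Weyl duality for $\U_\infty(\cH)$ developed in this section (resp.\ in \cite{Ne98}): realizing $\pi_{\lambda_\pm}$ by Schur functors inside $\cH^{\otimes n}$ and $\cH^{\otimes m}$, the commutant of $\U_\infty(\cH)$ on $\cH^{\otimes n}\otimes(\cH^*)^{\otimes m}$ is generated by the commuting actions of $S_n$ and $S_m$ alone, so $\mathbb{S}_{\lambda_+}(\cH)\otimes\mathbb{S}_{\lambda_-}(\cH)^*$ is the isotypic component of a single $S_n\times S_m$-irreducible and is therefore $\U_\infty(\cH)$-irreducible. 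This is exactly the point where infinite dimensionality is essential: the contraction operators that split off lower components in the classical $\GL_n$-decomposition correspond to unbounded, non-Hilbert--Schmidt invariant tensors and hence vanish. Granting irreducibility, $\pi_{\lambda_+}\otimes\pi_{\lambda_-}^*\cong\pi_\nu$ for some $\nu$; since $\lambda$ is an extreme weight it is a vertex of $\conv(\cW\nu)$, so $\lambda\in\cW\nu$, and part (i) gives $\pi_{\lambda_+}\otimes\pi_{\lambda_-}^*\cong\pi_\lambda$. I expect the main obstacle to be precisely this irreducibility, i.e.\ establishing the $\U_\infty(\cH)$ Schur--Weyl duality in the form that the commutant on the mixed tensor space is no larger than $\C[S_n\times S_m]$; the weight computations and the convex-geometric recovery of orbits in (i)--(ii) are routine by comparison.
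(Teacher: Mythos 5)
Parts (i) and (ii) of your proposal match the paper's argument: recovering $\cW\lambda$ as $\Ext(\conv(\cP_\lambda))$ and encoding orbits by the multiplicities $m^{\lambda_\pm}_k$ is exactly what is done there. Your reduction of (iii) to the irreducibility of $\pi_{\lambda_+}\otimes\pi_{\lambda_-}^*$ (equality of weight sets, multiplicity one of the extreme weight $\lambda$, then the classification of bounded irreducibles by highest weights) is also sound and is essentially the paper's reduction.

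The gap is the irreducibility itself, which you correctly flag as the main obstacle but then discharge by ``invoking'' a Schur--Weyl duality for the mixed tensor space $\cH^{\otimes n}\otimes(\cH^*)^{\otimes m}$ that is not available in the form you need: Appendix~\ref{app:a} of the paper establishes the duality only for the purely covariant powers $V^{\otimes k}$, and the assertion that the commutant of $\U_\infty(\cH)$ on the mixed space is no larger than $\C[S_n\times S_m]$ is essentially equivalent to the simultaneous irreducibility of all the $\pi_{\lambda_+}\otimes\pi_{\lambda_-}^*$, so as written you are assuming what is to be proved. Your heuristic that the contraction tensors fail to be Hilbert--Schmidt explains why the finite dimensional (walled Brauer algebra) obstruction disappears, but it only shows that certain candidate operators are absent from the commutant, not that nothing else lies in it. The paper closes this gap by a different device: it restricts to the directed family of subgroups $\GL_\infty(\cH_S)$ for countable $S\supseteq\supp\lambda$, quotes Olshanski's result (\cite[Subsect.~2.14]{Ol90}) that the mixed tensor product of two irreducible highest weight representations is irreducible when the underlying Hilbert space is \emph{separable}, and then passes to the general case with the inductive irreducibility criterion of Proposition~\ref{inductive}. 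To keep your route you would have to actually prove the mixed commutant theorem for $\U_\infty(\cH)$, which is a nontrivial result in its own right; alternatively, adopt the separable-plus-direct-limit argument.
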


\begin{prf} (i) From 
\eqref{eq:weightset} and the fact that 
\begin{equation} \label{eq:weightext}
\cW\lambda = \Ext(\conv(\cW\lambda)) = \Ext(\conv(\cP_\lambda)) 
\end{equation}
is the set of extreme points of $\conv(\cP_\lambda)$
(\cite[proof of Thm.~1.20]{Ne98}), 
it follows that the equivalence class of the representation 
determines $\cW\lambda$ because it determines its weight set. 

(ii) It is easy to see that the 
Weyl group orbit of any $\lambda\in\cP$ 
is uniquely determined by the numbers 
$m^\lambda_k $ with $ k \in \Z \setminus \{0\}$. 
The property 
$\mu \in \cW\lambda$ is equivalent to 
$\mu_\pm \in \cW \lambda_\pm$. 
For $\lambda \geq 0$, the number 
$n_\lambda := \sum_{j \in J}\lambda_j$ is an invariant of the 
Weyl group orbit and $\lambda$ describes a partition 
of $n_\lambda$ into $m_\lambda := \sum_k m_k$ summands. 
We thus get a one-to-one correspondence 
between partitions of $n \in \N$ and the set of Weyl group 
orbits in the set $\{ \lambda \in \Z^{(J)} \:  \lambda\geq 0, 
n_\lambda = n \}$. This leads to the asserted bijection. 

(iii) This was already noted in \cite[Subsect.~2.14]{Ol90} for separable Hilbert spaces; see also \cite[Prop.~IX.1.15]{Ne00} for an algebraic version. 

The present assertion can be obtained as follows.  
It is straightforward to check that the unitary representations $\pi_\lambda$ and $\pi_{\lambda_+}\otimes\pi_{\lambda_-}^*$ have the same sets of weights 
and they are highest weight representations as in \cite[Def.~III.6(c)]{Ne98}.  
Moreover, $\pi_\lambda$ is an irreducible representation.  
Therefore, due to the fact that the irreducible highest weight representations are uniquely determined by their sets of weights  
(see \cite[Cor.~I.15]{Ne98}), the assertion will follow as soon as we have proved that 
the unitary representation $\pi_{\lambda_+}\otimes\pi_{\lambda_-}^*$ is irreducible as well. 

To this end, denote by $\Sigma$ the family of all countable subsets of $S\subeq J$ with $\supp\lambda\subset S$. 
For every $S\in\Sigma$ denote $\cH_S=\ell^2(S,\C)\hookrightarrow \ell^2(J,\C)=\cH$ and  
use the orthogonal decomposition 
$\cH=\cH_S\oplus\cH_S^\perp$ to construct the natural embedding 
\[
\GL_\infty(\cH_S)\hookrightarrow\GL_\infty(\cH),\quad  
g\mapsto\begin{pmatrix}g & 0\\0 &\1\end{pmatrix}.
\]
It then follows that the directed union of subgroups 
$\bigcup_{S\in\Sigma}\GL_\infty(\cH_S)$ is dense in the Banach--Lie group  
$\GL_\infty(\cH)$, since the closure of the range of any compact operator is a separable subspace.  
If $v_\pm\in\cH_{\lambda_\pm}$ is a primitive vector for the highest weight representation $\pi_{\lambda_\pm}$, 
then \cite[Prop.~IX.1.14]{Ne00} shows that 
$\pi_{\lambda_\pm,S}\colon \GL_\infty(\cH_S)\to \GL(\cH_{\lambda_\pm,S})$ 
is an  irreducible highest weight representation of $\GL_\infty(\cH_S)$ with highest weight $\lambda_\pm$, where $\cH_{\lambda_\pm,S}$ 
stands for the closed linear subspace of $\cH_{\lambda_\pm}$ generated by 
$\pi_{\lambda_\pm}(\GL_\infty(\cH_S))v_{\lambda_\pm}$. 
Since $\cH_S$ is a separable Hilbert space, it follows by the aforementioned remark of \cite[Subsect.~2.14]{Ol90} that the representation 
$\pi_{\lambda_+,S}\otimes\pi_{\lambda_-,S}^*$ of $\GL_\infty(\cH_S)$ is irreducible for each $S\in\Sigma$. 
Then $\pi_{\lambda_+}\otimes\pi_{\lambda_-}^*$ is an irreducible representation of $\GL_\infty(\cH)$ by Proposition~\ref{inductive}, and this completes the proof, as discussed above. 
\end{prf}

For any $\lambda\in\cP$ let  $I_\lambda \subeq \fu_\infty(\cH)' \cong \Herm_1(\cH)$ 
denote the momentum set of $\pi_\lambda$. Then 
$I_\lambda$ is a weak-$*$-closed bounded convex subset, hence 
weak-$*$-compact. This implies that the projection map 
$$ p_\ft \: \fu_\infty(\cH)' \to \ft' \cong \ell^1(J,\R), $$
maps $I_\lambda$ onto a weak-$*$-closed compact subset of 
$\ft$, which therefore coincides with the momentum set 
of the representation $\pi_\lambda\res_T$ of the diagonal group 
(Proposition~\ref{pullback}).  
Since this representation decomposes into weight spaces, 
\eqref{eq:weightset} leads to 
\begin{equation}
  \label{eq:projmom}
p_\ft(I_\lambda) = I_{\pi_\lambda}\res_\ft
= \oline{\conv}^{w^*}(\cW\lambda) =: \co(\lambda). 
\end{equation}

\begin{lem}\label{lem:1.2} 
If $\lambda, \mu \: J \to \R$ are finitely supported 
with $\mu\not\in \cW\lambda$, then we have 
$\co(\lambda) \not= \co(\mu)$ for the weak-$*$-closed convex 
hulls of the Weyl group orbits in $\ft' \cong \ell^1(J,\R)$. 
\end{lem}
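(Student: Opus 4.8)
The plan is to recover the unordered multiset of values of $\lambda$ directly from the convex set $\co(\lambda)$ by testing it against the coordinate-indicator functionals, and then to observe that this multiset determines the Weyl orbit $\cW\lambda$ completely. Since $\co(\lambda) = \co(\mu)$ would force all these test values to agree, a discrepancy in the multisets (which is exactly what $\mu \notin \cW\lambda$ means) will produce a functional separating the two sets.

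Concretely, for a finite subset $S \subeq J$ with $|S| = k$ I would compute the support value $h_\lambda(\1_S) := \sup_{x \in \co(\lambda)} \langle x, \1_S\rangle$, where $\1_S \in c_0(J,\R) \cong \ft$ is the indicator of $S$. Because $x \mapsto \langle x, \1_S\rangle$ is weak-$*$-continuous and linear, passing to the convex hull and then to the weak-$*$-closure leaves the supremum unchanged, so $h_\lambda(\1_S) = \sup_{w \in \cW}\langle w\lambda, \1_S\rangle$. As $w$ runs over all finite permutations, the set $w^{-1}(S)$ runs over all $k$-element subsets of $J$; hence $h_\lambda(\1_S)$ is simply the sum of the $k$ largest entries of $\lambda$. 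Here the fact that $J$ is infinite while $\lambda$ is finitely supported is essential: the multiset of entries contains infinitely many zeros, so the $k$ largest entries are the $k$ largest positive values of $\lambda$ (padded by zeros once the positive values run out), never a negative value. Thus $h_\lambda(\1_S)$ depends only on $k$ and equals $\sum_{i=1}^{\min(k,p)} a_i$, where $a_1 \ge \cdots \ge a_p > 0$ enumerates the positive values of $\lambda$ with multiplicity.

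Taking successive differences of $h_\lambda(\1_S)$ in $k$ then returns the decreasing sequence of positive values of $\lambda$, and the same computation applied to $-\1_S$ (equivalently, to $\lambda$ replaced by $-\lambda$) returns the decreasing sequence of its negative values. These two sorted sequences determine the multiset of all nonzero values of $\lambda$, and therefore determine $\cW\lambda$, since the Weyl orbit consists precisely of the finitely supported functions with that prescribed multiset of nonzero values. Consequently, if $\mu \notin \cW\lambda$ the multisets differ, so either the positive or the negative sorted sequences differ, whence $h_\lambda(\1_S) \ne h_\mu(\1_S)$ (or the analogue with $-\1_S$) for some finite $S$. This exhibits a weak-$*$-continuous functional separating $\co(\lambda)$ from $\co(\mu)$, so $\co(\lambda) \ne \co(\mu)$.

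The only genuinely delicate points are bookkeeping rather than conceptual: first, checking that the supremum over the weak-$*$-closed convex hull really reduces to the supremum over the bare orbit $\cW\lambda$ (which is immediate from weak-$*$-continuity of the test functionals, and is precisely where the closure in the definition of $\co(\lambda)$ does no harm); and second, verifying that the infinitely many zero coordinates, rather than the negative entries, occupy the top $k$ slots, which is what cleanly decouples the positive and negative parts of $\lambda$. I expect no serious obstacle beyond making these two observations precise.
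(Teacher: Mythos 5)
Your proof is correct, and it reaches the conclusion by a genuinely different and somewhat more economical route than the paper's. The paper fixes an ordering of $J$ making both $\lambda$ and $\mu$ non-increasing, reduces to the case $\mu_+\notin\conv(\cW\lambda_+)$, invokes the Schur Convexity Theorem to locate a minimal index $j_0$ where the partial sums disagree, and then uses the root-cone estimate of Lemma~\ref{lemma_app2} to verify that the indicator of $\{j\le j_0\}$ is maximized over $\cW\lambda$ at $\lambda$ itself; the outcome there is the slightly stronger statement that one of the two points fails to lie in the other's hull. You instead evaluate the support function of $\co(\lambda)$ on the weak-$*$-continuous functionals $\pm\1_S\in c_0(J,\R)$ and observe that the resulting values are exactly the functionals $L_k(\pm\lambda)$ of Appendix~\ref{app:b} (sums of the $k$ largest entries of $\pm\lambda$), whose successive differences in $k$ reconstruct the sorted sequences of positive and negative values of $\lambda$ --- that is, the complete invariant of the orbit $\cW\lambda$, since two finitely supported functions are $S_{(J)}$-conjugate exactly when their multisets of nonzero values agree. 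This bypasses the Schur Convexity Theorem, the case analysis over $\mu_\pm$, and the root-cone lemma entirely, at the cost of proving only that the two convex sets differ rather than exhibiting a separating point, which is all the lemma requires. The two delicate points you flag are indeed the only ones that need checking, and both are handled correctly: the supremum of a weak-$*$-continuous linear functional is unchanged by passing to the weak-$*$-closed convex hull, and the infinitely many zero coordinates (here the infinite dimensionality of $\cH$, i.e.\ of $J$, is used) guarantee that the top $k$ slots are filled by positive values padded with zeros, which cleanly decouples $\lambda_+$ from $\lambda_-$. Your argument is in effect a direct special case of the characterization later established in Lemma~\ref{lem:convhul}(i).
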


\begin{prf} First we observe that 
$\lambda^{-1}(0) \cap \mu^{-1}(0)$ is cofinite in $J$, 
which implies that there exists a partial order 
$\leq$ on $J$ for which 
$\lambda \: J \to \Z$ is non-increasing and for 
$\mu$ the set 
$\{ (i,j) \: i < j, \mu_i < \mu_j\}$ 
of inversions is finite, so that 
$\cW\mu$ contains a non-increasing element, and we may w.l.o.g.\ 
assume that $\mu$ is also non-increasing. Then 
$$ J = J_+ \dot\cup J_0 \dot\cup J_-, \quad 
J_+ < J_0 < J_-, $$
where $\lambda$ and $\mu$ vanish on $J_0$, are 
$\geq 0$ on the finite set $J_+$ and 
$\leq 0$ on the finite set $J_-$. 

As we have seen in the proof of Theorem~\ref{thm:parti}, 
$\mu \not\in \cW\lambda$ implies that either 
$\mu_+ \not\in \cW\lambda_+$ or 
$\mu_- \not\in \cW\lambda_-$. We assume without loss of generality 
that $\mu_+ \not\in \cW\lambda_+$. 
Since $\cW\lambda_+ = \Ext(\conv(\cW\lambda_+))$ 
by \eqref{eq:weightext}, 
we then either have $\mu_+ \not\in \conv(\cW\lambda_+)$ 
or vice versa. We assume that $\mu_+ \not\in \conv(\cW\lambda_+)$. 
Then the Schur Convexity Theorem implies that there 
exists a minimal $j_0 \in J_+$ with 
$$ \sum_{j \leq j_0} \lambda(j) < \sum_{j \leq j_0} \mu(j). $$
Put $x_j = 1$ for $j \leq j_0$ and $x_j =0$ otherwise 
and note that $x$ has finite support and satisfies 
$\lambda(x) < \mu(x).$
For any other element $\lambda' \in \cW\lambda$, 
the difference $\lambda- \lambda'$ is a sum of positive 
roots $\eps_i - \eps_j$, $i < j$, so that 
$x_i \geq x_j$ for $i < j$, implies that 
$$ \mu(x) > \lambda(x) \geq \lambda'(x) $$ 
(see Lemma~\ref{lemma_app2}). 
We conclude that 
$\mu$ is not contained in the weak-$*$-closed convex hull 
of $\conv(\cW\lambda)$ in $\ell^1(J,\R)$. 
The other cases are treated similarly. 
\end{prf}

\begin{prop}\mlabel{prop:mostbasic} 
If $\lambda, \mu \in \cP$ and $\pi_\lambda\not\cong \pi_\mu$, then the 
corresponding momentum sets $I_\lambda$ and $I_\mu$ in 
$\fu_\infty(\cH)'$ are different.   
\end{prop}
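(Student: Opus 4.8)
The plan is to reduce this statement about momentum sets sitting in the large dual space $\fu_\infty(\cH)'$ to a statement about weak-$*$-closed convex hulls of Weyl group orbits in the much smaller space $\ft' \cong \ell^1(J,\R)$, where the required separation has already been carried out in Lemma~\ref{lem:1.2}. The whole argument is essentially a contrapositive built on two ingredients that are already available: the combinatorial reformulation of equivalence and the projection formula \eqref{eq:projmom}.

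First I would translate the hypothesis $\pi_\lambda \not\cong \pi_\mu$ into combinatorial terms. By Theorem~\ref{thm:parti}(i), inequivalence is equivalent to $\mu \notin \cW\lambda$, which is exactly the form in which Lemma~\ref{lem:1.2} applies, once we recall that every element of $\cP = \ell^1(J,\Z)$ is finitely supported. Next I would invoke the projection map $p_\ft \: \fu_\infty(\cH)' \to \ft'$, which is nothing but the restriction dual to the inclusion $\ft \hookrightarrow \fu_\infty(\cH)$. By Proposition~\ref{pullback} this map sends $I_\lambda$ onto the momentum set of $\pi_\lambda\res_T$, and since that representation decomposes into weight spaces with weight set $\cP_\lambda = \conv(\cW\lambda)\cap\cP$, formula \eqref{eq:projmom} gives $p_\ft(I_\lambda) = \co(\lambda) = \oline{\conv}^{w^*}(\cW\lambda)$, and likewise $p_\ft(I_\mu) = \co(\mu)$.

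I would then conclude by contraposition: if one had $I_\lambda = I_\mu$, applying $p_\ft$ would force $\co(\lambda) = \co(\mu)$, contradicting Lemma~\ref{lem:1.2}, which asserts $\co(\lambda) \neq \co(\mu)$ precisely because $\mu \notin \cW\lambda$. Hence $I_\lambda \neq I_\mu$. The proof is short because the genuine difficulty is already packaged into Lemma~\ref{lem:1.2}, whose proof rests on the Schur Convexity Theorem and the extreme-point description \eqref{eq:weightext} of the Weyl orbit inside $\conv(\cW\lambda)$. The only point demanding care is bookkeeping: one must work with the full weak-$*$-closed momentum set $I_\lambda$ (rather than the norm-closed variant $\Inm_\lambda$) so that $p_\ft$ takes it onto a genuinely weak-$*$-compact set coinciding with $\co(\lambda)$, as stated in \eqref{eq:projmom}. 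I do not expect any analytic obstacle beyond this.
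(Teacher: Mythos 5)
Your argument is correct and is essentially identical to the paper's own proof: both reduce the claim via Theorem~\ref{thm:parti}(i) to $\mu\notin\cW\lambda$, apply the projection formula \eqref{eq:projmom} to get $p_\ft(I_\lambda)=\co(\lambda)$, and then invoke Lemma~\ref{lem:1.2} to separate $\co(\lambda)$ from $\co(\mu)$. No gaps.
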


\begin{prf} If $\pi_\lambda$ and $\pi_\mu$ are not equivalent, then 
$\mu \not\in \cW\lambda$ (Theorem~\ref{thm:parti}), 
so that the preceding proposition 
implies that the weak-$*$-closed convex hulls of 
$\cW\lambda$ and $\cW\mu$ in $\ell^1(J,\R)$ are different. 
In view of \eqref{eq:projmom}, this leads to 
$p_\ft(I_\lambda) \not= p_\ft(I_\mu)$, which implies in 
particular that $I_\lambda \not= I_\mu$. 
\end{prf} 

\subsection*{Realization in tensor products} 

To match the highest weight approach from \cite{Ne98} with 
Kirillov's classification in terms of Schur--Weyl theory (\cite{Ki73}), 
it suffices  to explain how 
the representation $\pi_\lambda$ with highest weight 
$\lambda \geq 0$ occurs in the decomposition of 
some tensor product $\cH^{\otimes n}$ (cf.\ Theorem~\ref{thm:parti}(iii)). 

Let $n := n_\lambda = \sum_{j \in J}\lambda_j$ and suppose that 
$\lambda \in \Part(n,k)$. Then $\lambda$ defines a 
Ferrers diagram (shape), containing 
$\lambda_j$ boxes in the $j$th row (cf.\ \cite[Sect.~8.1.2]{GW98}). 
A tableau of shape $\lambda$ is a bijective assignment of the integers 
$1,\ldots, n$ to the boxes of~$\lambda$. Clearly, 
the group $S_n$ acts simply transitively on the set 
$\Tab(\lambda)$ of all tableaux of shape $\lambda$. 

We may w.l.o.g.\ assume that $\N \subeq J$, so that we have 
orthonormal vectors $(e_j)_{j \in \N}$ in $\cH$. 
For $T \in \Tab(\lambda)$ we set $i_j = r$ if $j$ occurs in the 
$r$th row of $T$ and define 
$$ e_T := e_{i_1} \otimes \cdots \otimes e_{i_n} \in \cH^{\otimes n}. $$
The Young symmetrizer $s(T) \in \C[S_n]$ is the product 
$s(T) = c(T)r(T)$, where 
$$ r(T) = \sum_{ \sigma \in R_T} \sigma \quad \mbox{ and } \quad 
c(T) = \sum_{ \sigma \in C_T} \sgn(\sigma)\sigma, $$
and $C_T\cong \prod S_{\lambda_i} \subeq S_n$ is the subgroup preserving 
the column partition of $\{1,\ldots, n\}$ 
and $R_T$ is the subgroup preserving the 
row partition. Then 
$c(T)$ projects $\cH^{\otimes n}$ onto the subspace 
$$ \Lambda^{\lambda_1}(\cH) \otimes \cdots \otimes \Lambda^{\lambda_k}(\cH)$$ 
and from \cite[Lemma~9.3.1]{GW98} we derive that $0\not= c(T)e_T$ 
is a vector of weight $\lambda$, fixed by the group $N$ of 
upper triangular matrices in $\GL(\cH)$ with respect to any 
linear order on $J$ for which $1,\ldots, k$ come first 
(so that  $\lambda = (\lambda_1, \ldots, \lambda_k)$ defines a 
dominant weight). Moreover, 
$$ M^\lambda := \sum_{T \in \Tab(\lambda)} \C c(T) e_T $$ 
is an $S_n$-submodule of $\cH^{\otimes n}$ isomorphic to the 
multiplicity space of the highest weight representation 
$\tilde\pi_\lambda^\C$ of $\GL(\cH)$ in $\cH^{\otimes n}$ 
(cf.\ \cite[Prop.~9.3.4, Thm.~9.3.9]{GW98} and Appendix~\ref{app:a}).

\subsection*{Flag manifolds}

Let $(e_j)_{j\in J}$ be an orthonormal basis in $\cH$ 
and $\lambda \: J \to \Z$ be finitely supported, 
so that 
\begin{equation*}
D_\lambda :=\sum\limits_{j\in J}\lambda_j\la\cdot,e_j\ra e_j 
\end{equation*}
is a hermitian finite rank operator, hence in particular 
of trace class, so that we get a continuous linear functional 
\begin{equation}\label{psi}
\psi_\lambda := \psi_{D_\lambda} 
\colon B(\cH)\to\C,\quad \psi_\lambda(T)=-i\trace(D_\lambda T).
\end{equation}

If $\cH_n := \ker(D_\lambda - n \1)$, $n \in \Z$, denotes the eigenspaces 
of $D_\lambda$, then the assumptions on $\lambda$ imply that 
only finitely many are non-zero and only $\cH_0 = \ker D_\lambda$ is 
infinite dimensional. 

We write 
\[ \cN_\lambda := \Big\{ X \in B(\cH) \: (\forall n \in \Z)\, 
X \cH_n \subeq \sum_{m < n} \cH_m\Big\} \] 
for the subalgebra of $B(\cH)$ 
consisting of the operators which are block strictly 
upper-triangular with respect to the spectral scale of 
the self-adjoint operator $D_\lambda$ 
and note that 
\[ \{D_\lambda\}' = \{ X \in B(\cH) \: (\forall n \in \Z)\, 
X \cH_n \subeq \cH_n\} \] 
is the commutant of $\{D_\lambda\}$. 
It is easily seen that we have a triple decomposition for $B(\cH)$ 
given by  the direct sum decomposition 
\begin{equation}\label{compl}
\cN_\lambda\oplus\{D_\lambda\}'\oplus\cN_\lambda^*=B(\cH).
\end{equation}

Let $N := \|\lambda\|_\infty$. 
Then the eigenspaces $\cH_n$ of $D_\lambda$ define the finite flag 
\[ \cF = (F_{-N}, \ldots, F_N), \quad 
F_k := \sum_{n \leq k} \cH_k \] 
of closed subspace of $\cH$. 
The space $\Gr_\cF := \GL(\cH)\cF \cong \GL(\cH)/P_\cF$ 
is called the corresponding {\it flag manifold}. Here 
\[ P_\cF := (\cN_\lambda \oplus \{D_\lambda\}')^\times 
= \{ g \in \GL(\cH) \: (\forall n) g F_n = F_n \} \] 
is the stabilizer of $\cF$. Since its Lie algebra 
$\fp_\cF = \cN_\lambda + \{D_\lambda\}'$ is complemented by 
the closed subspace $\cN_\lambda^*$, $\Gr_\cF$ carries the 
structure of a complex Banach manifold for which the quotient 
map $\GL(\cH) \to \Gr_\cF, g \mapsto g\cF$ is a holomorphic submersion. 

It is easy to see that a flag 
$\cF' = (F_{-N}', \ldots, F_N')$ of closed subspaces of $\cH$ 
is contained in $\Gr_\cF$ if and only if 
\[ \dim F_n = \dim F_n' \quad \mbox{ for } \quad n < 0 
\quad \mbox{ and } \quad 
\dim F_n^\bot = \dim F_n'^\bot \quad \mbox{ for } \quad n \geq 0\] 
and hence that the subgroup $\U_\infty(\cH) \subeq \U(\cH)$ 
act transitively on $\Gr_\cF$. 

The stabilizer of $\cF$ in $\U(\cH)$ is 
\begin{equation}
  \label{eq:stabuni}
\U(\cH)_\cF = \U(\cH)_\lambda := \U(\cH) \cap \{D_\lambda\}' = 
\{ u \in \U(\cH)\: (\forall n)\, u\cH_n = \cH_n\}, 
\end{equation}
so that $\Gr_\cF \cong \U(\cH)/\U(\cH)_\lambda$ is a Banach 
homogeneous space of $\U(\cH)$ (and similary of $\U_\infty(\cH)$).

Now let $\widetilde{\pi}_\lambda^{\C}\colon\GL(\cH)\to B(\cH_\lambda)$ 
be the canonical extension of the highest weight representation 
corresponding to $\lambda$ as in \cite[Cor.~III.11 and Thm.~III.15]{Ne98} 
and pick a unit vector $v_\lambda\in\cH_\lambda$ of weight $\lambda$.

In the following theorem, we shall use the realization of a 
Hilbert space by holomorphic sections of a line bundle: 

\begin{rem} \mlabel{rem:5.9} If $\cH$ is a complex Hilbert space, 
then its projective space $\bP(\cH)$ is a complex Hilbert manifold. 
Moreover, there exists a holomorphic line bundle 
$q \: \bL_\cH \to \bP(\cH)$ with the property that for 
every non-zero continuous linear functional 
$\alpha \in \cH^*$ we have on the open subset
$U_\alpha := \{ [v] \in \bP(\cH) \: \alpha(v) \not=0\}$ 
a bundle chart 
$$ \phi_\alpha \: (\bL_\cH)\res_{U_\alpha} \to U_\alpha \times \C $$
such that the transition functions are given by 
$$ \phi_\beta \circ \phi_\alpha^{-1}([v],z) 
= \Big([v], \frac{\alpha(v)}{\beta(v)}\Big)\quad \mbox{ for } \quad 
0\not=\alpha, \beta \in \cH^*. $$ 
This implies that each $0\not=v \in \cH$ defines a linear functional 
on the fiber $(\bL_\cH)_{[v]}$ by 
$$ \phi_\alpha^{-1}([v], z) \mapsto \alpha(v) z, $$
which further implies that $(\bL_\cH)_{[v]}^* = [v]$, 
i.e., $\bL_\cH^*$ is the tautological bundle over $\bP(\cH)$. 

The complement of the zero-section 
of $\bL_\cH$ is equivalent, as a $\C^\times$-bundle,  
to the projection $\cH\setminus \{0\} \to \bP(\cH)$ by 
the map $\phi_\alpha([v],z) \mapsto \frac{1}{z \alpha(v)}v$. 
This identification can be used to show that the natural map 
$$ \Psi \: \cH^* \to \Gamma(\bL_\cH), \quad 
\Psi(\alpha)([v]) = \phi_\beta^{-1}\Big([v], \frac{\alpha(v)}{\beta(v)}\Big) 
\quad \mbox{for } \quad 
\beta(v)\not=0 $$
defines a linear isomorphism (see \cite[Thm.~V.4]{Ne01} for details). 

As the group $\U(\cH)$ acts smoothly by holomorphic bundle 
isomorphisms on $\bL_\cH$, this construction shows that the 
unitary representation $\pi^* \: \U(\cH) \to \U(\cH^*)$, 
given by $\pi^*(g)\alpha = \alpha \circ \pi(g)^*$ 
can be realized in the space $\Gamma(\bL_\cH)$ of holomorphic 
sections of $\bL_\cH$. 

To realize the identical representation on $\cH$ itself 
by holomorphic sections, we simply exchange the role 
of $\cH$ and $\cH^*$, which leads to a $\U(\cH)$-equivariant isomorphism 
$\cH \to \Gamma(\bL_{\cH^*})$. 
\end{rem}

We now establish a partial generalization of \cite[Prop.~7.2]{Ne10}.

\begin{thm}\label{flags} 
The following assertions hold: 
\begin{description}
 \item[\rm(i)]
For every $X\in \fp_\cF^* = \cN_\lambda^*\oplus\{D_\lambda\}'$ we have 
\begin{equation}
  \label{eq:eigenvect}
\dd\widetilde{\pi}_\lambda^{\C}(X)v_\lambda= i\psi_\lambda(X)v_\lambda
\quad \mbox{ with } \quad 
\psi_\lambda(X) = 0 \quad \mbox{ for } \quad 
X \in \cN_\lambda^*,\end{equation}
and $\la \dd\tilde\pi_\lambda^\C(X)v_\lambda, v_\lambda \ra = 0$  
for $X \in \cN_\lambda$. 
\item[\rm(ii)] For $\alpha_\lambda(w) := \la w, v_\lambda\ra$, 
we obtain a holomorphic equivariant map 
\[ \eta \: \U(\cH)/\U(\cH)_\lambda
\cong \GL(\cH)/P_\lambda \cong \Gr_\cF \to \bP(\cH_\lambda^*), \quad 
u\cF \mapsto [\tilde\pi_\lambda^*(u)\alpha_\lambda]\] 
and if $\bL_\lambda := \eta^*\cL_{\cH_\lambda^*}$ is the pullback of the 
canonical line bundle on $\bP(\cH_\lambda^*)$ with 
$\Gamma_{\rm hol}(\cL_{\cH_\lambda^*}) \cong \cH_\lambda$, then 
$\cL_\lambda$ is $\U(\cH)$-equivariant and we obtain a realization 
of $\cH_\lambda$ as a Hilbert space of holomorphic sections of 
$\bL_\lambda$. 
\item[\rm(iii)] The coadjoint $\U(\cH)$-orbit $\cO_{\psi_\lambda}$ of 
$\psi_\lambda\vert_{\fu(\cH)}$ is a homogeneous Banach manifold 
isomorphic to $\Gr_\cF$ with a $\U(\cH)$-invariant strong K\"ahler structure, 
and we have 
$$\Phi_{\widetilde{\pi}_\lambda}([v_\lambda])=\psi_\lambda\vert_{\fu(\cH)} 
\quad\text{ and }\quad
I_{\widetilde{\pi}_\lambda}=\oline{\conv}^{w^*}\big(\cO_{\psi_\lambda}\big).$$
\end{description}
\end{thm}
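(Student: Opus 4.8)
The plan is to derive the geometric statements (ii) and (iii) from the algebraic eigenvector statement (i), which I treat first. For (i) I would fix a linear order on $J$ adapted to the eigenblocks of $D_\lambda$, so that $\lambda$ is non-increasing and each eigenspace $\cH_n = \ker(D_\lambda - n\1)$ is spanned by basis vectors on which $\lambda$ is constant equal to $n$. In this dominant order, $\cN_\lambda^*$, which raises the $D_\lambda$-eigenvalue, lies in the strictly upper-triangular algebra $\fn^+$ annihilating the highest weight vector $v_\lambda$, so $\dd\widetilde{\pi}_\lambda^{\C}(X)v_\lambda = 0$ for $X \in \cN_\lambda^*$; the vanishing $\psi_\lambda(X) = -i\tr(D_\lambda X) = 0$ there is immediate since $D_\lambda$ is block-scalar while $X$ is block off-diagonal. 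For the Levi $\{D_\lambda\}'$ the key observation is that every root occurring in a block $\fgl(\cH_n)$ is orthogonal to $\lambda$ (as $\lambda \equiv n$ on $\cH_n$); hence by $\fsl_2$-theory both the raising and the lowering within-block root vectors kill $v_\lambda$, the diagonal acts by $\lambda\vert_\ft = i\psi_\lambda\vert_\ft$, and a cyclicity-of-trace computation shows $\psi_\lambda\vert_{\{D_\lambda\}'}$ is a character, yielding $\dd\widetilde{\pi}_\lambda^{\C}(X)v_\lambda = i\psi_\lambda(X)v_\lambda$ on all of $\{D_\lambda\}'$. The orthogonality for $X \in \cN_\lambda$ then follows from the weight grading (the image carries no weight-$\lambda$ component), or from $X^* \in \cN_\lambda^*$ together with the $*$-structure.

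For (ii), assertion (i) says exactly that $\C v_\lambda$ is a $\fp_\cF^*$-eigenline; dualizing (replacing $g$ by $(g^*)^{-1}$) this is equivalent to $[\alpha_\lambda]$ being a $P_\cF$-semi-invariant line in $\cH_\lambda^*$. Therefore the orbit map $u \mapsto [\widetilde\pi_\lambda^*(u)\alpha_\lambda]$ factors through $\GL(\cH)/P_\cF \cong \Gr_\cF$ and is $\U(\cH)$-equivariant, and it is holomorphic because $\widetilde\pi_\lambda^{\C}$ is holomorphic and $\GL(\cH) \to \Gr_\cF$ is a holomorphic submersion. The pullback $\bL_\lambda = \eta^*\cL_{\cH_\lambda^*}$ is then $\U(\cH)$-equivariant as the pullback of an equivariant bundle along an equivariant map, and combining the identification $\Gamma_{\rm hol}(\cL_{\cH_\lambda^*}) \cong \cH_\lambda$ from Remark~\ref{rem:5.9} with \cite[Thm.~V.4]{Ne01} realizes $\cH_\lambda$ as holomorphic sections of $\bL_\lambda$; injectivity of this realization uses that $v_\lambda$ is a cyclic vector, so its matrix coefficients separate the points of $\cH_\lambda$.

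For (iii), I would first compute the coadjoint stabilizer: $\Ad^*(u)\psi_\lambda = \psi_\lambda$ is equivalent to $uD_\lambda u^* = D_\lambda$ by nondegeneracy of the trace pairing, i.e.\ to $u \in \U(\cH)_\lambda$, whence $\cO_{\psi_\lambda} \cong \U(\cH)/\U(\cH)_\lambda \cong \Gr_\cF$; the complemented splitting \eqref{compl} makes this a homogeneous Banach manifold, and transporting the complex structure of $\Gr_\cF$ together with the Kostant--Kirillov--Souriau form gives the invariant strong K\"ahler structure, as in \cite[Prop.~7.2]{Ne10}. The identity $\Phi_{\widetilde\pi_\lambda}([v_\lambda]) = \psi_\lambda\vert_{\fu(\cH)}$ I would obtain by writing any $X \in \fu(\cH)$ as $(Y - Y^*) + X_0$ with $Y \in \cN_\lambda$ and $X_0 \in \{D_\lambda\}' \cap \fu(\cH)$ and feeding this into (i): the $\cN_\lambda$- and $\cN_\lambda^*$-contributions drop out and only the character term $i\psi_\lambda(X_0) = i\psi_\lambda(X)$ survives. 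The inclusion $\oline{\conv}^{w^*}(\cO_{\psi_\lambda}) \subseteq I_{\widetilde\pi_\lambda}$ is then immediate from equivariance of $\Phi_{\widetilde\pi_\lambda}$, since $\cO_{\psi_\lambda} = \Ad^*(\U(\cH))\Phi_{\widetilde\pi_\lambda}([v_\lambda]) \subseteq \im(\Phi_{\widetilde\pi_\lambda})$.

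The reverse inclusion is the main obstacle, and is equivalent to showing that $\im(\Phi_{\widetilde\pi_\lambda})$, hence every extreme point of the weak-$*$-compact convex set $I_{\widetilde\pi_\lambda}$, lies in $\oline{\conv}^{w^*}(\cO_{\psi_\lambda})$. Note that Corollary~\ref{use-cor1} is unavailable here, since $\widetilde\pi_\lambda(\U(\cH))$ need not lie in $\U_\infty(\cH_\lambda)$, so the extremality analysis is genuinely nontrivial. I would approach it by projecting to the diagonal: the restriction to $T$ reproduces the weight computation $p_\ft(I_{\widetilde\pi_\lambda}) = \oline{\conv}^{w^*}(\cW\lambda) = \co(\lambda)$ (compare \eqref{eq:projmom}), whose extreme points are exactly $\cW\lambda$ by the Schur convexity input already used in Lemma~\ref{lem:1.2}. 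Combining this with $\U(\cH)$-invariance and the spectral description of $\cO_{\psi_\lambda}$ as the conjugates of $D_\lambda$, and invoking the convexity theory of momentum maps from \cite{Ne00} together with the appendix's analysis of the closures and extreme points of infinite Weyl-group orbits, I expect to pin the extreme points down to $\cO_{\psi_\lambda}$; Krein--Milman then upgrades this to $I_{\widetilde\pi_\lambda} = \oline{\conv}^{w^*}(\cO_{\psi_\lambda})$.
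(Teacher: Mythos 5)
Your parts (i) and (ii) follow essentially the paper's route: reduction to the $T$-finite root vectors $E_{jk}$ in a dominant order, annihilation of $v_\lambda$ by $\cN_\lambda^*$ and the within-block root vectors, and then the passage from the $\fp_\cF^*$-eigenline $\C v_\lambda$ to the equivariant holomorphic map into $\bP(\cH_\lambda^*)$ and the pullback bundle. One small omission in (i): a general $X\in\fp_\cF^*$ is not a finite linear combination of the $E_{jk}$, and $\gl(J,\C)$ is only dense in $\gl(\cH)$ for the strong operator topology, so you need the continuity of both sides of \eqref{eq:eigenvect} in that topology to conclude (the paper makes this step explicit). In (iii) you also skip the verification that the K\"ahler structure is \emph{strong}; this rests on the explicit estimate $\psi_\lambda([X,IX])=2\sum|z_{jk}|^2(\lambda_j-\lambda_k)\ge 2\sum|z_{jk}|^2$, which uses the integrality of $\lambda$.

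The genuine gap is the reverse inclusion $I_{\widetilde{\pi}_\lambda}\subeq\oline{\conv}^{w^*}(\cO_{\psi_\lambda})$, which you correctly single out as the crux but then only gesture at. Your sketch rests on two claims that are false. First, the extreme points of the weak-$*$-closed hull $\co(\lambda)=\oline{\conv}^{w^*}(\cW\lambda)$ are \emph{not} exactly $\cW\lambda$: the identity $\Ext=\cW\lambda$ holds only for the norm-closed hull $\co^\bn(\lambda)$ (Proposition~\ref{obw}), whereas $\Ext(\co(\lambda))$ contains all upper parts $w\lambda_F$ (Lemma~\ref{lem:extreme}); already for $\lambda=\eps_1$ one has $0\in\Ext(\co(\eps_1))\setminus\cW\eps_1$. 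The finite-dimensional fact \eqref{eq:weightext} used in Lemma~\ref{lem:1.2} concerns $\conv(\cW\lambda)$ before closure and does not transfer. Second, ``pinning the extreme points of $I_{\widetilde{\pi}_\lambda}$ down to $\cO_{\psi_\lambda}$'' is impossible: for $\lambda=\eps_1$ the set $I_{\widetilde{\pi}_\lambda}=-iS(B(\cH))\res_{\fu(\cH)}$ (Examples~\ref{ex_full}) has among its extreme points the singular pure states of $B(\cH)$, which do not lie in the orbit of vector states; only the \emph{norm-closed} momentum set has its extreme points equal to the orbit (that is the content of Theorem~\ref{flags2}(ii), proved later and with different tools). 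Finally, controlling the projection $p_\ft(I_{\widetilde{\pi}_\lambda})$ cannot determine $I_{\widetilde{\pi}_\lambda}$ itself: a conjugation-invariant subset of $\fu(\cH)'$ is recovered from its diagonal part only when it sits inside the predual $\Herm_1(\cH)$, which $I_{\widetilde{\pi}_\lambda}$ does not.

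What actually closes the argument in the paper is that the holomorphic realization of (ii) feeds directly into \cite[Thm.~5.11(c)]{Ne10}: for a bounded representation realized in holomorphic sections of an equivariant line bundle over the complex homogeneous space $\Gr_\cF$ arising from the eigenline $[\alpha_\lambda]$, the momentum set is automatically the weak-$*$-closed convex hull of the single coadjoint orbit through $\Phi_{\widetilde{\pi}_\lambda}([v_\lambda])$. (Equivalently, one can compare support functionals: $\sup\la I_{\widetilde{\pi}_\lambda},x\ra=\sup\spec(-i\dd\widetilde{\pi}_\lambda(x))=\sup\la\cO_{\psi_\lambda},x\ra$ for $x\in\fu(\cH)$, and weak-$*$-compact convex sets with equal support functionals coincide.) Without this input, or a substitute for it, your proof of (iii) does not go through.
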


\begin{prf} (i)  For $j,k \in J$, the operators 
$E_{jk} := \la \cdot, e_k\ra e_j$ are eigenvectors for the 
adjoint action of the diagonal subgroup 
$T \subeq \U_\infty(\cH)$ introduced above. The corresponding 
characters $t \mapsto t_j t_k^{-1}$ are pairwise different, so that the 
set of $T$-finite elements of $\gl(\cH)$ is the Lie 
subalgebra 
\[ \gl(J,\C) := \Spann \{ E_{jk} \: j,k \in J\}.\]  
Let $\leq$ denote a linear order on $J$ for which 
$\lambda$ is non-increasing and write 
$\Delta^+ := \{ \eps_j - \eps_k\: j < k\}$ for the 
corresponding positive system. 
Then $E_{jk}$, $j < k$, are positive root vectors  
and $v_\lambda \in \cH_\lambda$ is a highest weight 
vector for $\gl(J,\C)$ with respect to $\Delta^+$ 
(cf.~\cite[Prop.~II.1]{Ne98}). Hence 
$\dd\tilde\pi_\lambda^\C(E_{jk})v_\lambda = 0$ 
for $j < k$ and 
$\dd\tilde\pi_\lambda^\C(E_{jj})v_\lambda = \lambda_j v_\lambda$. 

The operator $E_{jk}$ is contained in $\fp_\cF^*$ 
if and only if $\lambda_j \geq \lambda_k$, which implies $j \leq k$. 
Hence the preceding observations prove (i) for $X \in \gl(J,\C) \cap 
\fp_\cF^*$. The general case now follows from the fact that both sides of 
\eqref{eq:eigenvect} are continuous with 
respect to the strong operator topology on $\gl(\cH)$ 
with respect to which $\gl(J,\C)$ is dense in 
$\gl(\cH)$. Actually $X = \sum_{j,k \in J} \la Xe_k, e_j\ra E_{jk}$ 
converges strongly. 

For $X \in \cN_\lambda$ we finally  obtain 
$\la \dd\tilde\pi_\lambda^\C(X)v_\lambda, v_\lambda \ra 
= \la v_\lambda, \dd\tilde\pi_\lambda^\C(X^*)v_\lambda \ra =0$. 

(ii) Since the subgroup $P_\lambda \subeq \GL(\cH)$ is connected, 
(i) implies that $v_\lambda$ is an eigenvector for the group  
\[ P_\lambda^* := \{ g^* \: g \in P_\lambda\} \] 
with Lie algebra $\fp_\cF = \cN_\lambda \oplus \{ D_\lambda\}'$.
Therefore (ii) follows from \cite[Thm.~5.11]{Ne10}. 

(iii) For every $X\in\fu(\cH)$ we have 
$ \Phi_{\widetilde{\pi}_\lambda}([v_\lambda])(X) 
=-i\la \dd\widetilde{\pi}_\lambda(X)v_\lambda,v_\lambda\ra $.  
The equality $\Phi_{\widetilde{\pi}_\lambda}([v_\lambda])=\psi_\lambda\vert_{\fu(\cH)}$ 
follows by (i) and 
$\cN_\lambda\oplus\{D_\lambda\}'\oplus\cN_\lambda^*=B(\cH)$. 

In view of (ii), the equality 
$I_{\widetilde{\pi}_\lambda}=\oline{\conv}^{w^*} (\cO_{\psi_\lambda})$ 
now follows from \cite[Thm.~5.11(c)]{Ne10}. 

As the momentum map $\Phi_{\pi_\lambda}$ is $\U(\cH)$-equivariant 
and the stabilizer of $\psi_\lambda$ is $\U(\cH)_\lambda$, 
the stabilizer of $[v_\lambda]$, resp., 
$[\alpha_{\lambda}]$ in $\U(\cH)$ coincides with $\U(\cH)_\lambda$, 
so that $\U(\cH)[v_\lambda] \cong \U(\cH)/\U(\cH)_\lambda 
\cong \cO_{\psi_\lambda}$. 

It remains to show that the complex structure on 
$\cO_{\psi_\lambda}$, together with the canonical symplectic form, 
defines a strong K\"ahler structure, i.e., 
that the corresponding quadratic form on $T_{\psi_\lambda}(\cO_{\psi_\lambda})$ 
is positive definite and defines a complete metric. 
For the complex structure $I$ on $T_{\psi_\lambda}(\cO_{\psi_\lambda}) 
\cong \cN_\lambda^*$ we obtain for 
$X = Z - Z^*$, 
$Z = \sum_{\lambda_j > \lambda_k} z_{jk} E_{jk} \in \cN_\lambda^*$: 
\begin{align*}
\psi_\lambda([X,IX])
&= \psi_\lambda([Z - Z^*, i(Z + Z^*)] 
= 2i \psi_\lambda([Z,Z^*]) = 2 \Tr(D_\lambda[Z,Z^*]) \\
&= 2 \sum_{\lambda_j > \lambda_k} |z_{jk}|^2 \Tr(D_\lambda [E_{jk}, E_{kj}]) 
= 2 \sum_{\lambda_j > \lambda_k} |z_{jk}|^2 (\lambda_j - \lambda_k) \\ 
&\geq 2\sum_{\lambda_j > \lambda_k} |z_{jk}|^2  \geq 0.   
\end{align*}
This shows that $T_{\psi_\lambda}(\cO_{\psi_\lambda})$ 
is complete with respect to the Hilbert norm \break $\Psi_\lambda([X,IX])^{1/2}$, 
and this means that the K\"ahler structure on 
$\cO_{\psi_\lambda}$ is strong. 
\end{prf}

In connection with Theorem~\ref{flags}, we note that a different description of the geometric realization can be obtained by the approach of \cite{BG08} 
that uses reproducing kernels on line bundles. 

\begin{rem} \mlabel{rem:2.7} 
Let $I_\lambda^2 \subeq \fu_2(\cH)' \cong \fu_2(\cH)$ denote the 
closure of $I_\lambda^\bn$ with respect to the Hilbert--Schmidt norm. 
Then $I_\lambda^2$ is a closed convex $\U(\cH)$-invariant subset 
of the Hilbert space $\fu_2(\cH)'$ containing 
$I_\lambda^\bn$. Since $\U(\cH)$ acts by isometries of the 
Hilbert--Schmidt norm and closed balls in a Hilbert space are strictly 
convex, the orbit $\cO_{\psi_\lambda}$ is contained 
in the set of extreme points of $I_\lambda^2$, and this leads immediately to 
\[ \cO_{\psi_\lambda} \subeq \Ext(I_\lambda^\bn).\] 
The same argument even implies that 
$\cO_{\psi_\lambda}$ consists of exposed points of $I_\lambda^\bn$ 
because each element of the 
orbit defines a norm continuous linear functional on $\fu_\infty(\cH)'$. 

On any bounded subset of $\fu_2(\cH)'$, the weak-$*$-topology 
coincides with the weak-$*$-topology defined by the space 
$F(\cH) \cap \fu(\cH)$ of skew-hermitian finite rank operators. 
Hence the weak-$*$-closure $I_\lambda$ of $I_\lambda^\bn$ in 
$\fu_2(\cH)'$ is also contained in $I_\lambda^2$, and we obtain 
with the same argument as above the stronger assertion  
\[ \cO_{\psi_\lambda} \subeq \Ext(I_\lambda).\] 

If $\mu \in I_\lambda\subeq I_\lambda^2$ 
satisfies $p_\ft(\mu) = \lambda$, 
then the fact that the projection onto diagonal operators 
is orthogonal in $\fu_2(\cH)'$ implies that 
$\|\mu\|_2 > \|\lambda\|_2$ whenever $\mu \not=\lambda$. 
On the other hand $\|\mu\|_2 \leq \|\lambda\|_2$ for every 
$\mu \in I_\lambda^2$, so that 
\[ p_\ft^{-1}(\psi_\lambda) \cap I_\lambda = \{\psi_\lambda\}.\] 
\end{rem}

\begin{problem}
%
(a) Does the group $G = \U_\infty(\cH)$ act transitively on 
the subset $\Phi_{\pi_\lambda}^{-1}(\Ext(I_\lambda))$ of 
$\bP(\cH_\lambda)$? 

(b) Does $\bP(\cH_\lambda)$ contain a unique complex $G$-orbit? 
This property is satisfied for irreducible representations 
of compact groups (cf.\ \cite[Ch.~XV]{Ne00}). 
Here Corollary~\ref{use-cor1} may be helpful.  
A natural first 
step may be to reduce the problem to $T$-eigenvectors 
by observing that  every $\U(\cH)$-orbit in $\bP(\cH_\lambda)$  
contains an element which is mapped into $\ft'$ by the momentum map. 
\end{problem}

%
%
%

\begin{rem}\label{proj}
 (a) For $X \in \Herm_1(\cH)$ we write 
$\Lambda^k(X)$ for the corresponding operator on 
$\Lambda^k(\cH)$, considered as a representation of the Lie 
algebra $\fS_1(\cH)$ (cf.\ \cite{Ne98}). If 
$X = \sum_j x_j E_{jj}$ is diagonal, then 
\[ s_k(X) := \supp\Spec(\Lambda^k(X)) = L_k((x_j)),\] 
so that the weak-$*$ lower semicontinuous convex function 
$s_k$ on $\Herm_1(\cH)$ is the unique $\U(\cH)$-invariant 
functional whose restriction to the diagonal is given by~$L_k$. 

Since the momentum set $I_\lambda$ 
is invariant under conjugation with 
$\U(\cH)$, it is determined by the diagonal operators it contains. 
We know already that 
\[ p_\ft(I_\lambda) = \co(\lambda) = I_{\pi_\lambda\res_T},\]  
the fact that $\lambda \in \ft'$ entails that 
$I_\lambda \cap \ft \supeq \co(\lambda).$  
We therefore obtain with Lemma~\ref{lem:convhul}(i)
\[ i I_\lambda 
= \{ X \in \Herm_1(\cH) \: 
(\forall k)\, 
s_k(X) \leq s_k(D_\lambda), 
s_k(-X) \leq s_k(-D_\lambda)\}.\] 

(b) For the norm-closed momentum set, we obtain the 
additional necessary 
condition $\tr(X) = \tr(D_\lambda) = \sum_j \lambda_j$. 
Since $p_\ft$ is a contraction with respect to the trace norm 
(it is the fixed point projection for the action of the compact 
group $\T^J$ of diagonal operators by conjugation), we find that 
\[ p_\ft(I_\lambda^\bn) \subeq I_{\pi_\lambda\res_T}^\bn.\] 
If a unit vector $v \in \cH_\lambda$ is written as a sum 
$v = \sum_\alpha v_\alpha$ of $T$-eigenvectors, we obtain 
\[ \Phi_{\pi_\lambda\res_T}([v]) 
= \sum_\alpha \|v_\alpha\|^2 \Phi_{\pi_\lambda\res_T}([v_\alpha]) 
= -i \sum_\alpha \|v_\alpha\|^2 \alpha.\] 
Since the weight set $\cP_\lambda$ is norm bounded, this series 
converges in norm, which leads to 
\[ I_{\pi_\lambda\res_T}^\bn = -i \co^\bn(\lambda).\] 
Taking into account that the right hand side is contained in $I_\lambda^\bn$, 
we obtain the equalities 
\[ p_\ft(I_\lambda^\bn) = I_{\pi_\lambda\res_T}^\bn = -i\co^\bn(\lambda) 
= I_\lambda^\bn \cap \ft. \] 
With Lemma~\ref{lem:convhul}(ii) we now see that 
\[ i I_\lambda^\bn  
= \{ X \in \Herm_1(\cH) \: 
(\forall k)\, 
s_k(\pm X) \leq s_k(\pm D_\lambda), \tr(X) = \tr(D_\lambda)\}\] 
because both sets have the same intersection with $\ft' \cong\ell^1(J,\R)$. 
\end{rem}

\begin{remark} 
In the same spirit, one finds an explicit description of the 
corresponding support functional 
\[ s_\lambda(X) := \sup(-i\dd\pi_\lambda(X)) 
= \sup\la I_\lambda, X \ra. \] 
These are invariant continuous, positively homogeneous convex 
functions, hence in particular determined by their 
values on diagonal operators, where one should try to find a more 
explicit formula. For the fundamental representation  
on $\Lambda^k(\cH)$ we have $s_\lambda = s_k$. 

Let $\leq$ be a linear order on $J$ for which $\lambda$ is non-increasing. 
If $X = \sum_j x_j E_{jj}$ is diagonal of finite rank, then $X$ 
is $\cW$-conjugate to an element representing a non-increasing function 
$J \to \R,j \mapsto x_j$. Then the relations 
\[ \cW\lambda \subeq \lambda - C_\lambda \quad \mbox{ and } \quad 
\la C_\lambda, X \ra \geq 0 \] 
imply that 
\[ \lambda(X) = \sup \la \cP_\lambda, X \ra = s_\lambda(X).\] 
Using the weak continuity of $s_\lambda$, this formula provides a constructive 
way to calculate $s_\lambda$ on every $X \in \fu(\cH)$ via approximation 
by finite rank operators. 
\end{remark}

\section{From $\U_\infty(\cH)$ to $C^*$-algebras}\label{Sect3}

In this section we describe the classes of unitary irreducible representations for the unitary groups of any unital $C^*$-algebra which occur in the Schur--Weyl decompositions 
of the tensor product representations 
(see Definition~\ref{the_repres}) and investigate their momentum sets. 
The information we obtain in Theorem~\ref{flags2} on momentum sets is sufficiently rich to allow us to conclude that these irreducible representations can be distinguished from each other by using their norm-closed momentum sets. 
Another important feature is that the extreme points of any of these momentum sets is a coadjoint orbit corresponding to the highest weight, just as in the case of 
finite dimensional Lie groups 
(see \cite{Ne00} and \cite{Wi92}). 

Throughout this section, $\cA$ denotes a unital $C^*$-algebra 
and $(\pi, \cH)$ is an irreducible $*$-representation of $\cA$. 

\begin{defn}\label{the_repres} 
For $\lambda \in \cP$, 
we write 
$(\tilde\pi_\lambda, \cH_\lambda)$ 
for  the unique unitary representation of $\U(\cH)$ 
extending the representation $\pi_\lambda$ of $\U_\infty(\cH)$ 
(\cite[Cor.~III.11 and Thm.~III.15]{Ne98}). 
This leads to a unitary representation 
\[ \pi_\lambda^{\cA}:= \tilde\pi_\lambda \circ \pi\vert_{\U(\cA)}
\colon \U(\cA)\to \U(\cH_\lambda).\] 
\end{defn}

From \cite[Thm.~2.8.3(iii)]{Dix64} we obtain: 
\begin{prop} \mlabel{prop:2.1} 
For each finite dimensional subspace 
$F \subeq \cH$ and each unitary operator 
$u \in \U(F)$, there exists a $g \in \U(\cA)$ with 
$\pi(g)\res_F  = u.$ In particular, $\pi(g)(F) \subeq F$. 
\end{prop}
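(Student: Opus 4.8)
The plan is to read this off the Kadison transitivity theorem, which is exactly the content of \cite[Thm.~2.8.3(iii)]{Dix64}, so the whole argument is a matter of arranging the data to match its hypotheses. First I would extend the given $u\in\U(F)$ to a unitary operator on all of $\cH$ by letting it act as the identity on the orthogonal complement: with respect to the decomposition $\cH = F \oplus F^\bot$ I set $U := u \oplus \id_{F^\bot} \in \U(\cH)$. This costs nothing but converts ``unitary on $F$'' into ``unitary on $\cH$'', which is the form in which the transitivity theorem wants its target data presented.

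Next I would fix an orthonormal basis $\xi_1,\dots,\xi_n$ of the finite dimensional subspace $F$ and put $\eta_i := u\xi_i = U\xi_i$ for $1\le i\le n$. Since the $\xi_i$ form an orthonormal family and the assignment $\xi_i \mapsto \eta_i$ is implemented by the unitary operator $U\in\U(\cH)$, the unitary form of the transitivity theorem (\cite[Thm.~2.8.3(iii)]{Dix64}) produces a unitary element $g$ with $\pi(g)\xi_i = \eta_i$ for every $i$; because $\cA$ is unital, $g$ may be taken in $\U(\cA)$ itself rather than merely in the unitization. Finally, as $\pi(g)$ and $u$ are linear maps agreeing on the basis $\xi_1,\dots,\xi_n$ of $F$, they agree on all of $F$, that is $\pi(g)\res_F = u$; and since $u(F)=F$ this yields in particular $\pi(g)(F) = F \subeq F$.

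I do not expect a real obstacle: the substance is entirely carried by the cited theorem, and the only points needing a little care are the passage to an orthonormal basis of $F$ (so that the hypothesis of the unitary version is literally met) and the observation that unitality of $\cA$ lets us land in $\U(\cA)$. Should one wish to invoke only the self-adjoint form of transitivity, an equally short route is available: write $u = \exp(iH_0)$ with $H_0 = H_0^*$ acting on $F$, extend it to the finite rank self-adjoint operator $H_0 \oplus 0$ on $\cH$, obtain a self-adjoint $b \in \cA$ with $\pi(b)\xi_i = H_0\xi_i$, note that this forces $\pi(b)(F)\subeq F$ with $\pi(b)\res_F = H_0$, and set $g := \exp(ib) \in \U(\cA)$, so that $\pi(g)\res_F = \exp(i\,\pi(b)\res_F) = \exp(iH_0) = u$.
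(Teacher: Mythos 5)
Your proof is correct and follows exactly the paper's route: the paper derives the proposition directly from Kadison's transitivity theorem in the form \cite[Thm.~2.8.3(iii)]{Dix64}, with no further argument given. Your write-up merely makes explicit the routine steps (extending $u$ to $u\oplus\id_{F^\bot}$, applying the theorem to an orthonormal basis of $F$, and using unitality of $\cA$ to land in $\U(\cA)$) that the paper leaves to the reader.
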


\begin{cor}\label{cor_dense}
The group $\pi(\U(\cA))$ is strongly dense in $\U(\cH)$. 
\end{cor}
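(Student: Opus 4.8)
The plan is to exploit Proposition~\ref{prop:2.1} directly, reducing the strong density to an \emph{exact} matching of finitely many vectors. Recall that a strong operator neighborhood of a given $u_0 \in \U(\cH)$ is determined by finitely many vectors $v_1, \dots, v_n \in \cH$ and some $\eps > 0$, consisting of those $u \in \U(\cH)$ with $\|u v_i - u_0 v_i\| < \eps$ for all $i$. Thus it suffices, given such data, to produce $g \in \U(\cA)$ with $\pi(g) v_i$ close to $u_0 v_i$; in fact I expect to achieve equality, so that $\eps$ plays no role.

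First I would set $V := \Spann\{v_1, \dots, v_n\}$ and enlarge it to the finite dimensional subspace $F := \Spann\{v_1, \dots, v_n, u_0 v_1, \dots, u_0 v_n\}$, which contains both the $v_i$ and their images under $u_0$. The restriction $u_0|_V$ is then an isometry of $V$ into $\cH$, and its range $u_0(V)$ lies in $F$ by construction, so $u_0|_V$ may be viewed as an isometry $V \to F$.

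Next I would extend $u_0|_V$ to a unitary operator $\tilde u$ of the finite dimensional Hilbert space $F$. Since $u_0$ is injective, $\dim u_0(V) = \dim V$, hence the orthogonal complements of $V$ and of $u_0(V)$ inside $F$ have equal dimension; choosing any unitary between these complements and combining it with $u_0|_V$ on $V$ yields a unitary $\tilde u \in \U(F)$ with $\tilde u|_V = u_0|_V$. Applying Proposition~\ref{prop:2.1} to $F$ and $\tilde u$ produces $g \in \U(\cA)$ with $\pi(g)|_F = \tilde u$, so that $\pi(g) v_i = \tilde u v_i = u_0 v_i$ for every $i$. Consequently $\pi(g)$ lies in the prescribed neighborhood of $u_0$, which proves the strong density.

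There is essentially no analytic obstacle here: once $F$ is fixed the entire argument is finite dimensional, and Proposition~\ref{prop:2.1} carries out all the $C^*$-algebraic work. The only point requiring a moment's care is the extension of the isometry $u_0|_V$ to a unitary of $F$, which rests solely on the dimension count for the orthogonal complements within $F$. I would emphasize that this construction yields exact agreement rather than mere $\eps$-approximation, reflecting the fact that the finite dimensional unitary groups $\U(F)$ are honestly realized inside $\pi(\U(\cA))$.
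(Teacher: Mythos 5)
Your argument is correct and is precisely the intended deduction: the paper states the corollary with no written proof, as it is meant to follow immediately from Proposition~\ref{prop:2.1} in exactly the way you describe. The reduction to a finite dimensional subspace $F$ containing the $v_i$ and $u_0v_i$, the extension of the isometry $u_0|_V$ to a unitary of $F$ by matching orthogonal complements, and the resulting exact agreement $\pi(g)v_i=u_0v_i$ are all sound.
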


\begin{thm}\mlabel{thm:1.3} 
For $\lambda \in \cP$, the representation $\pi_{\lambda}^\cA$ 
of $\U(\cA)$ is irreducible and two such representations 
$\pi_\lambda^\cA$ and $\pi_\mu^\cA$ are equivalent if and only if 
$\mu \in \cW\lambda$.
\end{thm}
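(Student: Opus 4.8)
The plan is to handle irreducibility and the equivalence criterion by separate arguments, using the strong density of $\pi(\U(\cA))$ in $\U(\cH)$ for the first and the momentum sets for the second. I would first record that $\tilde\pi_\lambda$ is already an irreducible representation of the full group $\U(\cH)$: its restriction to $\U_\infty(\cH)$ is the irreducible representation $\pi_\lambda$, so the commutant of $\tilde\pi_\lambda(\U(\cH))$ is trivial. Next, since $\pi_\lambda\cong\pi_{\lambda_+}\otimes\pi_{\lambda_-}^*$ is realized inside a tensor product $\cH^{\otimes n}\otimes(\cH^*)^{\otimes m}$ (Theorem~\ref{thm:parti}(iii) and the discussion of realizations in tensor products), the map $\tilde\pi_\lambda$ is continuous from $\U(\cH)$ with the strong operator topology, restricted to bounded sets, into $\U(\cH_\lambda)$ with the strong operator topology, because tensoring and compression to an invariant subspace are strongly continuous on bounded sets.

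By Corollary~\ref{cor_dense}, every $\tilde\pi_\lambda(u)$ with $u\in\U(\cH)$ is therefore a strong limit of a norm-bounded net from $\pi_\lambda^\cA(\U(\cA))$, so any operator commuting with $\pi_\lambda^\cA(\U(\cA))$ commutes with all of $\tilde\pi_\lambda(\U(\cH))$ and is consequently scalar; thus $\pi_\lambda^\cA$ is irreducible. The ``if'' part of the equivalence criterion is then immediate: if $\mu\in\cW\lambda$ then $\pi_\mu\cong\pi_\lambda$ by Theorem~\ref{thm:parti}(i), hence $\tilde\pi_\mu\cong\tilde\pi_\lambda$ by the uniqueness in Definition~\ref{the_repres}, and composing with $\pi\vert_{\U(\cA)}$ gives $\pi_\mu^\cA\cong\pi_\lambda^\cA$.

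For the converse I would argue through momentum sets. Writing $\gamma:=\pi\vert_{\U(\cA)}\colon\U(\cA)\to\U(\cH)$, we have $\pi_\lambda^\cA=\tilde\pi_\lambda\circ\gamma$ and, by the momentum-map form of Proposition~\ref{pullback}, $\Phi_{\pi_\lambda^\cA}=\dd\gamma'\circ\Phi_{\tilde\pi_\lambda}$, where $\dd\gamma'\colon\fu(\cH)'\to\fu(\cA)'$ is the adjoint of $\dd\gamma=\dd\pi\vert_{\fu(\cA)}$. Since $\tilde\pi_\lambda$ is a normal representation of $\U(\cH)$ (again because it lives in tensor products), Lemma~\ref{dual}(i) places its norm-closed momentum set $I^\bn_{\tilde\pi_\lambda}$ inside the predual $\fu(\cH)_*=i\Herm_1(\cH)$, where by Remark~\ref{alternative} every functional is $\psi_D\colon X\mapsto-i\tr(DX)$ with $D\in\Herm_1(\cH)$.

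The hard part will be showing that $\dd\gamma'$ loses no information on this predual, so that equality of the momentum sets in $\fu(\cA)'$ forces equality in $\fu(\cH)_*$. Here I would use that $\pi$ is irreducible, hence $\pi(\cA)$ is weak-$*$-dense in $B(\cH)$ and, by Kaplansky's density theorem (as in the proof of Proposition~\ref{dual_appl}), the unit ball of $\pi(\fu(\cA))$ is weak-$*$-dense in that of $\fu(\cH)$. As $X\mapsto\tr(DX)$ is weak-$*$-continuous for $D\in\Herm_1(\cH)$, this density yields $\Vert\dd\gamma'(\psi_D)\Vert=\Vert D\Vert_1=\Vert\psi_D\Vert$, so $\dd\gamma'$ restricts to an isometry of $\fu(\cH)_*$ onto a closed subspace of $\fu(\cA)'$; in particular it is injective there and commutes with the formation of norm-closed convex hulls, giving $I^\bn_{\pi_\lambda^\cA}=\dd\gamma'\big(I^\bn_{\tilde\pi_\lambda}\big)$. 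Assuming $\pi_\lambda^\cA\cong\pi_\mu^\cA$, we then get $I^\bn_{\pi_\lambda^\cA}=I^\bn_{\pi_\mu^\cA}$, and injectivity gives $I^\bn_{\tilde\pi_\lambda}=I^\bn_{\tilde\pi_\mu}$ as subsets of $i\Herm_1(\cH)\cong\fu_\infty(\cH)'$; passing to weak-$*$-closures identifies these with the momentum sets $I_\lambda$ and $I_\mu$ of $\pi_\lambda,\pi_\mu$ on $\U_\infty(\cH)$, so $I_\lambda=I_\mu$, whence Proposition~\ref{prop:mostbasic} with Theorem~\ref{thm:parti}(i) forces $\mu\in\cW\lambda$. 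The main obstacle is precisely this isometry/injectivity step; once the relevant functionals are identified with trace-class operators and the Kaplansky argument is in place, the remaining reductions are formal.
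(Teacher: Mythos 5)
Your proof is correct, and the irreducibility argument together with the ``if'' direction coincides with the paper's: strong continuity of the tensor-product representation plus strong density of $\pi(\U(\cA))$ in $\U(\cH)$ (Corollary~\ref{cor_dense}) gives equality of commutants. Where you genuinely diverge is the ``only if'' direction. The paper disposes of it in one line by pushing the same density argument one step further: $\pi_\lambda^\cA(\U(\cA))$ and $\pi_\mu^\cA(\U(\cA))$ are strongly dense in $\tilde\pi_\lambda(\U(\cH))$ and $\tilde\pi_\mu(\U(\cH))$, so the intertwiner spaces satisfy $B_{\U(\cA)}(\cH_\lambda,\cH_\mu)=B_{\U(\cH)}(\cH_\lambda,\cH_\mu)$, whence $\pi_\lambda^\cA\cong\pi_\mu^\cA$ is equivalent to $\pi_\lambda\cong\pi_\mu$ and Theorem~\ref{thm:parti}(i) finishes. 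Your route instead goes through momentum sets: you show $\dd\gamma'$ is isometric on $\fu(\cH)_*=i\Herm_1(\cH)$ via Kaplansky density, transport $I^\bn_{\pi_\lambda^\cA}$ back to $\fu_\infty(\cH)'$, and invoke Proposition~\ref{prop:mostbasic}, hence ultimately the Schur-convexity argument of Lemma~\ref{lem:1.2}. This is logically sound and non-circular (Proposition~\ref{prop:mostbasic} does not depend on the present theorem), and it has the virtue of essentially pre-proving the separation statements of Theorem~\ref{flags2}(iii) and Proposition~\ref{type_I}; but it is considerably heavier machinery than needed, since the intertwiner-space observation already gives both directions for free. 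One detail you should make explicit in the isometry step: the norm of $\dd\gamma'(\psi_D)$ is a supremum over the unit ball of $\fu(\cA)$, not of $\pi(\fu(\cA))$, so before applying Kaplansky you need that the quotient $*$-morphism $\cA\to\pi(\cA)\cong\cA/\ker\pi$ maps the open unit ball onto the open unit ball (compatibly with skew-adjoint parts); this is standard but it is the hinge on which your $\Vert\dd\gamma'(\psi_D)\Vert=\Vert D\Vert_1$ turns.
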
 

\begin{prf} Recall $n_\lambda = \sum_{j \in J}\lambda_j$. 
Since the representation of 
$\U(\cH)$ on 
\[ \cH^{\otimes n_{\lambda_+}} \otimes (\cH^*)^{\otimes n_{\lambda_-}} 
\supeq \cH_\lambda \] 
(Theorem~\ref{thm:parti}) 
is continuous with respect to the strong operator topology 
on $\U(\cH)$, the subgroup $\pi_\lambda^\cA(\U(\cA))$ 
is strongly dense in $\pi_\lambda(\U(\cH))$ (Corollary~\ref{cor_dense}), 
hence has the same commutant. 
Therefore $(\pi_\lambda^\cA, \cH_\lambda)$ is irreducible. 

The same argument implies that the representations 
$\pi_\lambda$ and $\pi_\mu$ of $\U(\cH)$ and the 
corresponding representations $\pi_\lambda^\cA$ and $\pi_\mu^\cA$ 
of $\U(\cA)$ define the same set 
$B_{\U(\cA)}(\cH_\lambda, \cH_\mu) = B_{\U(\cH)}(\cH_\lambda, \cH_\mu)$ 
of intertwining operators. We conclude that 
$\pi_\lambda^\cA \cong \pi_\mu^\cA$ is equivalent to 
$\pi_\lambda \cong \pi_\mu$, which corresponds to 
$\mu \in \cW\lambda$ (Theorem~\ref{thm:parti}). 
\end{prf}


The main point of the following theorem is that 
the representations $\pi^\cA_\lambda$ 
can be distinguished from each other by their norm-closed momentum sets. 

\begin{thm}\label{flags2} 
Let $(\pi, \cH)$ be an irreducible representation of the 
unital $C^*$-algebra $\cA$ on $\cH \cong \ell^2(J,\C)$ 
and $\lambda \in \cP$. 
Then the following assertions hold for the 
unitary representation $\pi_\lambda^\cA$ of $\U(\cA)$: 
\begin{description}
\item[\rm(i)] The unitary subgroup $\pi(\U(\cA))$ of $\U(\cH)$ 
acts transitively on the coadjoint orbit $\cO_{\psi_\lambda}$. 
Let $\psi_\lambda^{\cA}:=\psi_\lambda\circ\pi$ 
and write $\cO_{\psi_\lambda^{\cA}}$ for the coadjoint $\U(\cA)$-orbit 
of $\psi_\lambda^{\cA}\vert_{\fu(\cA)}$. 
Then the momentum set of $\pi_\lambda^\cA$ is given by 
\[ \Phi_{\pi_\lambda^{\cA}}([v_\lambda])=\psi_\lambda^{\cA}\vert_{\fu(\cA)} 
\quad\text{ and }\quad
I_{\pi_\lambda^{\cA}}=\oline{\conv}^{w^*} \big(\cO_{\psi_\lambda^{\cA}}\big).\] 
\item[\rm(ii)] The coadjoint orbit $\cO_{\psi_\lambda^{\cA}}$ is norm closed in $\fu(\cA)'$ and is contained in the sphere centered at $0$ with radius $\Vert\lambda\Vert_1$. 
Moreover we have 
\begin{equation}\label{extreme}
\cO_{\psi_\lambda^{\cA}}=\Inm_{\pi_\lambda^{\cA}}\cap\Ext(I_{\pi_\lambda^{\cA}})=\Ext(\Inm_{\pi_\lambda^{\cA}}).
\end{equation} 
\item[\rm(iii)] For two such representations $\pi_\lambda^\cA$ 
and $\pi_\mu^\cA$, we have 
$$\Inm_{\pi_\lambda}=\Inm_{\pi_\mu}
\iff
\cO_{\psi_\lambda^{\cA}}=\cO_{\psi_\mu^{\cA}}
\iff 
\cW \lambda= \cW \mu
\iff \pi_\lambda^{\cA}\simeq \pi_\mu^{\cA}.$$
\end{description}
\end{thm}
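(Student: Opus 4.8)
The plan is to prove (i)--(iii) in order, transporting the geometric picture of Theorem~\ref{flags} for $\U(\cH)$ acting on $\cH_\lambda$ down to $\U(\cA)$ along the restriction map $r\colon\fu(\cH)'\to\fu(\cA)'$, $\phi\mapsto\phi\circ\dd\pi$, and then reading off equivalence of representations from the extreme points of the norm-closed momentum set.

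For (i), I would first establish transitivity of $\pi(\U(\cA))$ on $\cO_{\psi_\lambda}$. By Theorem~\ref{flags}(iii) this orbit is $\Gr_\cF$, and two of its points are flags whose defining eigenspace decompositions $\cH=\bigoplus_n\cH_n$ and $\bigoplus_n\cH_n'$ have matching dimensions in each slot; since only finitely many eigenspaces are nonzero and only $\cH_0$ is infinite dimensional, the moving data live in a finite dimensional subspace $F\subeq\cH$, and Proposition~\ref{prop:2.1} supplies $g\in\U(\cA)$ with $\pi(g)$ carrying one flag to the other. The two momentum formulas then follow from Proposition~\ref{pullback}: since $\dd\pi_\lambda^\cA=\dd\tilde\pi_\lambda\circ\dd\pi$, we get $\Phi_{\pi_\lambda^\cA}([v_\lambda])=\Phi_{\tilde\pi_\lambda}([v_\lambda])\circ\dd\pi=\psi_\lambda^{\cA}\vert_{\fu(\cA)}$ by Theorem~\ref{flags}(iii), and $I_{\pi_\lambda^\cA}=r(I_{\tilde\pi_\lambda})=r\big(\oline{\conv}^{w^*}(\cO_{\psi_\lambda})\big)$. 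As $r$ is affine and weak-$*$-continuous and $I_{\tilde\pi_\lambda}$ is weak-$*$-compact, this image is the weak-$*$-closed convex hull of $r(\cO_{\psi_\lambda})$; transitivity is precisely what identifies $r(\cO_{\psi_\lambda})$ with the $\U(\cA)$-orbit $\cO_{\psi_\lambda^{\cA}}$, yielding $I_{\pi_\lambda^\cA}=\oline{\conv}^{w^*}(\cO_{\psi_\lambda^{\cA}})$.

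For (ii), the radius claim holds because the coadjoint action of $\U(\cA)$ is isometric, so every orbit point has norm $\Vert\psi_\lambda^{\cA}\vert_{\fu(\cA)}\Vert$; using that $\pi(\{X\in\fu(\cA):\Vert X\Vert\le1\})$ is strongly dense in the unit ball of $\fu(\cH)$ (Proposition~\ref{prop:2.1}) and that $D_\lambda$ is finite rank, so $\tr(D_\lambda\,\cdot\,)$ is strongly continuous on bounded sets, this norm equals $\Vert D_\lambda\Vert_1=\Vert\lambda\Vert_1$. For norm-closedness I would identify $\cO_{\psi_\lambda^{\cA}}$ with the conjugation orbit $\{\pi(g)D_\lambda\pi(g)^*:g\in\U(\cA)\}$ of the fixed finite-rank operator $D_\lambda$ --- the correspondence $\psi_D\circ\pi\mapsto D$ being injective because $\pi$ is irreducible, so $\pi(\cA)''=B(\cH)$ and $\tr(D\,\cdot\,)$ is weak-$*$-continuous --- and invoke the appendix results on topological properties of such orbits. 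The heart of (ii) is the identity \eqref{extreme}, which I expect to be the main obstacle.

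For \eqref{extreme} I would argue in three moves. First, $\cO_{\psi_\lambda^{\cA}}\subeq\Ext(I_{\pi_\lambda^\cA})$ by exhibiting each orbit point as an exposed point: via Proposition~\ref{prop:2.1} choose $X_0\in\fu(\cA)$ with $\pi(X_0)$ a regular diagonal operator, so that the support-functional computation in the spirit of the remark following Remark~\ref{proj} shows $\psi_\lambda^{\cA}\vert_{\fu(\cA)}$ is the unique maximizer of $\la\,\cdot\,,X_0\ra$ on $I_{\pi_\lambda^\cA}$, and equivariance then exposes the whole orbit (this is the $C^*$-analog of Remark~\ref{rem:2.7}). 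Second, any extreme point lying in the norm-closed set $\Inm_{\pi_\lambda^{\cA}}$ is a normal pure vector state, hence of the form $\Phi_{\pi_\lambda^\cA}([w])\in\im(\Phi_{\pi_\lambda^\cA})$ (this is where the identification of $\Inm$ with the normal state space enters, playing the role of Corollary~\ref{use-cor1}); highest weight theory then forces $[w]$ to be $\U(\cA)$-conjugate to $[v_\lambda]$, giving both $\Inm_{\pi_\lambda^{\cA}}\cap\Ext(I_{\pi_\lambda^\cA})\subeq\cO_{\psi_\lambda^{\cA}}$ and $\Ext(\Inm_{\pi_\lambda^{\cA}})\subeq\cO_{\psi_\lambda^{\cA}}$. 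Third, the reverse inclusions follow since $\cO_{\psi_\lambda^{\cA}}\subeq\im(\Phi_{\pi_\lambda^\cA})\subeq\Inm_{\pi_\lambda^{\cA}}\subeq I_{\pi_\lambda^\cA}$, so a point of the orbit that is extreme in $I_{\pi_\lambda^\cA}$ is a fortiori extreme in the smaller convex set $\Inm_{\pi_\lambda^{\cA}}$.

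Part (iii) is then formal, via a cycle of implications. By \eqref{extreme}, $\Inm_{\pi_\lambda^{\cA}}=\Inm_{\pi_\mu^{\cA}}$ forces $\Ext(\Inm_{\pi_\lambda^{\cA}})=\Ext(\Inm_{\pi_\mu^{\cA}})$, i.e.\ $\cO_{\psi_\lambda^{\cA}}=\cO_{\psi_\mu^{\cA}}$. From the latter, $\psi_\mu^{\cA}\vert_{\fu(\cA)}=\psi_{\pi(g)D_\lambda\pi(g)^*}\circ\pi$ for some $g\in\U(\cA)$, and the injectivity of $D\mapsto\psi_D\circ\pi$ gives $\pi(g)D_\lambda\pi(g)^*=D_\mu$, so $D_\lambda$ and $D_\mu$ are unitarily equivalent finite-rank hermitian operators and therefore $\cW\lambda=\cW\mu$. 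This last condition is equivalent to $\pi_\lambda^{\cA}\simeq\pi_\mu^{\cA}$ by Theorem~\ref{thm:1.3}, and equivalent representations manifestly have equal norm-closed momentum sets, which closes the cycle and establishes all four equivalences.
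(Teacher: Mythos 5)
Your treatment of (i), of the norm-closedness and sphere claims in (ii), and of the cycle of implications in (iii) follows the paper's route closely and is sound. The genuine gap is in your second move for \eqref{extreme}. You assert that an extreme point of $I_{\pi_\lambda^{\cA}}$ (or of $\Inm_{\pi_\lambda^{\cA}}$) lying in the norm-closed momentum set must be of the form $\Phi_{\pi_\lambda^{\cA}}([w])$, and that ``highest weight theory then forces $[w]$ to be $\U(\cA)$-conjugate to $[v_\lambda]$.'' Neither step is available. For the first: Corollary~\ref{use-cor1} requires the representation to take values in $\U_\infty(\cH_\lambda)$, which $\pi_\lambda^{\cA}$ does not when $\dim\cH_\lambda=\infty$, and the identification of $\Inm$ with a normal state space is only established for restrictions of algebra representations, which $\pi_\lambda^{\cA}$ is not for general $\lambda$; Milman-type results only place $\Ext$ of the \emph{weak-$*$-compact} hull inside the weak-$*$-closure of $\im(\Phi)$, not the norm closure, so extremality in $\Inm$ does not by itself produce a vector state. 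For the second: the claim that $\Phi_{\pi_\lambda}([w])\in\Ext(I_\lambda)$ forces $[w]$ into the orbit of the highest weight ray is precisely Problem (a) of Section~\ref{Sect2}, which the authors explicitly leave open. Your argument therefore rests on an unproved (indeed, open) statement.

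The paper circumvents this entirely by transporting \eqref{extreme} to $\Herm_1(\cH)$ via the isometric isomorphism of Proposition~\ref{dual_appl}, reducing to $\cO_{D_\lambda}=I_\lambda^\bn\cap\Ext(I_\lambda)=\Ext(I_\lambda^\bn)$. The inclusion $\cO_{D_\lambda}\subeq\Ext(I_\lambda)\cap\Ext(I_\lambda^\bn)$ comes from the Hilbert--Schmidt strict-convexity argument of Remark~\ref{rem:2.7} (which you could also have used in place of your exposed-point construction; note that choosing $X_0\in\fu(\cA)$ with $\pi(X_0)$ a prescribed \emph{diagonal} operator is not something Proposition~\ref{prop:2.1} delivers, since Kadison transitivity only controls $\pi(X_0)$ on a finite dimensional subspace). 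For the hard inclusion $\Ext(I_\lambda^\bn)\subeq\cO_{D_\lambda}$, the paper diagonalizes the extreme point $X$ by a unitary, observes $X\in\Ext(I_\lambda^\bn\cap\ft)$, identifies $I_\lambda^\bn\cap\ft$ with $-i\co^\bn(\lambda)$ via Remark~\ref{proj}, and then invokes the combinatorial result $\Ext(\co^\bn(\lambda))=\cW\lambda$ of Proposition~\ref{obw}. This appendix material on Weyl group orbits is the actual engine of the proof, and it is absent from your proposal; you would need to supply it (or an equivalent) to close the argument.
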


\begin{prf} (i) In the notation of the proof of Theorem~\ref{flags}, 
the coadjoint orbit $\cO_{\psi_\lambda} \cong \U(\cH)/\U(\cH)_\lambda$ 
can be identified with the flag manifold $\Gr_\cF$. 
It therefore suffices to show that $\pi(\U(\cA))$ 
acts transitively on $\Gr_\cF$. 
An application of Proposition~\ref{prop:2.1} (see also \cite[Thm.~1]{GK60}) 
shows that for $u\in \U(\cH)$ 
there exists $g\in\pi(\U(\cA))$ such that $g=u$ on the finite 
dimensional subspaces 
\[ F_{-N} \subeq \cdots \subeq F_{-1}, \quad 
F_0^\bot \supeq F_1^\bot \supeq \ldots \supeq F_N^\bot. \] 
Since both $g$ and $u$ are unitary operators, it then follows that 
$g\cF = u\cF$. 

To prove the second part of the assertion, we first note that 
the adjoint map 
$(\pi\vert_{\fu(\cA)})'\colon\fu(\cH)'\to\fu(\cA)'$ 
satisfies 
$(\pi\vert_{\fu(\cA)})'I_{\widetilde{\pi}_\lambda}=I_{\pi_\lambda^{\cA}}$ 
(Proposition~\ref{pullback}). 
It then follows by Theorem~\ref{flags}(ii) that 
\begin{equation}\label{eq:convhull}
I_{\pi_\lambda^{\cA}}=(\pi\vert_{\fu(\cA)})'\oline{\conv}^{w^*}
\big(\cO_{\psi_\lambda}\big) 
\subseteq \oline{\conv}^{w^*}
\big((\pi\vert_{\fu(\cA)})' (\cO_{\psi_\lambda})\big).
\end{equation}
On the other hand, we have seen above that 
$\pi(\U(\cA))$ acts transitively on $\cO_{\psi_\lambda}$, 
and then it is straightforward to check that 
$(\pi\vert_{\fu(\cA)})' \cO_{\psi_\lambda}=\cO_{\psi_\lambda^{\cA}}$. 
Therefore 
$I_{\pi_\lambda^{\cA}}\subseteq
\oline{\conv}^{w^*}\big(\cO_{\psi_\lambda^{\cA}}\big)$. 

For the converse inclusion note that the equality 
$\pi_\lambda^{\cA}=\widetilde{\pi}_\lambda\circ\pi\vert_{\U(\cA)}$ implies 
$\Phi_{\pi_\lambda^{\cA}}=(\pi\vert_{\fu(\cA)})'\circ \Phi_{\widetilde{\pi}_\lambda}$. 
Since (i) ensures that $\Phi_{\widetilde{\pi}_\lambda}([v_\lambda])=\psi_\lambda\vert_{\fu(\cH)}$, 
it then follows that $\Phi_{\pi_\lambda^{\cA}}([v_\lambda])=\psi_\lambda^{\cA}\vert_{\fu(\cA)}$. 
Now, by using the $\U(\cA)$-equivariance property of the momentum map 
$\Phi_{\pi_\lambda^{\cA}}$, we get the converse inequality 
$I_{\pi_\lambda^{\cA}}\supeq\oline{\conv}^{w^*}(\cO_{\psi_\lambda^{\cA}})$, 
and this proves (i). 

(ii) The natural isometric isomorphism 
$\fu(\cH)_*\simeq\Herm_1(\cH)$ takes $\cO_{\psi_\lambda}$ 
onto the unitary equivalence $\U(\cH)$-orbit $\cO_{D_\lambda}$ of the finite-rank operator $D_\lambda$. 
Since $D_\lambda$ is a self-adjoint finite-rank operator, 
it generates a finite dimensional $C^*$-algebra, and then it is well known that $\cO_{D_\lambda}$ is norm-closed in $B(\cH)$; 
see for instance \cite[Prop.~2.4]{Vo76}. 
Then $\cO_{D_\lambda}$ is in particular a closed subset of the Banach space 
$\Herm_1(\cH)$. 
(In the case $\lambda\ge0$, this was also noted in \cite[Prop.~3.1(iv)]{AK06}.)
Also note that $\cO_{D_\lambda}$ coincides with the unitary equivalence $\U(\cA)$-orbit of the finite-rank operator $D_\lambda$ by the transitivity theorem \cite[Thm.~1]{GK60}. 

Now apply Proposition~\ref{dual_appl} for $\cM=B(\cH)$ and the normal representation $\widetilde{\pi}_\lambda\colon\U(\cH)\to\U(\cH_\lambda)$. 
It then follows that the $\U(\cA)$-orbit $\cO_{\psi_\lambda^{\cA}}$ 
is the image of the norm-closed set $\cO_{D_\lambda}$ 
by an isometric isomorphism $R_{\fu(\cA)}\colon\fu(\cM)_*\to\fu(\cA)^*$, 
and $\fu(\cA)^*$ is a closed linear subspace of the topological dual $\fu(\cA)'$. 
Therefore $\cO_{\psi_\lambda^{\cA}}$ is norm closed in $\fu(\cA)'$. 
The assertion on the sphere containing $\cO_{\psi_\lambda^{\cA}}$ follows since the aforementioned isometric isomorphism takes $\psi_\lambda^{\cA}$ 
to $D_\lambda$ and $\Vert D_\lambda\Vert_1=\Vert\lambda\Vert_1$. 

In order to prove \eqref{extreme}, note that, due to Proposition~\ref{dual_appl} and to the isometric isomorphism used above, it actually suffices to show that 
\begin{equation}\label{extreme1}
\cO_{D_\lambda}=I_\lambda^\bn\cap\Ext(I_\lambda)=\Ext(I_\lambda^\bn).
\end{equation}
where we use the notation $I_\lambda=I_{\pi_\lambda}$ 
and $I_\lambda^\bn=\Inm_{\pi_\lambda}=\Inm_{\widetilde{\pi}_\lambda}
\subset-i\Herm_1(\cH)$ as in Remark~\ref{proj}. 
The inclusion $\cO_{D_\lambda}\subeq I_\lambda^\bn\cap\Ext(I_\lambda)$ 
follows by Remark~\ref{rem:2.7} and it is clear that 
$I_\lambda^\bn\cap\Ext(I_\lambda)\subeq\Ext(I_\lambda^\bn)$. 
To prove that $\Ext(I_\lambda^\bn)\subeq\cO_{D_\lambda}$, let $X\in \Ext(I_\lambda^\bn)$ arbitrary. 
Since the set $\Ext(I_\lambda^\bn)$ is naturally acted on by the full unitary group $\U(\cH)$ and $X\in\Herm_1(\cH)$, we may assume that 
$X\in I_\lambda^\bn\cap\ft$, hence 
$X\in\Ext(I_\lambda^\bn\cap\ft)$. 
Then Remark~\ref{proj} shows that $iX\in\Ext(\co^\bn(\lambda))$, 
and now $X\in\cO_{D_\lambda}$ as a consequence of Proposition~\ref{obw}.

(iii) We know from Theorem~\ref{thm:1.3} that 
the representations $\pi_\lambda^{\cA}$ and 
$\pi_\mu^{\cA}$ are equivalent if and only if $\cW\lambda=\cW\mu$, 
and if this is the case, then $\cO_{\psi_\lambda^{\cA}}=\cO_{\psi_\mu^{\cA}}$. 
Now let us assume that the latter equality of coadjoint $\U(\cA)$-orbits holds. 
Then there exists $u\in \U(\cA)$ such that 
$\psi_\lambda=\psi_\mu\circ\Ad_{\fu(\cH)}(\pi(u))$ on $\pi(\fu(\cA))$. 
Since $\pi(\cA)$ is dense in $B(\cH)$ in the weak operator topology 
and both sides of the above equality are continuous with respect to 
this topology, it follows at once that $\psi_\lambda=\psi_\mu\circ\Ad_{\fu(\cH)}(\pi(u))$ on $B(\cH)$, 
which leads to $D_\lambda=\pi(u)^{-1}D_\mu\pi(u)$. 
Both $D_\lambda$ and $D_\mu$ are self-adjoint diagonal operators, 
with the spectra 
(including the spectral multiplicities) described by the functions 
$\lambda, \mu \colon J\to\Z$. That $D_\lambda$ and $D_\mu$ are conjugate
implies that, for every $n \in \Z$, 
we have $|\lambda^{-1}(n)| = |\mu^{-1}(n)|$, 
and since these numbers are finite for $n \not=0$, 
we obtain $\mu \in \cW\lambda$ 
(cf.\ Theorem~\ref{thm:parti}). 

Finally, if we have  
$\Inm_{\pi_\lambda^{\cA}}=\Inm_{\pi_\mu^{\cA}}$, then (ii) above 
shows that $\cO_{\psi_\lambda^{\cA}}=\cO_{\psi_\mu^{\cA}}$, and this completes the proof.
\end{prf}

\begin{rem} If $\dim \cH = n < \infty$ and $(\pi, \cH)$ is a finite 
dimensional representation of $\cA$, then 
$\pi(\cA) = B(\cH) = K(\cH)$, so that the results of this 
section are trivial consequences of the corresponding ones 
for irreducible unitary representations of $\U_n(\C)$. 
\end{rem}

\begin{rem} An irreducible representation 
$(\pi, \cH)$ of the $C^*$-algebra can be obtained with the 
GNS construction from any pure state $\phi$ of the form 
\[ \phi(A)  = \la \pi(A)v,v\ra, \quad \|v\| = 1, v \in \cH\] 
and since $\pi(\U(\cA))$ acts transitively on $\bP(\cH)$, 
the pure states defining equivalent representations form 
a $\U(\cA)$-orbit in the state space $S(\cA)$. The 
map 
\[ \bP(\cH) \to S(\cA),\quad 
[v] \mapsto \phi_v, \quad \phi_v(A) 
:= \frac{\la \pi(A)v,v\ra}{\la v,v\ra} \] 
is injective because $\pi(\cA) \subeq B(\cH)$ is dense in the 
weak operator topology. 

It is instructive to describe the functional 
$i\psi_\lambda^\cA$ in $C^*$-algebraic terms.  
To obtain such a description, we call an $n$-tupel 
$(\phi_1,\ldots, \phi_n) \in \U(\cA)\phi \subeq S(\cA)$ 
{\it orthogonal} if 
$\phi_j = \phi_{v_j}$ for pairwise orthogonal elements 
$v_1,\ldots, v_n \in \cH$. 
Note that Proposition~\ref{prop:2.1} implies that 
$\U(\cA)$ acts transitively on the orthogonal 
$n$-tupels in the orbit $\U(\cA)
\phi$.
\begin{footnote}
  {For any orthogonal $n$-tuple of states, the functional 
$\phi := \frac{1}{n}(\phi_1 + \cdots + \phi_n)$ also is a state. 
It corresponds to the element $(v_1,\ldots, v_n) \in \cH^n$, which is a 
cyclic vector for the natural representation of $\cA$ on 
$\cH^n$ (\cite[Thm.~2.8.3(iii)]{Dix64}; Proposition~\ref{prop:2.1}).}
\end{footnote}

Writing 
$\lambda = \sum_{j \in J} \lambda_j \eps_j$ 
as a finite sum, where $\eps_j(k) = \delta_{jk}$,  
the vectors $\{ e_j \: j \in \supp(\lambda)\}$ 
form an orthonormal set corresponding to the 
orthogonal states $\phi_j$, $j \in\supp(\lambda)$.  We 
now have 
\[ i\psi_\lambda^\cA(A) 
= \trace(D_\lambda \pi(A)) 
= \sum_{j \in J} \lambda_j \la \pi(A)e_j, e_j\ra 
= \sum_{j \in J} \lambda_j \phi_j(A),\] 
so that 
\[ \psi_\lambda^\cA = -i \sum_{j \in \supp(\lambda)} \lambda_j \phi_j\]  
is an intrinsic description of $\psi_\lambda^\cA$ in terms 
of orthogonal states in $\U(\cA)\phi$. 
Fixing a bijection $\gamma \: \{ 1,\ldots, N\} \to \supp(\lambda)$, 
it now follows that the coadjoint 
orbit $\cO_{\psi_\lambda^\cA}$ consists of the restrictions of 
all functional of the form 
\[ -i \sum_{j = 1}^N \lambda_{\gamma(j)} \phi_j, \] 
where $(\phi_1, \ldots, \phi_N)$ is an orthogonal 
$N$-tupel in $\U(\cA)\phi$. 

For the particular case where 
$\lambda_j \in \{0,1\}$, we have $\cH_\lambda \cong \Lambda^N(\cH)$ 
and the elements of $\cO_{\psi_\lambda^\cA}$ correspond to sums 
of orthogonal $N$-tuples in $\U(\cA)\phi$. 

For the case $N = 1$ and $\lambda = m \delta_{j_0}$ we 
simply obtain $\cO_{\psi_\lambda^\cA} = -i m\U(\cA)\phi$. 
\end{rem} 

\begin{rem}
In the setting of Theorem~\ref{flags2}(i), 
it is not clear whether the coadjoint orbit 
$\cO_{\psi_\lambda^{\cA}}\subseteq\fu(\cA)'$ 
is ``smooth'' in the sense that the isotropy Lie algebra at any 
of its points is a complemented subspace of $\fu(\cA)$. 
Nevertheless, if $K(\cH)\subseteq\pi(\cA)$, then the fact that the 
natural complement of $\fu(\cH)_\lambda$ can be chosen in 
$\fu_\infty(\cH)$ implies that it also is a complement 
of the stabilizer algebra in $\pi(\fu(\cA))$. 
\end{rem}

We can now describe which ones of the representations $\pi_\lambda^{\cA}$ are contractive in the sense of \cite{Pa83} and \cite{Pa87}. 

\begin{prop}\label{contr_SW}
If $\lambda\in\cP$, then we have 
\begin{equation}
  \label{eq:esti}
(\forall g_1,g_2\in\U(\cA))\quad 
\Vert\pi_\lambda^{\cA}(g_1)-\pi_\lambda^{\cA}(g_2)\Vert\le\Vert g_1-g_2\Vert 
\end{equation}
if and only if $\lambda = \pm \eps_j$ for some $j \in J$, i.e., 
$\pi_\lambda^{\cA}$ is either the representation $\pi\vert_{\U(\cA)}$ or its dual.
\end{prop}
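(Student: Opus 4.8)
The plan is to read the contractivity condition \eqref{eq:esti} off the momentum set by means of Lemma~\ref{contr}, and then to invoke the precise description of $I_{\pi_\lambda^\cA}$ furnished by Theorem~\ref{flags2}. First I would check that Lemma~\ref{contr} applies to $G=\U(\cA)$ with the left-invariant metric $d(g_1,g_2)=\Vert g_1-g_2\Vert$ and the operator norm on $\fu(\cA)$: the metric is left invariant because left translations by unitaries are isometric, and $\Vert\exp(tx)-\1\Vert\le |t|\,\Vert x\Vert+O(t^2)$ yields $\limsup_{t\to 0}d(\exp(tx),\1)/t\le\Vert x\Vert$. Writing $L_{\pi_\lambda^\cA}$ for the optimal Lipschitz constant on the right-hand side of Lemma~\ref{contr}, the estimate \eqref{eq:esti} is exactly the assertion $L_{\pi_\lambda^\cA}\le 1$.

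The key identity is $\Vert\dd\pi_\lambda^\cA\Vert=\Vert\lambda\Vert_1$. To obtain it, recall from Theorem~\ref{flags2}(i) that $I_{\pi_\lambda^\cA}=\oline{\conv}^{w^*}(\cO_{\psi_\lambda^\cA})$, and from Theorem~\ref{flags2}(ii) that $\cO_{\psi_\lambda^\cA}$ lies on the sphere of radius $\Vert\lambda\Vert_1$ in $\fu(\cA)'$. Since the closed ball of radius $\Vert\lambda\Vert_1$ is convex and weak-$*$-closed (Banach--Alaoglu), it contains the weak-$*$-closed convex hull $I_{\pi_\lambda^\cA}$; as the orbit itself attains the value $\Vert\lambda\Vert_1$, I conclude $\sup\{\Vert\mu\Vert:\mu\in I_{\pi_\lambda^\cA}\}=\Vert\lambda\Vert_1$. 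By the equality asserted in Lemma~\ref{contr}, this supremum equals $\Vert\dd\pi_\lambda^\cA\Vert$, which proves the identity. For the forward implication, if \eqref{eq:esti} holds then Lemma~\ref{contr} gives $\Vert\lambda\Vert_1=\Vert\dd\pi_\lambda^\cA\Vert\le L_{\pi_\lambda^\cA}\le 1$. Since $\lambda$ is integer-valued, $\sum_j|\lambda_j|\le 1$ forces $\lambda\in\{0\}\cup\{\pm\eps_j:j\in J\}$; discarding the trivial weight $\lambda=0$ (whose representation is one-dimensional, so that \eqref{eq:esti} holds vacuously) leaves exactly $\lambda=\pm\eps_j$.

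For the converse I would unwind Definition~\ref{the_repres} for $\lambda=\pm\eps_j$. The weight $\eps_j$ is that of the defining representation, so $\tilde\pi_{\eps_j}=\id_{\U(\cH)}$ and $\pi_{\eps_j}^\cA=\pi\vert_{\U(\cA)}$ acting on $\cH$; likewise $\pi_{-\eps_j}^\cA$ is its contragredient. For $\lambda=\eps_j$ the estimate is immediate from the fact that a $*$-homomorphism of $C^*$-algebras is norm-decreasing: $\Vert\pi(g_1)-\pi(g_2)\Vert=\Vert\pi(g_1-g_2)\Vert\le\Vert g_1-g_2\Vert$. For $\lambda=-\eps_j$ one represents the contragredient as $g\mapsto(\pi(g)^{-1})^\top$ and uses that transposition is isometric on $B(\cH)$, together with $\pi(g)^{-1}=\pi(g^*)$ and $\Vert g_1^*-g_2^*\Vert=\Vert g_1-g_2\Vert$, which reduces the bound to the previous case.

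The main obstacle is the key identity $\Vert\dd\pi_\lambda^\cA\Vert=\Vert\lambda\Vert_1$: everything hinges on knowing that the supremum of the dual norm over the momentum set is precisely the radius of the coadjoint orbit. This is where Theorem~\ref{flags2}(ii)---the location of $\cO_{\psi_\lambda^\cA}$ on the sphere of radius $\Vert\lambda\Vert_1$---combined with the weak-$*$-compactness of dual balls does the decisive work, after which Lemma~\ref{contr} converts this geometric datum into the analytic Lipschitz statement. The remaining verifications, namely the applicability of Lemma~\ref{contr} and the contractivity of $\pi$ and of its dual, are routine.
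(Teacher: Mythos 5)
Your argument is correct and follows the same route as the paper's proof: apply Lemma~\ref{contr} to bound $\sup\{\Vert\mu\Vert:\mu\in I_{\pi_\lambda^{\cA}}\}$ by $1$, then use Theorem~\ref{flags2}(ii) (the orbit lying on the sphere of radius $\Vert\lambda\Vert_1$) to conclude $\Vert\lambda\Vert_1\le 1$ and hence $\lambda=\pm\eps_j$ by integrality. You merely supply details the paper leaves implicit --- the verification of the hypotheses of Lemma~\ref{contr}, the explicit contractivity of $\pi\vert_{\U(\cA)}$ and its dual for the converse, and the exclusion of $\lambda=0$ --- all of which are fine.
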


\begin{prf}
If \eqref{eq:esti} 
is satisfied, then it follows by Lemma~\ref{contr} that for every $\mu\in I_{\pi_\lambda^{\cA}}$ 
we have $\Vert\mu\Vert\le 1$. 
On the other hand $\psi_\lambda^{\cA}\in I_{\pi_\lambda^{\cA}}$ (see Theorem~\ref{flags2}), 
hence we get by Theorem~\ref{flags2}(ii) that $\Vert\lambda\Vert_1\le1$. 
Since $\lambda$ is an integer valued function on $J$, the assertion follows. 
\end{prf}

\subsection*
{Using the momentum sets in classification problems}


We have seen in Theorem~\ref{flags2}(iii) that, fixing the 
algebra representation $\pi$, the 
unitary representations of the form $\pi_\lambda^{\cA}$ 
can be distinguished by the corresponding 
coadjoint orbits $\cO_{\psi_\lambda^{\cA}}$ and the norm closed 
momentum sets $I_{\pi_\lambda}^{\bf n}$. 
A priori, the coadjoint orbit is not defined intrinsically 
in terms of the representation $\pi_\lambda^\cA$ of $\U(\cA)$, 
but $I_{\pi_\lambda}^{\bf n}$ is. 
Nevertheless, just as in the representation 
theory of compact Lie groups (see \cite{Wi92} and \cite{Ne00}), 
we found that 
$\cO_{\psi_\lambda^\cA}$ can be specified intrinsically 
as the set of extreme points of $I_{\pi_\lambda}^{\bf n}$. 
%

Another intrinsically defined object is the full momentum set 
$I_{\pi_\lambda}^\cA$. As we have seen in \cite[Thm.~7.1]{Ne10}, 
it does not separate the unitary representations of $\U(\cA)$ 
obtained by restricting inequivalent algebra representations 
with the same kernel, and such representations exist 
for separable $C^*$-algebras not of type I 
(\cite[Thm.~9.1]{Dix64}, \cite{Sa67}). 

\begin{rem} \label{rem:3.9} 
(a) However, \cite[Thm.~1.1]{KOS03} 
asserts that the normal subgroup of asymptotically inner automorphisms of 
a separable $C^*$-algebra 
$\cA$ acts transitively on $\Ext(I_\pi)$ for any irreducible 
representation $\pi$ of $\cA$. Indeed, $\Ext(I_\pi)$ is the set of 
all pure states $\phi$ of $\cA$ for which the corresponding representation 
$\pi_\phi$ has the same  kernel as $\pi$, hence the same momentum set 
(cf.\ \cite[Thm.~X.5.12]{Ne00}). 

(b) According to \cite[Prop.~5.1.3]{Dix64}, two representations 
$\pi_1$ and $\pi_2$ of a $C^*$-algebra $\cA$ are quasi-equivalent 
if and only if they have equivalent  multiples. For the corresponding 
unitary representations of $\U(\cA)$, this means that 
$I_{\pi_1}^\bn =I_{\pi_2}^\bn$. In fact, multiples of a given representations 
have the same norm-closed momentum set, so that 
quasi-equivalence of $\pi_1$ and $\pi_2$ implies equality of the norm-closed 
momentum sets. If, conversely, $I_{\pi_1}^\bn = I_{\pi_2}^\bn$, then the 
two representations $\pi_1$ and $\pi_2$ have the same set of 
normal states, so that their extensions 
$\pi_1^{**} \: \cA^{**} \to B(\cH_1)$ and 
$\pi_2^{**} \: \cA^{**} \to B(\cH_2)$ to the enveloping 
$W^*$-algebra $\cA^{**}$ have the same kernel. 
This in turn implies that 
\[ \pi_1(\cA)'' 
= \pi_1^{**}(\cA^{**})\cong 
\cA^{**}/\ker \pi_1^{**} \cong 
 \pi_2^{**}(\cA^{**})
= \pi_2(\cA)'',\] 
so that \cite[Prop.~5.1.3]{Dix64} implies that $\pi_1$ and $\pi_2$ 
are quasi-equivalent. 

(c) According to \cite[Prop.~5.3.3]{Dix64}, two irreducible 
representations of a $C^*$-algebra which are quasi-equivalent are 
equivalent. This means that in the class of those 
unitary representations of $\U(\cA)$ which are restrictions of algebra 
representations, the norm-closed momentum set $I_\pi^\bn$ determines 
the equivalence class of $\pi$. As we have already observed above, 
this is not true for the momentum set $I_\pi$ if $\cA$ is not of 
type $I$. 

Geometrically, the fact that $I_\pi^\bn$ determines the equivalence 
class of the unitary representation $(\pi,\cH)$ of $\U(\cA)$ coming 
from an algebra representation is that 
$I_\pi^\bn \cong S_*(B(\cH))$ and that the extreme points of this 
set are parametrized by the projective space $\bP(\cH)$ of 
one-dimensional subspaces of $\cH$. As $\U(\cA)$ acts transitively 
on this set, it acts transitively on the extreme points of 
$I_\pi^\bn$, so that $\Ext(I_\pi^\bn)$ is the $\U(\cA)$-orbit 
in $\Ext(S(\cA))$ consisting of all those pure states $\phi$ 
with $\pi_\phi \cong \pi$ as algebra representations. 
\end{rem}

In the present subsection we discuss some applications of the momentum sets to the problem of classifying representations of unitary groups 
for various classes of $C^*$-algebras. 
Before going any further in this direction, let us settle the case of 
$C^*$-algebras of type~I by a statement which extends Proposition~\ref{prop:mostbasic}. 

\begin{prop}\label{type_I}
Let $\pi\colon\cA\to B(\cH)$ be 
any $*$-representation of a unital $C^*$-algebra 
such that $K(\cH)\subseteq\pi(\cA)$. 
If $\lambda,\mu\in\cP$,  then  
$\pi_\lambda^{\cA}\cong \pi_\mu^{\cA}$ if and only if 
$I_{\pi_\lambda^{\cA}}=I_{\pi_\mu^{\cA}}$.
\end{prop}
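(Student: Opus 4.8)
The plan is to treat the two implications separately. The implication $\pi_\lambda^{\cA}\cong\pi_\mu^{\cA}\Rightarrow I_{\pi_\lambda^{\cA}}=I_{\pi_\mu^{\cA}}$ is immediate: an intertwining unitary $U$ induces $[v]\mapsto[Uv]$ on projective spaces and conjugates the derived representations, so $\Phi_{\pi_\lambda^{\cA}}$ and $\Phi_{\pi_\mu^{\cA}}$ have the same image in $\fu(\cA)'$, hence the same weak-$*$-closed convex hull. The substance is the converse, and the idea is to push the equality $I_{\pi_\lambda^{\cA}}=I_{\pi_\mu^{\cA}}$ down to the restricted unitary group $\U_\infty(\cH)$, where Proposition~\ref{prop:mostbasic} already settles the question.

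The hypothesis $K(\cH)\subseteq\pi(\cA)$ enters as follows. Since $K(\cH)$ already acts irreducibly on $\cH$, the representation $\pi$ is automatically irreducible, so $\pi_\lambda^{\cA}$ is defined as in Definition~\ref{the_repres}. More importantly, $K(\cH)\subseteq\pi(\cA)$ yields $\fu_\infty(\cH)\subseteq\pi(\fu(\cA))$: given a skew-hermitian compact $Y$, choose $a\in\cA$ with $\pi(a)=Y$ and replace $a$ by $\frac12(a-a^*)$. I then set $\fh:=(\pi\vert_{\fu(\cA)})^{-1}(\fu_\infty(\cH))$, a closed Lie subalgebra of $\fu(\cA)$ (the preimage of the closed subalgebra $\fu_\infty(\cH)$ under the bounded homomorphism $\pi\vert_{\fu(\cA)}$), and let $H=\langle\exp_{\U(\cA)}(\fh)\rangle\subseteq\U(\cA)$ be the corresponding integral subgroup. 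By construction $\pi(\fh)=\fu_\infty(\cH)$ and $\pi(H)\subseteq\U_\infty(\cH)$, so $\dd(\pi\vert_H)=\pi\vert_{\fh}\colon\fh\to\fu_\infty(\cH)$ is a bounded linear surjection; consequently its adjoint $(\dd(\pi\vert_H))'\colon\fu_\infty(\cH)'\to\fh'$ is injective and weak-$*$-continuous.

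Next I would compute the momentum set of $\pi_\lambda^{\cA}\vert_H$ in two ways, both via Proposition~\ref{pullback}. Restricting along the inclusion $H\hookrightarrow\U(\cA)$ gives $I_{\pi_\lambda^{\cA}\vert_H}=\rho(I_{\pi_\lambda^{\cA}})$, where $\rho\colon\fu(\cA)'\to\fh'$ is the restriction map. On the other hand, since $\tilde\pi_\lambda$ extends $\pi_\lambda$ and $\pi(H)\subseteq\U_\infty(\cH)$, one has $\pi_\lambda^{\cA}\vert_H=\pi_\lambda\circ(\pi\vert_H)$; applying Proposition~\ref{pullback} to the morphism $\pi\vert_H\colon H\to\U_\infty(\cH)$ and the representation $\pi_\lambda$ gives $I_{\pi_\lambda^{\cA}\vert_H}=(\dd(\pi\vert_H))'(I_\lambda)$, where $I_\lambda$ is the $\U_\infty(\cH)$-momentum set of $\pi_\lambda$. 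Comparing, I obtain $\rho(I_{\pi_\lambda^{\cA}})=(\dd(\pi\vert_H))'(I_\lambda)$, and likewise for $\mu$. Now if $I_{\pi_\lambda^{\cA}}=I_{\pi_\mu^{\cA}}$, then applying $\rho$ and using the injectivity of $(\dd(\pi\vert_H))'$ yields $I_\lambda=I_\mu$ in $\fu_\infty(\cH)'$. By Proposition~\ref{prop:mostbasic} this forces $\pi_\lambda\cong\pi_\mu$, hence $\mu\in\cW\lambda$ by Theorem~\ref{thm:parti}(i), and finally $\pi_\lambda^{\cA}\cong\pi_\mu^{\cA}$ by Theorem~\ref{thm:1.3}.

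I expect the main obstacle to be the bookkeeping needed to apply Proposition~\ref{pullback} rigorously to the integral subgroup $H$: one must verify that $\fh$ is a closed Banach--Lie subalgebra, that $H$ carries a Banach--Lie group structure for which both $H\hookrightarrow\U(\cA)$ and $\pi\vert_H\colon H\to\U_\infty(\cH)$ are morphisms with the expected differentials, and that the two pullback computations genuinely land in the same subset of $\fh'$. The conceptual heart is cheap once these are in place: the surjectivity of $\pi\vert_{\fh}$ onto $\fu_\infty(\cH)$ — which is exactly where $K(\cH)\subseteq\pi(\cA)$ is used — makes the adjoint injective, so no information is lost in restricting from $\U(\cA)$ to $H$, and the full (weak-$*$-closed) momentum sets suffice here precisely because Proposition~\ref{prop:mostbasic} is available for the weak-$*$-closed sets $I_\lambda$ on $\U_\infty(\cH)$.
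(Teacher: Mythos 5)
Your argument is correct and follows essentially the same route as the paper: reduce to $\U_\infty(\cH)$ via the surjection $\pi\vert_{\fu(\cA)}\onto\fu_\infty(\cH)$ supplied by $K(\cH)\subeq\pi(\cA)$, apply Proposition~\ref{pullback} on both sides, and invoke Proposition~\ref{prop:mostbasic} together with Theorems~\ref{thm:parti} and~\ref{thm:1.3}. The only difference is cosmetic: the paper argues by contraposition and compresses your two pullback computations into the single line ``$I_\lambda=I_{\pi_\lambda^{\cA}}|_{\fu_\infty(\cH)}$'', whereas you make the intermediate integral subgroup $H$ and the injectivity of the adjoint explicit.
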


\begin{proof} First we note that $K(\cH) \subeq \pi(\cA)$ implies that 
the representation $\pi$ is irreducible. 
Assume that $\pi_\lambda^{\cA}\not\cong \pi_\mu^{\cA}$. 
We have to show that $I_{\pi_\lambda^{\cA}}\not=I_{\pi_\mu^{\cA}}$.
In view of Theorem~\ref{thm:1.3}, $\mu\not\in \cW\lambda$, 
so that Proposition~\ref{prop:mostbasic}, combined with 
Theorem~\ref{thm:parti}, implies that the subsets $I_\lambda$ and $I_\mu$ 
of $\fu_\infty(\cH)'$ are different. 

Since $K(\cH)\subseteq\pi(\cA)$, 
Proposition~\ref{pullback} shows 
that $I_\lambda=I_{\pi_\lambda^{\cA}}|_{\fu_\infty(\cH)}$ 
and 
$I_\mu=I_{\pi_\mu^{\cA}}|_{\fu_\infty(\cH)}$. 
Hence $I_{\pi_\lambda^{\cA}}\ne I_{\pi_\mu^{\cA}}$. 
\end{proof}

\begin{cor}\label{type_I_cor}
Let $\pi\colon\cA\to B(\cH)$ be an irreducible $*$-representation of a unital $C^*$-algebra of type~I. 
If $\lambda,\mu\colon J\to\Z$ are non-decreasing, finitely supported functions,  then $\pi_\lambda^{\cA}\cong \pi_\mu^{\cA}$ if and only if 
$I_{\pi_\lambda^{\cA}}=I_{\pi_\mu^{\cA}}$.
\end{cor}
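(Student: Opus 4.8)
The plan is to reduce the entire statement to Proposition~\ref{type_I}, whose sole hypothesis on $(\pi,\cH)$ is the containment $K(\cH)\subeq\pi(\cA)$. Under that hypothesis the proposition already asserts, for \emph{all} $\lambda,\mu\in\cP$, that $\pi_\lambda^\cA\cong\pi_\mu^\cA$ holds if and only if $I_{\pi_\lambda^\cA}=I_{\pi_\mu^\cA}$. Since every non-decreasing, finitely supported function $J\to\Z$ is in particular an element of $\cP=\ell^1(J,\Z)$, the only thing that genuinely has to be checked is that the type I assumption on $\cA$, together with irreducibility of $\pi$, forces $K(\cH)\subeq\pi(\cA)$.

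For this I would invoke the standard structural characterization of type I (postliminal, GCR) $C^*$-algebras: if $\cA$ is of type I, then every irreducible $*$-representation $(\pi,\cH)$ satisfies $K(\cH)\subeq\pi(\cA)$ (cf.\ \cite{Dix64}). The reason is quickly recalled: replacing $\cA$ by the primitive quotient $\cA/\ker\pi$, on which $\pi$ acts faithfully and irreducibly, we may assume $\pi$ faithful; as a quotient of a postliminal algebra this quotient is again postliminal, hence contains a nonzero liminal ideal $\cI$. Restricting the irreducible representation to a nonzero ideal yields again an irreducible representation, so $\pi\vert_\cI$ is a nonzero irreducible representation of a liminal algebra, whence $\pi(\cI)=K(\cH)$; since $\pi(\cI)\subeq\pi(\cA)$, the desired containment follows. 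Applied to our given irreducible $\pi$, this produces exactly the hypothesis of Proposition~\ref{type_I}.

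It then only remains to quote Proposition~\ref{type_I} and to note that the restriction to non-decreasing functions is immaterial, serving merely to single out representatives; the conclusion in fact holds verbatim for all $\lambda,\mu\in\cP$. (The forward implication is in any case automatic, since unitarily equivalent representations have identical momentum sets; the substance is the converse, which is what the proposition delivers via Proposition~\ref{prop:mostbasic} and Theorem~\ref{thm:1.3}.) It is worth stressing that, in contrast to Theorem~\ref{flags2}(iii) — valid for arbitrary unital $\cA$ but only for the \emph{norm-closed} momentum sets $\Inm$ — here the full weak-$*$-closed momentum set $I_{\pi_\lambda^\cA}$ already separates, and it is precisely this stronger separation that the type I hypothesis is designed to guarantee. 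I do not anticipate a real obstacle: the argument is short because the work was already carried out in Proposition~\ref{type_I}, and the one point requiring care is stating the type I characterization in the precise form $K(\cH)\subeq\pi(\cA)$ and citing it correctly, since several sources phrase ``type I'' through composition series or through the enveloping von Neumann algebra rather than directly via images of irreducible representations.
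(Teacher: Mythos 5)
Your argument is correct and coincides with the paper's own proof: both reduce the corollary to Proposition~\ref{type_I} by invoking the characterization of type~I (GCR) algebras, namely that every irreducible representation of such an algebra has image containing $K(\cH)$. Your additional sketch of why the GCR property yields $K(\cH)\subeq\pi(\cA)$ is a harmless elaboration of what the paper simply cites to Sakai.
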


\begin{prf} To derive this from Proposition~\ref{type_I}, 
we recall that a $C^*$-algebra $\cA$ is of type $I$ if and only if 
for every irreducible representation 
$(\pi, \cH)$ we have $K(\cH) \supeq \pi(\cA)$ 
(cf.~\cite{Sa67}, where this property is called GCR). 
\end{prf}

\begin{prop} \mlabel{prop:idealdet} For $0\not=\lambda \in \cP$, 
the ideal $\ker \pi$ can be recovered from 
$I_{\pi_\lambda^\cA}$ as the unique largest ideal of 
$\cA$ contained in $I_{\pi_\lambda^\cA}^\bot$. 
\end{prop}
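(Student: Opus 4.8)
The plan is to recover $\ker\pi$ by playing off two facts against each other: that $I_{\pi_\lambda^\cA}$ annihilates $\ker\pi$ (so $\ker\pi$ is a candidate), and that no strictly larger ideal can survive, because a nonzero ideal is carried by $\pi$ onto a weak-$*$-dense $*$-subalgebra of $B(\cH)$ on which the normal functional $\psi_\lambda$ cannot vanish. Throughout I identify $\cA$ with the complexification $\fu(\cA)\oplus i\fu(\cA)$, so each real functional $\mu\in\fu(\cA)'$ has a unique $\C$-linear extension to $\cA$ and $I_{\pi_\lambda^\cA}^\bot$ is a $\C$-linear subspace of $\cA$. Since every closed two-sided ideal $\cI\trile\cA$ is $*$-closed, it is the complex span of $\cI\cap\fu(\cA)$; hence $\cI\subseteq I_{\pi_\lambda^\cA}^\bot$ is equivalent to $\mu|_{\cI\cap\fu(\cA)}=0$ for all $\mu\in I_{\pi_\lambda^\cA}$. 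This reduces everything to testing skew-hermitian elements against the momentum set.

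First I would check $\ker\pi\subseteq I_{\pi_\lambda^\cA}^\bot$. From $\pi_\lambda^{\cA}=\tilde\pi_\lambda\circ\pi\vert_{\U(\cA)}$ one has, as already recorded in the proof of Theorem~\ref{flags2}(i), the relation $\Phi_{\pi_\lambda^{\cA}}=(\pi\vert_{\fu(\cA)})'\circ \Phi_{\widetilde{\pi}_\lambda}$, so every functional in $\im(\Phi_{\pi_\lambda^\cA})$ factors through $\pi\vert_{\fu(\cA)}$ and therefore vanishes on $\ker\pi\cap\fu(\cA)$. The set of $\mu\in\fu(\cA)'$ vanishing on $\ker\pi\cap\fu(\cA)$ is a weak-$*$-closed linear subspace, hence contains the weak-$*$-closed convex hull $I_{\pi_\lambda^\cA}$. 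By complexification this yields $\ker\pi\subseteq I_{\pi_\lambda^\cA}^\bot$, and $\ker\pi$ is an ideal, so it competes for the title of largest ideal inside $I_{\pi_\lambda^\cA}^\bot$.

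The core is the reverse maximality: I claim any ideal $\cI\subseteq I_{\pi_\lambda^\cA}^\bot$ satisfies $\cI\subseteq\ker\pi$, and here it suffices to use the single functional $\psi_\lambda^{\cA}\vert_{\fu(\cA)}=\Phi_{\pi_\lambda^{\cA}}([v_\lambda])\in I_{\pi_\lambda^\cA}$ from Theorem~\ref{flags2}(i). For $X\in\cI\cap\fu(\cA)$ the hypothesis gives $0=\psi_\lambda^\cA(X)=\psi_\lambda(\pi(X))$; taking complex spans over $\cI\cap\fu(\cA)$ shows that the $w^*$-continuous functional $\psi_\lambda$ of \eqref{psi} vanishes on the closed two-sided ideal $\cK:=\oline{\pi(\cI)}\trile\pi(\cA)$. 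Suppose for contradiction $\cI\not\subseteq\ker\pi$, so $\cK\ne 0$. Since $\pi$ is irreducible, $\oline{\cK\cH}$ is a nonzero $\pi(\cA)$-invariant subspace, hence all of $\cH$, and a standard argument gives $\cK'=\C\1$, so $\cK$ acts irreducibly (cf.\ \cite{Dix64}). Then $\cK''=B(\cH)$, and as $\cK$ is nondegenerate von Neumann's bicommutant theorem makes $\cK$ weak-$*$-dense in $B(\cH)$. Because $D_\lambda$ is of finite rank, $\psi_\lambda$ is weak-$*$-continuous, so its vanishing on $\cK$ forces $\psi_\lambda=0$ on $B(\cH)$; nondegeneracy of the trace pairing then gives $D_\lambda=0$, i.e.\ $\lambda=0$, contradicting $\lambda\ne 0$. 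Hence $\cK=0$ and $\cI\subseteq\ker\pi$.

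Combining the two steps, $\ker\pi$ lies in $I_{\pi_\lambda^\cA}^\bot$ and contains every ideal that does, so it is the unique largest such ideal. The one genuinely delicate point, and the step I expect to demand the most care, is the passage from ``$\psi_\lambda$ vanishes on the ideal $\pi(\cI)$'' to ``$D_\lambda=0$'': this is exactly where irreducibility of $\pi$ (inherited by the nonzero ideal $\cK$, forcing weak-$*$-density) and the normality of $\psi_\lambda$ (coming from finite-rankness of $D_\lambda$) must both be invoked, and where the standing hypothesis $\lambda\ne 0$ is used. Everything else is bookkeeping about the complexified annihilator and the weak-$*$-closedness of annihilator subspaces.
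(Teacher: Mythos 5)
Your proof is correct, and for the decisive step it takes a genuinely different route from the paper. Both arguments share the easy half (every functional in $\im(\Phi_{\pi_\lambda^\cA})$ factors through $\pi$, so $\ker\pi\subeq I_{\pi_\lambda^\cA}^\bot$), but they diverge on maximality. The paper identifies the annihilator exactly: it observes that $I_{\pi_\lambda^\cA}^\bot=\ker\dd\pi_\lambda^\cA$ (complexified), uses the simplicity of the Banach--Lie algebra $\fu_\infty(\cH)$ to conclude $\ker\dd\pi_\lambda=\{0\}$, deduces that $\ker\dd\tilde\pi_\lambda$ centralizes $\fu_\infty(\cH)$ and hence lies in $\T\1$, computes the central character $\tilde\pi_\lambda(\zeta\1)=\zeta^{\sum_j\lambda_j}$, and concludes $\pi(I_{\pi_\lambda^\cA}^\bot)\subeq\C\1$, which contains no non-zero ideal of the irreducible algebra $\pi(\cA)$. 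You instead test a putative ideal $\cI\not\subeq\ker\pi$ against the single momentum functional $\psi_\lambda^\cA=\Phi_{\pi_\lambda^\cA}([v_\lambda])$ and derive a contradiction from the weak-$*$-density of the non-zero ideal $\oline{\pi(\cI)}$ in $B(\cH)$ together with the normality of $\psi_\lambda$ and $D_\lambda\ne0$. Your version is more $C^*$-algebraic and avoids the Lie-theoretic input (simplicity of $\fu_\infty(\cH)$), at the price of invoking Theorem~\ref{flags2}(i) to know that $\psi_\lambda^\cA$ lies in the momentum set; the paper's version is self-contained at that point and yields the sharper byproduct that $I_{\pi_\lambda^\cA}^\bot$ itself equals $\pi^{-1}(\C\1)$ or $\ker\pi$ according to whether $\sum_j\lambda_j=0$ or not, whereas your argument only locates the ideals inside it. Both correctly isolate where $\lambda\ne0$ is used (non-triviality of $\pi_\lambda$ in the paper, $D_\lambda\ne0$ in yours).
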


\begin{prf} We consider the subspace 
\[ \cB := \{ A \in \cA \: \la I_{\pi_\lambda^\cA}, A \ra = \{0\}\} 
= (\ker \dd\pi_\lambda^\cA)_\C. \] 
From 
\[ \ker \pi \cap \fu(\cA) \subeq 
\ker \dd\pi_\lambda^\cA = I_{\pi_\lambda^\cA}^\bot \] 
it immediately follows that $\cB \supeq \ker \pi$. 
Therefore it remains to show that $\pi(\cB)$ contains no non-zero 
ideal of $\pi(\cA)$. 

As $\fu_\infty(\cH)$ is a simple Banach--Lie algebra 
and $\pi_\lambda$ is non-trivial, 
$\ker \dd\pi_\lambda = \{0\}$. Therefore every 
element in $\ker \dd\tilde\pi_\lambda$ commutes with 
$\fu_\infty(\cH)$, hence is of the form $\zeta \1$ for 
some $\zeta\in \T$. For these elements we have 
\[ \tilde\pi_\lambda(\zeta \1) 
= \prod_{j \in J} \zeta^{\lambda_j}
= \zeta^{\sum_j \lambda_j}.\] 
Therefore $\ker\dd\tilde\pi_\lambda$ is non-zero 
if and only if $\sum_j \lambda_j = 0$, and in this case it 
coincides with the center $\T \1$. We conclude that 
$\ker \pi_\lambda^\cA \subeq \T \1$ and hence that 
$\pi(\cB) \subeq \C \1$. 

If $\pi(\cB) = \{0\}$, we have $\cB = \ker \pi$ and there is nothing to show. 
So we assume that $\pi(\cB) =\C \1$. 
Then $\pi(\cB)$ contains no non-zero ideal of $\pi(\cA)$, which 
in turn shows that every ideal of $\cA$ contained in $\cB$ is contained 
in $\ker \pi$. This means that $\ker \pi$ is the unique largest 
ideal of $\cA$ contained in $\cB$. 
\end{prf}

Since an irreducible representation of a separable type $I$ algebra 
is determined by its kernel (\cite[Thm.~9.1]{Dix64}), we immediately derive 
with Corollary~\ref{type_I_cor} and Proposition~\ref{prop:idealdet}: 

\begin{thm} \mlabel{thm:idealdet} For $\cA$ separable of type $I$, 
$0\not=\lambda, \mu \in \cP$ and two irreducible 
representations $(\pi, \cH)$ and $(\rho, \cK)$, we have 
\[ I_{\pi_\lambda^\cA} = I_{\rho_\mu^\cA} 
\quad \Rarrow \quad 
\pi \cong \rho \quad \mbox{ and } \quad 
\pi_\lambda^\cA \cong \rho_\mu^\cA.\] 
\end{thm}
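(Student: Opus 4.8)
The plan is to establish the two conclusions in turn---first that the algebra representations $\pi$ and $\rho$ agree, then that their associated $\U(\cA)$-representations agree---by assembling Proposition~\ref{prop:idealdet}, the cited uniqueness theorem of Dixmier, and the type~$I$ classification in Corollary~\ref{type_I_cor}/Proposition~\ref{type_I}.

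First I would recover the kernels. Since $\lambda\neq0$ and $\mu\neq0$, Proposition~\ref{prop:idealdet} characterizes $\ker\pi$ (resp.\ $\ker\rho$) as the unique largest ideal of $\cA$ contained in the annihilator $I_{\pi_\lambda^\cA}^\bot$ (resp.\ $I_{\rho_\mu^\cA}^\bot$) taken in $\cA$. The hypothesis $I_{\pi_\lambda^\cA}=I_{\rho_\mu^\cA}$ forces these two annihilators to coincide, hence so do the two largest ideals, giving $\ker\pi=\ker\rho$. Because $\cA$ is separable of type~$I$, an irreducible representation is determined up to unitary equivalence by its kernel (\cite[Thm.~9.1]{Dix64}), and I would thereby conclude $\pi\cong\rho$.

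Next I would transfer the comparison to a single algebra representation. Fixing a unitary $U\colon\cH\to\cK$ with $U\pi(a)=\rho(a)U$ for all $a\in\cA$, conjugation by $U$ is an isomorphism $\U(\cH)\to\U(\cK)$ carrying $\pi\vert_{\U(\cA)}$ to $\rho\vert_{\U(\cA)}$, and by the naturality of the highest weight construction $\tilde\pi_\mu$ entering Definition~\ref{the_repres} it intertwines $\tilde\pi_\mu$ on $\U(\cH)$ with $\tilde\pi_\mu$ on $\U(\cK)$. Composing with $\pi\vert_{\U(\cA)}$ then gives $\rho_\mu^\cA\cong\pi_\mu^\cA$, whence $I_{\rho_\mu^\cA}=I_{\pi_\mu^\cA}$; together with the hypothesis this produces $I_{\pi_\lambda^\cA}=I_{\pi_\mu^\cA}$ for one and the same irreducible representation $\pi$ of the type~$I$ algebra $\cA$.

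Finally I would invoke the type~$I$ classification. Since $\pi$ is irreducible and $\cA$ is of type~$I$, we have $K(\cH)\subeq\pi(\cA)$, so Corollary~\ref{type_I_cor} (or directly Proposition~\ref{type_I}, which is stated for arbitrary weights) applies to the pair $\lambda,\mu$ and the equality $I_{\pi_\lambda^\cA}=I_{\pi_\mu^\cA}$ yields $\pi_\lambda^\cA\cong\pi_\mu^\cA$; chaining with $\pi_\mu^\cA\cong\rho_\mu^\cA$ from the previous step gives $\pi_\lambda^\cA\cong\rho_\mu^\cA$, the second assertion. I expect the only delicate point to be the naturality claim in the third paragraph, that a unitary intertwiner of $\pi$ and $\rho$ descends to an equivalence of $\pi_\mu^\cA$ and $\rho_\mu^\cA$; this is morally automatic because $\tilde\pi_\mu$ is built functorially from the underlying Hilbert space (and follows formally from the uniqueness of the highest weight representation), so one need only check that the construction of Definition~\ref{the_repres} respects unitary conjugation of its input, after which the argument is a direct assembly of the cited results.
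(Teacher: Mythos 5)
Your proposal is correct and follows exactly the route the paper intends: the paper's own ``proof'' is the one-sentence derivation immediately preceding the theorem, which cites precisely the same three ingredients (Proposition~\ref{prop:idealdet} to recover the kernels, \cite[Thm.~9.1]{Dix64} to get $\pi\cong\rho$, and Corollary~\ref{type_I_cor}/Proposition~\ref{type_I} to separate the weights). You merely spell out the naturality/transfer step that the paper leaves implicit, and your handling of it is fine.
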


\begin{ex} (a) The argument in Proposition~\ref{prop:idealdet} 
does not require that $\dim \cH = \infty$, it is only needed 
that $\pi(\cA) \not=\C \1$, i.e., that $\dim \cH > 1$. 

If $\dim \cH = 1$, then $\cW =\{\1\}$ and 
$\cP = \Z$, the character group of 
$\U_1(\C) \cong \T$. For $\pi(a) = \chi(a)\1$, we accordingly have 
$\pi_\lambda^\cA(a) = \chi(a)^\lambda\1$. 

(b) Suppose that $\cA$ is commutative, i.e., 
$\cA = C(X)$ for a compact space $X$. 
Then $\U(\cA) = C(X,\T)$, and by taking tensor products, 
the preceding construction leads to all characters of 
$C(X,\T)$ which are finite products of point evaluations. 
If $X$ is totally disconnected, then this exhausts the character 
group of $C(X,\T)$ (\cite{Au93}), but in general there are much more. 

In fact, for $X = [0,1]$, we have 
$C([0,1],\T) \cong C([0,1],\R)/\Z$, so that 
the character group can be identified with the set 
of real-valued Borel measures $\mu$ on $[0,1]$ 
with integral total mass. Clearly, all Dirac measures 
have this property, but Lebesgue measure also does.
\end{ex}

\begin{rem} \mlabel{rem:redux} Suppose that $\cA \subeq B(\cH)$ is a 
$C^*$-algebra containing $K(\cH)$. 

Recall that every irreducible unitary representation 
$(\pi_\lambda, \cH_\lambda)$ of $\U_\infty(\cA)$ 
extends to $\U(\cH) \supeq \U(\cA)$, so that 
$\U(\cA)$ acts trivially on the 
set of equivalence classes of irreducible unitary representations 
of the normal subgroup $\U_\infty(\cH)$. 

Let $(\pi, \cH)$ be an irreducible 
unitary representation of $\U(\cA)$. Then 
$\pi\res_{\U_\infty(\cH)}$ decomposes into irreducible 
representations, and the preceding remark implies that the 
isoptypic components are invariant under $\U(\cA)$. 
Therefore 
\[ \cH \cong \cM \otimes \cH_\lambda \quad \mbox{ and } \quad 
\pi = \gamma \otimes \pi_\lambda^\cA,\] 
 where 
$\gamma\: \U(\cA)/\U_\infty(\cH) \to \U(\cM)$ is irreducible. 

In this sense every irreducible representation of 
$\U(\cA)$ can be decomposed into one of the type $\pi_\lambda^\cA$ 
and a representation of the quotient group $\U(\cA)/\U_\infty(\cH)$, 
which is a subgroup of $\U(\cA/K(\cH))$, so that one may hope 
for an inductive description of irreducible representations 
if $\cA/K(\cH)$ again has a faithful irreducible representation 
whose image contains the compact operators. 
\end{rem}

\begin{ex} Let $\cA \subeq B(\ell^2)$ be the {\it Toeplitz algebra}, 
generated by the shift operator $S$ on $\ell^2 = \ell^2(\N,\C)$ 
and its adjoint. 
Then $\cA$ contains $K(\cH)$ and $\cA/K(\cH) \cong C(\bS^1)$ is 
commutative. In particular, we have a short exact sequence of 
Banach--Lie groups 
\[ \1 \to \U_\infty(\cH) \to \U(\cA) \to C(\bS^1,\T)_0 \to \1\] 
which implies that $\U(\cA)$ is connected. 

Since $\cA$ is separable and an extension of a commutative algebra 
by a type $I$ algebra, it is also of type $I$. Therefore 
each irreducible unitary representation is determined 
uniquely by its kernel. The simplicity of $K(\cH)$ 
implies that it is contained in every non-zero ideal 
$\cI$ of $\cA$, so that every irreducible representation 
with a non-trivial kernel factors through the commutative 
quotient $C(\bS^1)$. This implies that 
$\hat \cA = \{\id\} \dot\cup \bS^1$, i.e., there is only one 
infinite dimensional irreducible representation and all others 
are one-dimensional, parametrized by $\bS^1$. 

In this case we can also determine all irreducible 
unitary representations $(\pi, \cH)$ of $\U(\cA)$. 
If $\ker \pi$ contains $\U_\infty(\cH)$, the representation 
factors through the abelian quotient, hence is one-dimensional. 
These representations are para\-met\-rized by the characters of 
the Banach--Lie group $C(\bS^1,\T)_0 \cong \T \times C_*(\bS^1,\R)$, 
which is a product of $\T$ and the Banach space 
$C_*(\bS^1,\R)$. 

If $(\pi, \cH)$ is an irreducible continuous unitary 
representation of $\U(\cA)$, 
then Remark~\ref{rem:redux}  implies that 
$\cH \cong \cM \otimes \cH_\lambda$ and 
$\pi = \gamma \otimes \pi_\lambda^\cA$, where the representation
$\gamma\: \U(\cA)/\U_\infty(\cH) \to \U(\cM)$ is irreducible, 
hence one-dimensional. Therefore $\cH = \cH_\lambda$ and 
$\pi(g) = \chi(g) \pi_\lambda^\cA(g)$, where 
$\chi \: \U(\cA) \to \T$ is a character; actually a pull-back 
of a character of $C(\bS^1,\T)_0$. This provides a complete 
description of the continuous unitary representations 
of $\U(\cA)$. 
\end{ex}

\section{More on extreme points of momentum sets}\label{Sect4}

In this section we point out additional properties of the extreme points of momentum sets, by using some basic ideas of infinite dimensional convexity. 
We refer to \cite{FLP01} for a survey on convexity in Banach spaces. 
In particular, for arbitrary $\lambda\in\cP$, we are thus able to obtain 
in Corollary~\ref{answer} below an intrinsic description of the coadjoint orbit $\cO_{D_\lambda}$ as the set of weak-$*$-strongly exposed points of the momentum set $I_\lambda$. 
We recall that such a description involving  the norm-closed momentum set 
$I_\lambda^\bn$
was already obtained in Theorem~\ref{flags2} (see \eqref{extreme}).

\begin{defn}\label{ext_def}
If $\cX$ is a real Banach space with the topological dual $\cX'$, 
and $A\subeq\cX'$, then we define the following subsets of the set $\Ext(A)$ of all extreme points of~$A$: 
\begin{enumerate}
\item $\Ext^*(A)$ is the set of all extreme points of $A$ which are points of continuity of the identity map 
$(A,\ \text{weak-$*$-topology})\to(A,\ \text{norm topology})$.  
\item $\Dent^*(A)$ the set of \emph{weak-$*$-denting} points of $A$ 
and consists of the points $a\in A$ such that 
for every $\epsilon>0$ we have $a\not\in\oline{\conv}^{w^*}(\{b\in A\: \Vert a-b\Vert\ge\epsilon\})$.  
\item $\StrExp^*(A)$ is the set of \emph{weak-$*$-strongly exposed} points of $A$. 
It consists of the points $a\in A$ 
for which there exists a weak-$*$-continuous linear functional $f\colon\cX'\to\R$ such that  
$f(a)=\sup f(A)$ and for every sequence $\{a_n\}_{n\ge1}$ in $A$ 
with $\lim\limits_{n\to\infty}f(a_n)=f(a)$ we have $\lim\limits_{n\to\infty}\Vert a_n-a\Vert=0$. 
\end{enumerate}
It is easily seen that 
\begin{equation}\label{ext_chain}
\StrExp^*(A)\subseteq\Dent^*(A)\subseteq\Ext^*(A)\subseteq\Ext(A).
\end{equation}
\end{defn}

\begin{rem}\label{KM}
We recall that a Banach space is said to have the \emph{Kre\u\i{}n--Milman property} if every bounded, closed, convex subset is the (norm-)closed convex hull of its extreme points. 
If the Banach space under consideration is the topological dual of another Banach space, then it is known that the Kre\u\i n--Milman property is equivalent 
to the so-called Radon--Nikod\'ym property, which is further equivalent 
to the property that every weak-$*$-compact convex subset is the weak-$*$-closed convex hull of its weak-$*$-strongly exposed points;  
see \cite[Thm.~5.12]{Ph89}. 
In particular, if $A$ is weak-$*$-compact, convex, and nonempty, then in \eqref{ext_chain} we have 
$\StrExp^*(A)\neq\emptyset$. 
A description of the set of weak-$*$-denting points 
is provided by \cite[Thm. 1.3]{OP08}.

For later use, we also mention that the Kre\u\i n--Milman property is shared by both the ideal of trace-class operators on a (not necessarily separable) Hilbert space and the space of absolutely summable families $\ell^1(J)$ with a not necessarily countable index set; 
see \cite[Lemma 2]{Chu81}, and \cite[Cor. 4.1.9]{Bog83} as well as \cite{Lin66}, respectively. 
\end{rem}

\begin{rem}\label{extstar}
If $\pi\colon G\to\U(\cH)$ is a bounded representation, then the momentum set $I_\pi$ is weak-$*$-compact and is the weak-$*$-closed convex hull of $\im(\Phi_\pi)$, hence we get $\Ext(I_\pi)\subeq \oline{\im(\Phi_\pi)}^{w^*}$ by Milman's theorem 
(see \cite[Ch. 2, \S 4, Prop. 4]{Bou07}). 

It then follows that $\Ext^*(I_\pi)$ is contained in the norm closure of 
$\im(\Phi_\pi)$, and in particular $\Ext^*(I_\pi)\subeq\Inm_\pi$. 
Thence we get $\Ext^*(I_\pi)\subeq\Inm_\pi\cap\Ext(I_\pi)$, hence eventually $\Ext^*(I_\pi)\subeq\Ext^*(\Inm_\pi)$.
\end{rem}

We now record a simple folklore lemma which will be needed 
in the proof of Corollary~\ref{answer}.

\begin{lem}\label{seq}
Let $\cX$ be a real Banach space and assume that the functionals  $\phi,\phi_1,\phi_2,\dots\in\cX'$ satisfy the conditions 
$\Vert\phi_n\Vert\le\Vert\phi\Vert$ for every $n\ge 1$ 
and 
$\lim\limits_{n\to\infty}\phi_n=\phi$ in the weak-$*$-topology. 
Then $\lim\limits_{n\to\infty}\Vert\phi_n\Vert=\Vert\phi\Vert$.
\end{lem}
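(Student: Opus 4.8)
The plan is to exploit the weak-$*$ lower semicontinuity of the norm on the dual space $\cX'$. The hypothesis $\Vert\phi_n\Vert\le\Vert\phi\Vert$ immediately yields $\limsup_{n\to\infty}\Vert\phi_n\Vert\le\Vert\phi\Vert$, so the only substantive point is to establish the reverse inequality $\liminf_{n\to\infty}\Vert\phi_n\Vert\ge\Vert\phi\Vert$. Once both are in hand, the two inequalities combine to give $\lim_{n\to\infty}\Vert\phi_n\Vert=\Vert\phi\Vert$.

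To establish the lower bound I would argue directly from the definition of the operator norm. Fix $\epsilon>0$ and choose $x\in\cX$ with $\Vert x\Vert\le 1$ and $|\phi(x)|>\Vert\phi\Vert-\epsilon$. Weak-$*$ convergence $\phi_n\to\phi$ means precisely that $\phi_n(x)\to\phi(x)$ for this fixed $x$, so $|\phi_n(x)|>\Vert\phi\Vert-\epsilon$ for all sufficiently large $n$. Since $|\phi_n(x)|\le\Vert\phi_n\Vert\,\Vert x\Vert\le\Vert\phi_n\Vert$, this forces $\Vert\phi_n\Vert>\Vert\phi\Vert-\epsilon$ for all large $n$, whence $\liminf_{n\to\infty}\Vert\phi_n\Vert\ge\Vert\phi\Vert-\epsilon$. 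Letting $\epsilon\to 0$ gives $\liminf_{n\to\infty}\Vert\phi_n\Vert\ge\Vert\phi\Vert$, as required.

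There is no genuine obstacle here; the statement is a routine consequence of the fact that the norm on $\cX'$ is the supremum, over $x$ in the unit ball of $\cX$, of the maps $\mu\mapsto|\mu(x)|$, each of which is weak-$*$ continuous, so the supremum is weak-$*$ lower semicontinuous. The only role of the norm-boundedness hypothesis $\Vert\phi_n\Vert\le\Vert\phi\Vert$ is to upgrade this lower semicontinuity into full convergence; without it one could only assert $\liminf_{n\to\infty}\Vert\phi_n\Vert\ge\Vert\phi\Vert$. This is exactly the configuration in which the lemma will later be applied, where the bound $\Vert\phi_n\Vert\le\Vert\phi\Vert$ comes from the containment of all the $\phi_n$ in a norm ball.
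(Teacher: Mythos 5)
Your proof is correct and rests on the same key observation as the paper's: pick a near-norming vector $x$ for $\phi$, use pointwise (weak-$*$) convergence at $x$ together with $|\phi_n(x)|\le\Vert\phi_n\Vert$ to bound $\liminf_n\Vert\phi_n\Vert$ from below. The paper packages this as a proof by contradiction via a subsequence, whereas you argue directly through lower semicontinuity of the dual norm; the difference is purely presentational.
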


\begin{prf}
The hypothesis implies 
$\limsup\limits_{n\to\infty}\Vert\phi_n\Vert\le\Vert\phi\Vert$. 
Therefore, if the conclusion fails to be true, then we must have 
$\liminf\limits_{n\to\infty}\Vert\phi_n\Vert<\Vert\phi\Vert$. 
Then there exist $\epsilon>0$ and integers $n_1<n_2<\cdots$ 
such that $\Vert\phi_{n_k}\Vert+2\epsilon<\Vert\phi\Vert$ for every $k\ge1$. 
It then follows that there exists $x_0\in\cX$ such that $\Vert x_0\Vert=1$ 
and $\Vert\phi_{n_k}\Vert+\epsilon<\vert\phi(x_0)\vert$ for every $k\ge1$. 
Consequently $\vert\phi_{n_k}(x_0)\vert+\epsilon<\vert\phi(x_0)\vert$ for every $k\ge1$, and then we cannot possibly have 
$\lim\limits_{n\to\infty}\phi_n(x_0)=\phi(x_0)$. 
The later equality is however ensured by the hypothesis that 
$\lim\limits_{n\to\infty}\phi_n=\phi$ in the weak-$*$-topology. 
This contradiction concludes the proof.
\end{prf}

\begin{prop}\label{equality}
If $\lambda\in\cP$ and  
$\Ext^*(I_{\pi_\lambda^{\cA}})\ne\emptyset$, then we have 
$$\cO_{\psi_\lambda^{\cA}}=\Ext^*(I_{\pi_\lambda^{\cA}})
\subeq \Ext^*(\Inm_{\pi_\lambda^{\cA}}).$$
\end{prop}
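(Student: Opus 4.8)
The plan is to prove the displayed equality as two inclusions and to observe that the final inclusion $\Ext^*(I_{\pi_\lambda^{\cA}})\subeq\Ext^*(\Inm_{\pi_\lambda^{\cA}})$ is already recorded in Remark~\ref{extstar} (applied to the bounded representation $\pi_\lambda^{\cA}$ of $G=\U(\cA)$), so nothing new is needed for it. Thus the real work is the equality $\cO_{\psi_\lambda^{\cA}}=\Ext^*(I_{\pi_\lambda^{\cA}})$, and the hypothesis $\Ext^*(I_{\pi_\lambda^{\cA}})\ne\emptyset$ will be used to transport a single continuity point around a coadjoint orbit.

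First I would establish the inclusion $\Ext^*(I_{\pi_\lambda^{\cA}})\subeq\cO_{\psi_\lambda^{\cA}}$. By Remark~\ref{extstar} we have $\Ext^*(I_{\pi_\lambda^{\cA}})\subeq\Inm_{\pi_\lambda^{\cA}}\cap\Ext(I_{\pi_\lambda^{\cA}})$, and Theorem~\ref{flags2}(ii) identifies the right-hand side with $\cO_{\psi_\lambda^{\cA}}$, which is precisely the content of \eqref{extreme}. This already shows that $\Ext^*(I_{\pi_\lambda^{\cA}})$ is contained in a single coadjoint orbit.

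The heart of the argument is the reverse inclusion $\cO_{\psi_\lambda^{\cA}}\subeq\Ext^*(I_{\pi_\lambda^{\cA}})$, and the idea is homogeneity. The coadjoint action of $\U(\cA)$ on $\fu(\cA)'$ preserves the momentum set $I_{\pi_\lambda^{\cA}}$, and for each fixed $g\in\U(\cA)$ the map $\xi\mapsto\xi\circ\Ad(g)^{-1}$ is simultaneously a linear isometry of $\fu(\cA)'$ (because $\Ad(g)$ is an isometric automorphism of $\fu(\cA)$, being conjugation by a unitary, so its dual preserves the norm) and a weak-$*$-homeomorphism (being the adjoint of a bounded operator on $\fu(\cA)$). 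Consequently this action preserves both the extreme points of $I_{\pi_\lambda^{\cA}}$ and the points of continuity of the identity map $(I_{\pi_\lambda^{\cA}},w^*)\to(I_{\pi_\lambda^{\cA}},\Vert\cdot\Vert)$; hence $\Ext^*(I_{\pi_\lambda^{\cA}})$ is a $\U(\cA)$-invariant subset. Now I would pick any $\xi_0\in\Ext^*(I_{\pi_\lambda^{\cA}})$, which is nonempty by assumption; by the first inclusion $\xi_0\in\cO_{\psi_\lambda^{\cA}}$. Since $\cO_{\psi_\lambda^{\cA}}$ is a single $\U(\cA)$-orbit and $\Ext^*(I_{\pi_\lambda^{\cA}})$ is $\U(\cA)$-invariant, it follows that $\cO_{\psi_\lambda^{\cA}}=\U(\cA)\,\xi_0\subeq\Ext^*(I_{\pi_\lambda^{\cA}})$, which together with the first inclusion gives the equality.

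The one technical point to verify carefully is the claim that a point of weak-$*$-to-norm continuity is carried to another such point by each $g\in\U(\cA)$: given a net $(\eta_\alpha)$ in $I_{\pi_\lambda^{\cA}}$ converging weak-$*$ to $g\cdot\xi_0$, one transports it through $g^{-1}$ (weak-$*$-continuity and invariance of $I_{\pi_\lambda^{\cA}}$), applies continuity of $\xi_0$ to get norm convergence of $g^{-1}\cdot\eta_\alpha$ to $\xi_0$, and then applies the isometry $g$ to recover norm convergence of $\eta_\alpha$ to $g\cdot\xi_0$. I expect this invariance verification to be the only nonroutine step; everything else is a direct citation of Remark~\ref{extstar} and Theorem~\ref{flags2}(ii).
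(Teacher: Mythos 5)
Your proposal is correct and follows essentially the same route as the paper: containment of $\Ext^*(I_{\pi_\lambda^{\cA}})$ in the norm-closed orbit $\cO_{\psi_\lambda^{\cA}}$ via Theorem~\ref{flags2}(ii), then $\U(\cA)$-invariance of $\Ext^*(I_{\pi_\lambda^{\cA}})$ together with the nonemptiness hypothesis to upgrade this to equality, and Remark~\ref{extstar} for the final inclusion. The only difference is cosmetic --- the paper derives the first inclusion by applying Milman's theorem directly to $I_{\pi_\lambda^{\cA}}=\oline{\conv}^{w^*}(\cO_{\psi_\lambda^{\cA}})$ rather than routing through \eqref{extreme}, and it dismisses the invariance of $\Ext^*$ under the coadjoint action as ``easily seen,'' whereas you spell out (correctly) that each $\Ad(g)'$ is simultaneously a norm isometry and a weak-$*$-homeomorphism preserving the momentum set.
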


\begin{prf}
Theorem~\ref{flags2} ensures that 
$I_{\pi_\lambda^{\cA}}=\oline{\conv}^{w^*}(\cO_{\psi_\lambda^{\cA}})$, 
hence by Milman's theorem (\cite[Ch. 2, \S 4, Prop. 4]{Bou07}) we get 
$\Ext(I_{\pi_\lambda^{\cA}})\subeq \oline{\cO_{\psi_\lambda^{\cA}}}^{w^*}$. 
This further implies that 
$\Ext^*(I_{\pi_\lambda^{\cA}})\subeq\oline{\cO_{\psi_\lambda^{\cA}}}^{\bn}=\cO_{\psi_\lambda^{\cA}}$, 
where the latter equality follows by Theorem~\ref{flags2}(ii). 

On the other hand, it is easily seen that the set $\Ext^*(I_{\pi_\lambda^{\cA}})$ is invariant under the coadjoint action of $\U(\cA)$. 
Therefore, if $\Ext^*(I_{\pi_\lambda^{\cA}})\ne\emptyset$, it follows by the above inclusion relation that we actually have 
$\Ext^*(I_{\pi_\lambda^{\cA}})=\cO_{\psi_\lambda^{\cA}}$. 
The inclusion $\Ext^*(I_{\pi_\lambda^{\cA}})
\subeq \Ext^*(\Inm_{\pi_\lambda^{\cA}})$ was already noted in 
Remark~\ref{extstar}. 
\end{prf}

For the next statement we recall that the $C^*$-algebra $\cA$ is said to be \emph{scattered} if every positive linear functional on $\cA$ is the sum of a sequence of pure functionals. 

\begin{cor}\label{scattered}
If $\cA$ is a scattered $C^*$-algebra, then the following assertions hold: 
\begin{description}
\item[\rm(i)] If $\lambda\in\cP$, then 
$\cO_{\psi_\lambda^{\cA}}=\Ext^*(I_{\pi_\lambda^{\cA}})
\subeq \Ext^*(\Inm_{\pi_\lambda^{\cA}})$.
\item[\rm(ii)] If $\lambda,\mu\in\cP$, then 
$$I_{\pi_\lambda}=I_{\pi_\mu}
  \iff \pi_\lambda^{\cA}\simeq \pi_\mu^{\cA}.$$
\end{description}
\end{cor}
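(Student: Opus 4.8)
The plan is to reduce part~(i) to Proposition~\ref{equality}, whose only missing hypothesis is the non-emptiness of $\Ext^*(I_{\pi_\lambda^{\cA}})$, and then to bootstrap part~(ii) from part~(i) together with the equivalence chain in Theorem~\ref{flags2}(iii). The scattered hypothesis will enter exactly once, namely to guarantee that this set of weak-$*$-continuity extreme points is non-empty.

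For part~(i), I would first invoke the characterization of scatteredness: a $C^*$-algebra $\cA$ is scattered if and only if its dual $\cA^*$ has the Radon--Nikod\'ym property (\cite{Chu81}). Since the Radon--Nikod\'ym property is inherited by norm-closed subspaces, and $\fu(\cA)'$ is isometrically isomorphic to the space of self-adjoint functionals on $\cA$, which is a norm-closed real subspace of the underlying real Banach space of $\cA^*$, it follows that $\fu(\cA)'$ also enjoys the Radon--Nikod\'ym property. Because $\fu(\cA)'$ is itself a dual Banach space (the dual of $\fu(\cA)$), Remark~\ref{KM} then applies: every non-empty weak-$*$-compact convex subset of $\fu(\cA)'$ is the weak-$*$-closed convex hull of its weak-$*$-strongly exposed points, and in particular has non-empty $\StrExp^*$. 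Applying this to $I_{\pi_\lambda^{\cA}}$, which is weak-$*$-compact, convex and non-empty since it contains $\psi_\lambda^{\cA}\vert_{\fu(\cA)}$ by Theorem~\ref{flags2}(i), yields $\StrExp^*(I_{\pi_\lambda^{\cA}})\ne\emptyset$, whence $\Ext^*(I_{\pi_\lambda^{\cA}})\ne\emptyset$ through the chain \eqref{ext_chain}. Proposition~\ref{equality} then delivers exactly the displayed equality and inclusion of part~(i).

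For part~(ii), the implication $\pi_\lambda^{\cA}\simeq\pi_\mu^{\cA}\Rightarrow I_{\pi_\lambda^{\cA}}=I_{\pi_\mu^{\cA}}$ is immediate, since unitarily equivalent representations share the same momentum map and hence the same momentum set. Conversely, assuming $I_{\pi_\lambda^{\cA}}=I_{\pi_\mu^{\cA}}$, I would pass to the weak-$*$-continuity extreme points on both sides and apply part~(i) to get $\cO_{\psi_\lambda^{\cA}}=\Ext^*(I_{\pi_\lambda^{\cA}})=\Ext^*(I_{\pi_\mu^{\cA}})=\cO_{\psi_\mu^{\cA}}$; the equivalence chain in Theorem~\ref{flags2}(iii) then converts this equality of coadjoint orbits into $\pi_\lambda^{\cA}\simeq\pi_\mu^{\cA}$.

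The main obstacle is the single genuine point in part~(i): verifying that $\fu(\cA)'$ inherits the Radon--Nikod\'ym property from $\cA^*$. This relies on correctly identifying $\fu(\cA)'$ with the self-adjoint functionals as a closed real subspace of $\cA^*$, and on two external inputs, namely Chu's scattered/Radon--Nikod\'ym equivalence and the permanence of that property under passage to closed subspaces. Once the non-emptiness of $\Ext^*(I_{\pi_\lambda^{\cA}})$ is secured, everything else is a formal consequence of the machinery already assembled in Proposition~\ref{equality} and Theorem~\ref{flags2}.
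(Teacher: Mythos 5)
Your proposal is correct and follows essentially the same route as the paper: Chu's theorem gives the Radon--Nikod\'ym property of $\cA^*$, Remark~\ref{KM} then yields $\StrExp^*(I_{\pi_\lambda^{\cA}})\ne\emptyset$ and hence $\Ext^*(I_{\pi_\lambda^{\cA}})\ne\emptyset$, Proposition~\ref{equality} gives (i), and (ii) follows by comparing the intrinsically defined sets $\Ext^*$ and invoking Theorem~\ref{flags2}(iii). The only difference is that you make explicit the passage of the Radon--Nikod\'ym property from $\cA^*$ to the dual space $\fu(\cA)'$ (via the identification with the closed real subspace of hermitian functionals), a step the paper leaves implicit; this is a correct and welcome addition.
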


\begin{prf}
It follows by the main theorem of \cite{Chu81} that the topological dual 
of $\cA$ has the Radon--Nikod\'ym property, and then $\Ext^*(I_{\pi_\lambda^{\cA}})\ne\emptyset$ by Remark~\ref{KM}. 
Therefore (i) follows by Proposition~\ref{equality}. 

For (ii), note that the implication ``$\Leftarrow$'' is obvious. 
Conversely, if $I_{\pi_\lambda^{\cA}}=I_{\pi_\mu^{\cA}}$, then 
$\Ext^*(\Inm_{\pi_\lambda^{\cA}})=\Ext^*(\Inm_{\pi_\mu^{\cA}})$, 
hence (i) shows that $\cO_{\psi_\lambda^{\cA}}=\cO_{\psi_\mu^{\cA}}$.  
Now Theorem~\ref{flags2} ensures that $\pi_\lambda^{\cA}\simeq \pi_\mu^{\cA}$. 
\end{prf}

The next corollary provides an alternative proof for  Proposition~\ref{prop:mostbasic}, and for this reason we resume the corresponding notation. 
This statement should also be compared with \cite[Thm. X.4.1(ii)]{Ne00}, 
which in particular describes representations of finite dimensional Lie groups 
for which the exposed points of the momentum sets coincide with the extreme points and constitute a coadjoint orbit. 

\begin{cor}\label{answer}
If $\lambda\in\cP$, then  
\begin{equation}\label{answer_eq}
\cO_{D_\lambda}\simeq\StrExp^*(I_\lambda)=\Dent^*(I_\lambda)=\Ext^*(I_\lambda)\subseteq\Ext(I_\lambda),
\end{equation}
and the latter inclusion may be strict. 
Moreover, for $\lambda,\mu\in\cP$ we have $I_\lambda=I_\mu$ if and only if $\pi_\lambda\simeq\pi_\mu$. 
\end{cor}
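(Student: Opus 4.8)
The plan is to reduce the whole corollary to the single identity $\Ext^*(I_\lambda)=\cO_{D_\lambda}$, from which the chain \eqref{answer_eq} and the concluding equivalence follow almost formally. Throughout I work under the isometric identification $\fu_\infty(\cH)'\cong\Herm_1(\cH)$ used in Remark~\ref{alternative} and in the proof of Theorem~\ref{flags2}(ii), under which $\psi_\lambda\vert_{\fu_\infty(\cH)}$ corresponds to $D_\lambda$ and the coadjoint orbit $\cO_{\psi_\lambda}$ corresponds to $\cO_{D_\lambda}$; the symbol $\simeq$ in \eqref{answer_eq} merely records this identification.

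First I would prove the inclusion $\Ext^*(I_\lambda)\subeq\cO_{D_\lambda}$. Applying Remark~\ref{extstar} to $\pi=\pi_\lambda$ gives $\Ext^*(I_\lambda)\subeq\Inm_{\pi_\lambda}\cap\Ext(I_\lambda)$, and by \eqref{extreme1} the right-hand side is exactly $\cO_{D_\lambda}$. For the reverse inclusion I would avoid constructing an exposing functional by hand and instead argue by transitivity. Since the topological dual $\Herm_1(\cH)$ has the Radon--Nikod\'ym property (Remark~\ref{KM}) and $I_\lambda$ is weak-$*$-compact, convex and nonempty, we get $\StrExp^*(I_\lambda)\ne\emptyset$. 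The coadjoint action of $\U(\cH)$ on $\fu_\infty(\cH)'\cong\Herm_1(\cH)$ is by maps that are simultaneously isometric and weak-$*$-continuous, and $I_\lambda$ is $\U(\cH)$-invariant; hence every set in Definition~\ref{ext_def} built from $I_\lambda$, in particular $\StrExp^*(I_\lambda)$, is $\U(\cH)$-invariant. Combining \eqref{ext_chain} with the inclusion just proved yields $\StrExp^*(I_\lambda)\subeq\Ext^*(I_\lambda)\subeq\cO_{D_\lambda}$, so $\StrExp^*(I_\lambda)$ is a nonempty $\U(\cH)$-invariant subset of the single orbit $\cO_{D_\lambda}$ and must therefore equal it. Squeezing \eqref{ext_chain} then gives
$$\cO_{D_\lambda}=\StrExp^*(I_\lambda)\subeq\Dent^*(I_\lambda)\subeq\Ext^*(I_\lambda)\subeq\cO_{D_\lambda},$$
so all four sets coincide, and the final inclusion $\cO_{D_\lambda}\subeq\Ext(I_\lambda)$ is already recorded in Remark~\ref{rem:2.7}.

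To see that this last inclusion can be strict I would take $\lambda=\eps_1$, so that $\pi_\lambda=\id_{\U_\infty(\cH)}$ and, by Example~\ref{ex_full}(c), $I_\lambda$ is identified with $\{S\in\Herm_1(\cH)\: 0\le S,\ \tr S\le 1\}$. Here $0$ is an extreme point, because a convex combination of non-negative operators vanishes only if each summand does, yet $0$ does not lie on the orbit $\cO_{D_{\eps_1}}$ of rank-one projections; hence $\cO_{D_\lambda}\subsetneq\Ext(I_\lambda)$. Finally, for the equivalence $I_\lambda=I_\mu\iff\pi_\lambda\simeq\pi_\mu$, the implication from the right is immediate since equivalent representations have identical momentum sets. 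Conversely, $\Ext^*(I_\lambda)$ is defined intrinsically from $I_\lambda\subeq\Herm_1(\cH)$, so $I_\lambda=I_\mu$ forces $\cO_{D_\lambda}=\Ext^*(I_\lambda)=\Ext^*(I_\mu)=\cO_{D_\mu}$; thus $D_\lambda$ and $D_\mu$ are unitarily conjugate, which gives $\vert\lambda^{-1}(n)\vert=\vert\mu^{-1}(n)\vert$ for every $n\ne0$ and hence $\mu\in\cW\lambda$, exactly as in the proof of Theorem~\ref{flags2}(iii); Theorem~\ref{thm:parti}(i) then yields $\pi_\lambda\simeq\pi_\mu$.

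The step I expect to be the main obstacle is the reverse inclusion $\cO_{D_\lambda}\subeq\Ext^*(I_\lambda)$. The transitivity argument makes it painless provided two facts are handled carefully: the nonemptiness of $\StrExp^*(I_\lambda)$, which genuinely relies on the Radon--Nikod\'ym property of the trace class rather than on an explicit exposing functional, and the verification that the coadjoint action is at once isometric and weak-$*$-continuous, so that all three refined extreme-point sets are $\U(\cH)$-invariant. This bookkeeping is where the real content sits.
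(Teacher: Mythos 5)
Your proposal is correct, and for the crucial inclusion $\cO_{D_\lambda}\subeq\StrExp^*(I_\lambda)$ it takes a genuinely different route from the paper. The paper verifies directly that $D_\lambda$ is weak-$*$-strongly exposed: it exhibits the explicit functional $f(X)=\Tr(D_\lambda X)$ and checks the strong-exposure condition by combining the Hilbert--Schmidt estimate $\|T\|_2\le\|D_\lambda\|_2$ from Remark~\ref{rem:2.7}, the norm-convergence Lemma~\ref{seq}, and the Radon--Riesz (Kadec--Klee) property of the trace class; the set-equality $\Ext^*(I_\lambda)=\cO_{D_\lambda}$ is then imported from Corollary~\ref{scattered} (via the scatteredness of $\C\1+K(\cH)$). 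You instead make the whole chain collapse by a soft argument: $\Ext^*(I_\lambda)\subeq I^\bn_\lambda\cap\Ext(I_\lambda)=\cO_{D_\lambda}$ from Remark~\ref{extstar} and \eqref{extreme1}, then nonemptiness of $\StrExp^*(I_\lambda)$ from the Radon--Nikod\'ym property of the trace class (the paper's own Remark~\ref{KM} licenses this, including the non-separable case), and finally $\U(\cH)$-invariance of $\StrExp^*(I_\lambda)$ inside the single orbit $\cO_{D_\lambda}$ forces equality, after which \eqref{ext_chain} squeezes everything. Your two flagged verifications (isometry plus weak-$*$-continuity of the coadjoint action, hence invariance of the refined extreme-point sets, and $\U(\cH)$-invariance of $I_\lambda$ itself) do hold and are implicitly used by the paper in Remarks~\ref{rem:2.7} and~\ref{proj}. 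What the trade-off buys: your argument is shorter and avoids the Kadec--Klee property entirely, but it leans on the full Phelps characterization of the RNP and produces no exposing functional; the paper's computation is longer but constructive, identifying $\Tr(D_\lambda\,\cdot)$ as the exposing functional (a fact echoed in Remark~\ref{rem:wexp}). Your strictness example $\lambda=\eps_1$ and the concluding equivalence (via the intrinsic nature of $\Ext^*$ and unitary conjugacy of $D_\lambda$, $D_\mu$) match the paper's reasoning.
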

 
\begin{prf}
Corollary~\ref{scattered} applies since  
the $C^*$-algebra $\cA=\C\1+ K(\cH)$ is scattered. 
Therefore, in view of Definition~\ref{ext_def} 
(see \eqref{ext_chain}), the equalities in \eqref{answer_eq} are obtained as soon as we have proved that $\cO_{D_\lambda}\subeq\StrExp^*(I_\lambda)$ 
(Corollary~\ref{scattered}). 
Since $\StrExp^*(I_\lambda)$ is $\U(\cH)$-invariant, it suffices to check that 
$D_\lambda\in\StrExp^*(I_\lambda)$. 

We now use the method of Remark~\ref{rem:2.7}. 
To this end, let $f\colon \Herm_1(\cH)\to\R$, $f(X)=\Tr(D_\lambda X)$, 
and let 
$\{T_n\}_{n\ge1}$ be a sequence in $I_\lambda$ ($\subeq\Herm_1(\cH)$) 
such that $\lim\limits_{n\to\infty}f(T_n)= f(D_\lambda)$. 
Since $\Vert T_n\Vert_2\le\Vert D_\lambda\Vert_2$ 
(see Remark~\ref{rem:2.7}), it then follows at once that 
$\lim\limits_{n\to\infty}\Vert T_n-D_\lambda\Vert_2=0$. 
We get in particular
$\lim\limits_{n\to\infty}T_n=D_\lambda$ in the weak operator topology. 

On the other hand, $I_\lambda$ is contained in the ball 
centered at $0\in \Herm_1(\cH)$ with radius $\Vert D_\lambda\Vert_1$, 
as a direct consequence of Theorem~\ref{flags2}(i). 
It is easily seen that the weak operator topology coincides with the weak-$*$-topology on any ball in $\Herm_1(\cH)$, and then 
$\lim\limits_{n\to\infty}T_n=D_\lambda$ in the weak-$*$-topology of 
$\Herm_1(\cH)$. 
By using Lemma~\ref{seq} we now get 
$\lim\limits_{n\to\infty}\Vert T_n\Vert_1=\Vert D_\lambda\Vert_1$. 

Now the conclusions of the above two paragraphs imply 
$\lim\limits_{n\to\infty}\Vert T_n-D_\lambda\Vert_1=0$ 
since the trace class has the Radon--Riesz property 
(also called the Kadec--Klee property or the Kadets--Klee property), 
i.e.,  every weakly convergent sequence $x_n \to x$ with $\|x_n\| \to \|x\|$ 
converges; 
see \cite{Ar81} and \cite{Si81} for the case of separable Hilbert spaces and \cite{Le90} for a stronger property in the general case. 

Thus $D_\lambda\in\StrExp^*(I_\lambda)$, 
and this completes the proof of \eqref{answer_eq}. 
To see that the final inclusion in \eqref{answer_eq} may be strict, 
note that if $0\le \lambda\in\cP$, 
then we have $0\in \Ext(I_\lambda)\setminus\cO_{D_\lambda}$. 
\end{prf}

\appendix 

\section{Schur--Weyl duality for infinite dimensional spaces} 
\mlabel{app:a}

In this appendix we collect some general remarks on the 
decomposition of $V^{\otimes k}$ under $\GL(V)$ for an infinite 
dimensional vector space $V$. 
In particular, we explain how 
this can be adapted to the decomposition of finite tensor 
products of Hilbert spaces $\cH$ under the action of $\U(\cH)$, resp., 
$\GL(\cH)$, which was considered in \cite{Se57}, \cite{Ki73}, \cite{Ol79}, and so on. 


Let $V$ be a complex vector space and 
$V^{\otimes k}$ the $k$th tensor power of $V$. 
Clearly, the product group $\GL(V) \times S_k$ acts on this space, 
and since $S_k$ is finite, $V^{\otimes k}$ is a semisimple $S_k$-module. 
We identify the set $\hat S_k$ of equivalence class of simple $S_k$-modules 
with the set $\Part(k)$ of partitions of $k$ 
and write $M^\lambda$ for the simple $S_k$-module 
corresponding to the partition $\lambda = (\lambda_1, \ldots, \lambda_n)$. 

\begin{rem} For any inclusion $V_1 \into V_2$ of vector spaces, 
we obtain an inclusion of $S_k$-modules 
$V_1^{\otimes k} \into V_2^{\otimes k}$. 
\end{rem}

Combining the preceding remark with \cite[Thm.~9.1.2]{GW98}, 
it follows that if $\dim V = \infty$, then 
each irreducible $S_k$-module occurs in $V^{\otimes k}$, and 
if \break $\dim V < \infty$, then all modules corresponding to 
partitions $\lambda \in \Part(k,n)$ into at most $n = \dim V$ pieces 
occur. 

For $\lambda \in \Part(k)$, let $P_\lambda \in \C[S_k]$ denote 
the corresponding central projection, so that 
$P_\lambda V^{\otimes k}$ is the isotypic component of type 
$\lambda$. From finite dimensional Schur--Weyl Theory,  
we know that for each finite dimensional subspace 
$F \subeq V$, the space 
$P_\lambda(F^{\otimes k})$ is an isotypic $\GL(F)$-module 
whose multiplicity space is an $S_k$-module isomorphic 
to $M^\lambda$. Moreover, the $S_k$-multiplicity space 
$$ \bS_\lambda(F) := \Hom_{S_k}(M^\lambda, F^{\otimes k}) $$
is a simple $\GL(F)$-module, where $\GL(F)$ acts by composition in the 
range. Further, the evaluation map induces an isomorphism 
$$ \bS_\lambda(F) \otimes M^\lambda \to P_\lambda(F^{\otimes k}) $$
of $\GL(F) \times S_k$-modules. 

\begin{thm} $\bS_\lambda(V) := \Hom_{S_k}(M^\lambda, V^{\otimes k})$ is an irreducible 
$\GL(V)$-module. 
\end{thm}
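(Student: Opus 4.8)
The plan is to deduce this infinite-dimensional statement formally from the finite-dimensional Schur--Weyl theory recalled above, exploiting that $V$ is the directed union of its finite-dimensional subspaces. The key structural observation is that $V^{\otimes k} = \bigcup_F F^{\otimes k}$, the union being taken over all finite-dimensional subspaces $F \subeq V$ (directed by inclusion), because any finite collection of elements of $V^{\otimes k}$ involves only finitely many vectors of $V$. Since $\dim M^\lambda < \infty$, every $S_k$-homomorphism $\phi \: M^\lambda \to V^{\otimes k}$ has finite-dimensional image; collecting the finitely many vectors of $V$ occurring in a spanning set of $\im\phi$, one produces a finite-dimensional subspace $F$ with $\im \phi \subeq F^{\otimes k}$, and $F^{\otimes k}$ is automatically $S_k$-invariant since $S_k$ permutes tensor factors. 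Hence $\phi \in \bS_\lambda(F)$, and I obtain
\[ \bS_\lambda(V) = \bigcup_{F} \bS_\lambda(F), \]
again a directed union over the finite-dimensional $F \subeq V$.

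Next I would record how each $\GL(F)$ acts. For a fixed finite-dimensional $F$, choose a complement $V = F \oplus F'$ and embed $\GL(F) \into \GL(V)$ via $g \mapsto g \oplus \id_{F'}$, exactly as in the embeddings used in Section~\ref{Sect2}. For $\phi \in \bS_\lambda(F)$, i.e.\ with $\im \phi \subeq F^{\otimes k}$, the operator $g \oplus \id_{F'}$ acts on $V^{\otimes k}$ by $(g\oplus\id_{F'})^{\otimes k}$, whose restriction to $F^{\otimes k}$ is $g^{\otimes k}$. Thus the $\GL(V)$-action on $\bS_\lambda(V)$, restricted to this copy of $\GL(F)$, preserves the subspace $\bS_\lambda(F)$ and agrees there with the natural $\GL(F)$-action on $\bS_\lambda(F) = \Hom_{S_k}(M^\lambda, F^{\otimes k})$. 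By the finite-dimensional Schur--Weyl theory recalled above, this $\GL(F)$-module is simple.

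With these preparations the irreducibility is immediate. Let $W \subeq \bS_\lambda(V)$ be a nonzero $\GL(V)$-submodule and pick $0 \neq w \in W$. By the first step, $w \in \bS_\lambda(F_0)$ for some finite-dimensional $F_0$. For any finite-dimensional $F \supeq F_0$ we have $w \in \bS_\lambda(F)$, and since $W$ is invariant under the embedded $\GL(F)$ while $\bS_\lambda(F)$ is a simple $\GL(F)$-module, the $\GL(F)$-submodule generated by $w$ equals $\bS_\lambda(F)$; hence $\bS_\lambda(F) \subeq W$. As every element of $\bS_\lambda(V)$ lies in $\bS_\lambda(F_1)$ for some $F_1$, and therefore in $\bS_\lambda(F_0 + F_1)$ with $F_0 + F_1 \supeq F_0$, taking the union over all finite-dimensional $F \supeq F_0$ yields $W = \bS_\lambda(V)$.

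The only genuinely substantial input is the finite-dimensional Schur--Weyl theorem supplying simplicity of $\bS_\lambda(F)$; everything else is a directed-limit argument, the purely algebraic analogue of the density argument used for $\GL_\infty(\cH)$ in the main text. The one point requiring care — the main, and rather mild, obstacle — is the compatibility in the second step: one must verify that the a priori choice-dependent embedding $\GL(F) \into \GL(V)$ induces on $\bS_\lambda(F) \subeq \bS_\lambda(V)$ precisely the intrinsic simple-module structure, for otherwise simplicity of $\bS_\lambda(F)$ could not be transferred. This is exactly what the identity $(g\oplus\id_{F'})^{\otimes k}|_{F^{\otimes k}} = g^{\otimes k}$ guarantees, independently of the chosen complement $F'$.
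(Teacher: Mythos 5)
Your proposal is correct and follows essentially the same route as the paper: both arguments reduce to the finite-dimensional Schur--Weyl theorem by observing that any $S_k$-homomorphism $M^\lambda \to V^{\otimes k}$ has image inside $F^{\otimes k}$ for some finite-dimensional $F \subeq V$, that $\bS_\lambda(V)$ is the directed union of the simple $\GL(F)$-modules $\bS_\lambda(F)$, and that every element of $\GL(F)$ extends to $\GL(V)$. The paper phrases this as ``every non-zero element is cyclic'' while you phrase it as ``every non-zero submodule is everything,'' and you additionally spell out the compatibility of the embedded $\GL(F)$-action with the intrinsic one, but these are only presentational differences.
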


\begin{prf} We show that each 
non-zero element $0 \not= a \in \bS_\lambda(V)$ is a cyclic vector. 
Since $M^\lambda$ is finite dimensional, $\im(a) \subeq V^{\otimes k}$ 
is finite dimensional, hence contained in $F^{\otimes k}$ for some 
finite dimensional subspace $F \subeq V$. This means that 
$a \in \bS_\lambda(F)$. 

For any finite dimensional subspace $E \subeq V$ containing $F$, 
the space $\bS_\lambda(E)$ is a simple $\GL(E)$-module, and 
since every element of $\GL(E)$ extends to an element of 
$\GL(V)$, it follows that 
$\Spann(\GL(V)a) \supeq \bS_\lambda(E)$. Since $E$ was arbitrary 
and $\bS_\lambda(V)$ is the union of all $\bS_\lambda(E)$, 
$a$ is  a cyclic vector in $\bS_\lambda(V)$. 
This proves that $\bS_\lambda(V)$ is irreducible. 
\end{prf}

\begin{cor} Under the action of $\GL(V) \times S_k$, we have 
the following decomposition of 
$V^{\otimes k}$ into simple submodules 
$$ V^{\otimes k} \cong \bigoplus_{\lambda \in \Part(k)} 
\bS_\lambda(V) \otimes M^\lambda. $$
\end{cor}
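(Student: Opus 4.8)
The plan is to combine the isotypic decomposition of $V^{\otimes k}$ as an $S_k$-module with the identification of the multiplicity spaces as the modules $\bS_\lambda(V)$, and then to use the preceding theorem to conclude that the resulting summands are simple over $\GL(V) \times S_k$. First I would use that $V^{\otimes k}$ is a semisimple $S_k$-module (every module over the semisimple algebra $\C[S_k]$ is semisimple in characteristic $0$), so that the central idempotents $P_\lambda$ give the isotypic decomposition $V^{\otimes k} = \bigoplus_{\lambda \in \Part(k)} P_\lambda V^{\otimes k}$ into the isotypic components of type $\lambda$. Since the diagonal $\GL(V)$-action commutes with the permutation action of $S_k$, each $P_\lambda V^{\otimes k}$ is a $\GL(V) \times S_k$-submodule.

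Second, for each $\lambda$ I would consider the evaluation map $\ev \colon \bS_\lambda(V) \otimes M^\lambda \to V^{\otimes k}$, $a \otimes m \mapsto a(m)$, whose image lies in $P_\lambda V^{\otimes k}$. Letting $\GL(V)$ act on $\bS_\lambda(V) = \Hom_{S_k}(M^\lambda, V^{\otimes k})$ by post-composition and on $M^\lambda$ trivially, and $S_k$ act on $M^\lambda$ and trivially on $\bS_\lambda(V)$, the identities $(g a)(m) = g(a(m))$ and $a(\sigma m) = \sigma(a(m))$ show that $\ev$ is $\GL(V) \times S_k$-equivariant. The substantive point is that $\ev$ is an isomorphism onto $P_\lambda V^{\otimes k}$: this is the standard multiplicity-space statement for semisimple modules over the finite-dimensional semisimple algebra $\C[S_k]$, and, because $\End_{S_k}(M^\lambda) = \C$, it holds verbatim whether or not $V^{\otimes k}$ is finite-dimensional. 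If one prefers a concrete route, the isomorphism can be obtained by passing to the direct limit over finite-dimensional subspaces $F \subeq V$ of the already-recorded finite-dimensional isomorphisms $\bS_\lambda(F) \otimes M^\lambda \cong P_\lambda(F^{\otimes k})$, using $V^{\otimes k} = \bigcup_F F^{\otimes k}$ together with the compatibility of $P_\lambda$ and of $\bS_\lambda(-)$ with the inclusions $F \into F'$.

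Finally I would check that each summand $\bS_\lambda(V) \otimes M^\lambda$ is simple over $\GL(V) \times S_k$. Given a nonzero submodule $U$, it is in particular an $S_k$-submodule of the $M^\lambda$-isotypic module $\bS_\lambda(V) \otimes M^\lambda$; since $M^\lambda$ is absolutely simple, $U = W \otimes M^\lambda$ for the subspace $W := \Hom_{S_k}(M^\lambda, U) \subeq \bS_\lambda(V)$. As $\GL(V)$ acts only on the first tensor factor, $U$ is $\GL(V)$-stable precisely when $W$ is, and the irreducibility of $\bS_\lambda(V)$ established in the preceding theorem forces $W \in \{0, \bS_\lambda(V)\}$. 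Summing the isomorphisms $\ev$ over all $\lambda \in \Part(k)$ then yields the asserted decomposition into simple submodules. The only place requiring genuine care is the infinite-dimensionality entering the second step, but as noted it is absorbed by the general semisimple-module theory (or by the direct-limit reduction to the finite-dimensional case); everything else is formal once $\bS_\lambda(V)$ is known to be irreducible.
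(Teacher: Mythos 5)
Your proposal is correct and follows exactly the route the paper intends: the paper states this corollary without a separate proof, having just assembled the same ingredients you use (the $S_k$-isotypic decomposition via the central projections $P_\lambda$, the identification of the multiplicity space with $\bS_\lambda(V)$ via the evaluation map, compatible with the finite-dimensional isomorphisms $\bS_\lambda(F)\otimes M^\lambda \cong P_\lambda(F^{\otimes k})$, and the irreducibility of $\bS_\lambda(V)$ from the preceding theorem). Your third step, deducing simplicity of $\bS_\lambda(V)\otimes M^\lambda$ over $\GL(V)\times S_k$ from the absolute simplicity of $M^\lambda$ and the irreducibility of $\bS_\lambda(V)$, is the standard argument and is carried out correctly.
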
 

\begin{rem} We obtain the same decomposition with respect 
to the smaller group $\GL_f(V)$ of all invertible 
linear maps $g$ for which $g - \id_V$ is of finite rank. 
The same arguments as for $\GL(V)$ apply. 
\end{rem}

The next statement is well known (its version for topological groups 
can be found for instance in \cite{KS77} or \cite{Ol90}). 
We include its proof for the sake of completeness. 

\begin{prop}\label{inductive}
Let $S$ be a topological involutive semigroup and $(S_i)_{i\in I}$ 
a directed family of involutive subsemigroups of $S$ such that 
$\bigcup_{i\in I}S_i$ is dense in $S$.
Also let $\cH$ be a complex Hilbert space and $(\cH_i)_{i\in I}$ a 
directed family of closed subspaces of $\cH$ such that 
$\bigcup_{i\in I} \cH_i$ is dense. 
Assume that $\rho\colon S\to B(\cH)$ is a strongly continuous 
$*$-representation of $S$ such that 
for each $i\in I$
the subspace $\cH_i$ is invariant invariant under $\rho(S_i)$ 
and the representation 
$\rho_i\colon S_i\to B(\cH_i)$, $g\mapsto \rho(g)|_{\cH_i}$,
is irreducible.  
Then the representation $\rho$ is irreducible. 
\end{prop}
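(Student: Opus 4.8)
The plan is to prove irreducibility by showing that the commutant $\rho(S)' \subseteq B(\cH)$ is trivial. Since $S$ is involutive and $\rho$ is a $*$-representation, this suffices: a nonzero proper closed $\rho(S)$-invariant subspace $\cK$ would be invariant under all $\rho(g)^\ast = \rho(g^\ast)$ as well, hence reducing, so its orthogonal projection would be a nonscalar element of $\rho(S)'$. So I fix $T \in B(\cH)$ commuting with every $\rho(g)$, $g \in S$, and aim to show $T = c\1$.

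The first key step is to observe that each $\cH_i$ reduces $\rho(S_i)$. It is invariant under $\rho(S_i)$ by hypothesis, and since $S_i = S_i^\ast$ and $\rho$ is a $*$-representation it is also invariant under $\rho(g)^\ast = \rho(g^\ast)$ for $g \in S_i$; hence the orthogonal projection $P_i$ onto $\cH_i$ commutes with $\rho(g)$ for all $g \in S_i$. Because $S_i \subseteq S$, the fixed operator $T$ lies in $\rho(S_i)'$, and a short computation (using $T\rho(g) = \rho(g)T$ together with $P_i \rho(g) = \rho(g)P_i$) then shows that the compression $T_i := P_i T|_{\cH_i} \in B(\cH_i)$ commutes with $\rho_i(g) = \rho(g)|_{\cH_i}$ for every $g \in S_i$. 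Since $\rho_i$ is an irreducible $*$-representation, Schur's lemma gives $T_i = c_i \1_{\cH_i}$ for some scalar $c_i \in \C$.

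Next I would glue these scalars and pass to the global operator. Using directedness, for $i \le k$ we have $\cH_i \subseteq \cH_k$ and $P_i P_k = P_i$; compressing the relation $P_k T v = c_k v$ (valid for $v \in \cH_k$) to $\cH_i$ yields $P_i T v = c_k v$, while $P_i T v = c_i v$ for $v \in \cH_i$, so $c_i = c_k$ whenever $\cH_i \ne \{0\}$. Directedness then forces a single common value $c$. Finally, since the $\cH_j$ increase to a dense subspace, the projections $P_j$ converge strongly to $\1$; thus for $v \in \cH_i$ and $j \ge i$ we have $P_j T v = c v$, and letting $j$ run cofinally gives $T v = \lim_j P_j T v = c v$. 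As $\bigcup_i \cH_i$ is dense and $T$ is bounded, $T = c\1$, so $\rho(S)' = \C\1$ and $\rho$ is irreducible.

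The main obstacle is conceptual rather than computational: Schur's lemma only controls the ``diagonal block'' $P_i T|_{\cH_i}$ of $T$, and a priori $T$ might carry $\cH_i$ outside itself, so scalarity of the blocks does not immediately give scalarity of $T$. The two facts that resolve this are that $\cH_i$ is genuinely reducing for $\rho(S_i)$ (so $P_i \in \rho(S_i)'$, which is what makes the compression land in the commutant of $\rho_i$) and that $P_j \to \1$ strongly (which recovers $Tv$ from its blocks $P_j Tv$). I note in passing that the density of $\bigcup_i S_i$ in $S$ and the strong continuity of $\rho$ are not actually needed for this direction, since $T \in \rho(S)'$ automatically lies in each $\rho(S_i)'$; they serve only to make the hypotheses natural.
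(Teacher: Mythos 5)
Your proof is correct and follows essentially the same route as the paper's: reduce to showing $\rho(S)'=\C\1$, compress an element of the commutant to each $\cH_i$ using that $P_i\in\rho(S_i)'$, invoke irreducibility of $\rho_i$ to get scalar blocks, match the scalars via directedness, and conclude from $P_i\nearrow\1$ strongly. Your closing observation that the density of $\bigcup_i S_i$ and the strong continuity of $\rho$ are not actually used is accurate; the paper's argument does not use them either.
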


\begin{prf}
It follows at once that the subset $\rho(S)$ of $ B(\cH)$ is self-adjoint. 
Thus its commutant $\rho(S)'$ is a von Neumann subalgebra of $ B(\cH)$, 
and what we have to prove is that $\rho(S)'=\C\1$. 

To this end, denote by $p_i\in B(\cH)$ the orthogonal projection on $\cH_i$ 
for all $i\in I$. 
Then $p_i\nearrow\1$ in the strong operator topology for $i\in I$. 
Now let $a\in\rho(S)'$ arbitrary and fix $i\in I$ for the moment. 
Then $a\in\rho(S_i)'$ and $p_i\in\rho(S_i)'$, 
so that $p_ia|_{\cH_i}\in\rho_i(S_i)'$. 
Since $\rho_i$ is an irreducible representation, 
we have $\rho_i(S_i)'={\mathbb C}\1\subseteq B(\cH_i)$, 
so there exists $z_i\in{\mathbb C}$ with $p_ia|_{\cH_i}=z_i\1\in B(\cH_i)$. 
Consequently $p_iap_i=z_ip_i$. 
If we let $n$ run through the nonnegative integers, 
we thus get a family of complex numbers $\{z_i\}_{i\in I}$. 

On the other hand, for all $i,j\in I$ with $i\le j$ we have $p_i\le p_j$, 
that is, $p_ip_j=p_jp_i=p_i$,
hence 
\begin{equation*}
z_ip_i=p_iap_i=p_ip_jap_jp_i=z_jp_ip_jp_i=z_jp_i.
\end{equation*}
This proves that $z_i=z_j$ whenever $i\le j$. 
Since $I$ is a directed set, 
it follows that actually $z_i=z_j$ for all $i,j\in I$, 
hence there exists $z_0\in\C$ such that $z_i=z_0$ for all $i\in I$. 
Therefore  
$p_iap_i=z_0p_i$ for every $i\in I$. 
Since $p_i\nearrow\1$ in the strong operator topology for $i\in I$, 
it follows that $a=z_0\1$. 
Thus $\rho(S)'={\mathbb C}\1$, and this completes the proof.
\end{prf}

\begin{rem} 
Let $\cH$ be a Hilbert space and 
$\cH^{\otimes k}$ be the Hilbert space $k$-fold tensor product. 
Then the action of the group $\U(\cH) \times S_k$ on this Hilbert 
space is unitary. 

If $F \subeq \cH$ is a finite dimensional subspace, then 
$\bS_\lambda(F) \subeq F^{\otimes k}$ is a simple 
$\U(F)$-module because $\GL(F) \cong \U(F)_\C$ and 
it is simple under $\GL(F)$. 
Next note that the union of all the subspaces $\bS_\lambda(F)$ of 
$\bS_\lambda(\cH) := \Hom_{S^k}(M^\lambda,\cH^{\otimes k})$ 
is a dense subspace, and that each unitary operator 
$u \in \U(F)$ extends to some element of $\U(\cH)$. 
Therefore Proposition~\ref{inductive} implies that the 
representation of $\U(\cH)$ on $\bS_\lambda(\cH)$ 
is irreducible. 

The same argument also implies that the representation 
of the subgroup $\U_\infty(\cH) := \U(\cH) \cap (\1 + K(\cH))$ 
on $\bS_\lambda(\cH)$ is irreducible. 
If $\cA \subeq B(\cH)$ is a unital $C^*$-algebra acting irreducibly 
on $\cH$, then it also follows from Theorem~\ref{thm:1.3} 
that the representation of $\U(\cA)$ on  $\bS_\lambda(\cH)$ is irreducible. 
\end{rem}

\section{Weyl group orbits and their convex hulls} \mlabel{app:b}

In this section we discuss the convex geometry of the 
weak-$*$-closed convex hull $\co(\lambda)$ of $\cW\lambda$ in 
$\ell^1(J,\R)$ and of the norm-closed convex hull $\co^\bn(\lambda)$ 
which is a subset of $\co(\lambda)$. 

We start with a general observation concerning finite reflection 
groups. 
\begin{lem}\label{lemma_app2} 
Let $\cW \subeq \GL(V)$ be a finite reflection 
group whose reflections have the form 
$$ s_\alpha(v) = v - \alpha(v)\check \alpha, \quad 
\alpha \in V^*, \check\alpha \in V, \alpha(\check \alpha)= 2, 
\alpha \in \Delta. $$
Then, for each $\lambda \in V^*$, we have 
$$ \cW\lambda \subeq \lambda - C_\lambda 
\quad \mbox{ for } \quad 
C_\lambda := \cone(\{ \alpha \in \Delta \: \lambda(\check \alpha) > 0\}) $$
and the function $f \: \cW \to \R, w \mapsto (w\lambda)(x)$ is maximal 
in $\1$ if and only if $x \in C_\lambda^\star$. 
\end{lem}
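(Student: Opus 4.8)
The plan is to reduce both assertions to a single symmetric statement about the orbit $\cW x$ and then to prove that. I would work throughout with the contragredient action of $\cW$ on $V^*$, under which a reflection acts by $s_\alpha\mu = \mu - \mu(\check\alpha)\alpha$; already for a single reflection $\lambda - s_\alpha\lambda = \lambda(\check\alpha)\alpha$ lies in $C_\lambda$, since when $\lambda(\check\alpha)<0$ the opposite root $-\alpha\in\Delta$ has coroot $-\check\alpha$ with $\lambda(-\check\alpha)>0$, so that $\lambda(\check\alpha)\alpha = \lambda(-\check\alpha)(-\alpha)$. Because $\cW$ is finite, $\Delta$ is finite and $C_\lambda=\cone(\{\alpha\in\Delta:\lambda(\check\alpha)>0\})$ is a finitely generated, hence weak-$*$-closed, convex cone; the bipolar theorem for the pairing $(V,V^*)$ therefore gives $C_\lambda=(C_\lambda^\star)^\star$. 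Consequently the inclusion $\cW\lambda\subseteq\lambda-C_\lambda$, i.e.\ $\lambda-w\lambda\in C_\lambda$ for all $w$, is equivalent to $(\lambda-w\lambda)(x)\ge 0$ for all $w\in\cW$ and all $x\in C_\lambda^\star$. Since $(w\lambda)(x)=\lambda(w^{-1}x)$, both the maximality criterion for $w\mapsto(w\lambda)(x)$ and the inclusion are captured by the single claim
$$(\star)\qquad \lambda(x)=\max_{w\in\cW}\lambda(wx)\iff x\in C_\lambda^\star .$$

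To prove $(\star)$, the forward implication is immediate: if $x$ maximizes $\lambda$ on its orbit, then $\lambda(x)\ge\lambda(s_\beta x)=\lambda(x)-\beta(x)\lambda(\check\beta)$ for every $\beta\in\Delta$, which forces $\beta(x)\lambda(\check\beta)\ge 0$, and taking $\beta$ with $\lambda(\check\beta)>0$ yields $\beta(x)\ge 0$, i.e.\ $x\in C_\lambda^\star$. For the converse I would take any maximizer $y=wx$ of $\lambda$ on the orbit; by the direction just proved $y\in C_\lambda^\star$ as well, so it suffices to show that $\lambda$ is constant on $(\cW x)\cap C_\lambda^\star$, for then $\lambda(x)=\lambda(y)=\max_{w}\lambda(wx)$.

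This constancy is the main obstacle, precisely because $C_\lambda^\star$ is strictly larger than the dominant chamber $\overline{C}=\{x:\alpha(x)\ge 0\ \forall\alpha\in\Delta^+\}$ when $\lambda$ lies on a wall: telescoping $\lambda-w\lambda$ along a reduced word produces increments supported on roots $\alpha$ with $\lambda(\check\alpha)=0$, which need not lie in $C_\lambda$, so a plain length induction breaks down. To circumvent this I would use the subgroup $\cW_0:=\langle s_\alpha:\alpha\in\Delta,\ \lambda(\check\alpha)=0\rangle$, which fixes $\lambda$ and hence preserves both $\{\alpha:\lambda(\check\alpha)>0\}$ and $C_\lambda^\star$. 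Choosing a positive system $\Delta^+$ for which $\lambda$ is dominant and setting $\Delta_0^+:=\{\alpha\in\Delta^+:\lambda(\check\alpha)=0\}$, one has the disjoint decomposition $\Delta^+=\{\alpha\in\Delta^+:\lambda(\check\alpha)>0\}\sqcup\Delta_0^+$, and $\cW_0$ is the finite reflection group of the subsystem $\{\alpha\in\Delta:\lambda(\check\alpha)=0\}$. Hence every $x\in C_\lambda^\star$ can be moved by some $\sigma\in\cW_0$ into the $\Delta_0^+$-dominant chamber; as $\sigma$ preserves $C_\lambda^\star$, the point $\sigma x$ is then dominant for all of $\Delta^+$, so $\sigma x\in\overline{C}$, while $\lambda(\sigma x)=(\sigma^{-1}\lambda)(x)=\lambda(x)$. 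Since $\overline{C}$ is a strict fundamental domain for $\cW$, any two orbit-equivalent points of $C_\lambda^\star$ share the same dominant representative and thus the same $\lambda$-value. This yields the constancy, completes $(\star)$, and, together with the bipolar reformulation, establishes both the inclusion $\cW\lambda\subseteq\lambda-C_\lambda$ and the maximality criterion.
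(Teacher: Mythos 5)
Your proof is correct, and it takes a genuinely different route from the paper's. The paper derives the inclusion $\cW\lambda\subeq\lambda-C_\lambda$ by quoting \cite[Prop.~V.2.7]{Ne00} (which gives $\cW\lambda\subeq\lambda-C^\star$ for each single chamber $C$ with $\lambda\in\check C^\star$) and then intersecting over all such chambers: it identifies their union as $\cW_\lambda\oline{C_0}$ using simple transitivity of $\cW$ on chambers, computes the dual cone via the parabolic subsystem $\Delta_\lambda$, and only afterwards deduces the maximality criterion. You instead dualize at the outset: since $\Delta$ is finite, $C_\lambda$ is a finitely generated, hence bipolar-closed, cone, so both assertions collapse into the single equivalence $\lambda(x)=\max_w\lambda(wx)\iff x\in C_\lambda^\star$. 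The forward implication is the usual test against reflections (which is also how the paper closes its proof), and the substantive converse is handled by moving a point of $C_\lambda^\star$ into the closed dominant chamber $\oline{C}$ using the subgroup $\cW_0$ generated by reflections fixing $\lambda$, then invoking that $\oline{C}$ is a strict fundamental domain to get constancy of $\lambda$ on $(\cW x)\cap C_\lambda^\star$. The two arguments are cousins---both ultimately exploit the stabilizer of $\lambda$ and the wall roots $\lambda(\check\alpha)=0$---but yours is self-contained (no appeal to the cited proposition), makes the degenerate wall case explicit, and packages the inclusion and the maximality statement as literally dual to one another, whereas the paper's is shorter granted the external result. All the individual steps you use (that $\cW_0$ preserves $\{\alpha:\lambda(\check\alpha)>0\}$ and hence $C_\lambda^\star$, that $\Delta^+$ splits as the wall roots plus the roots with $\lambda(\check\alpha)>0$ when $\lambda$ is dominant, and the fundamental-domain property) check out.
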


\begin{prf} This is a refinement of \cite[Prop.~V.2.7]{Ne00}, which 
asserts that for each chamber $C \subeq V$ with 
$$\lambda \in \check C^\star, \quad 
\check C = \cone\big(\{ \check \alpha \: \alpha(C) \subeq \R_+\}\big), $$
we have 
$\cW\lambda \subeq \lambda - C^\star.$
From this relation we immediately derive that 
$$ \cW\lambda 
\subeq \lambda - \bigcap_{\lambda \in \check C^\star} C^\star  
=  \lambda - \Big(\bigcup_{\lambda \in \check C^\star} C\Big)^\star.$$

Let $\Delta_C := \{ \alpha \in \Delta \: \alpha(C) \subeq \R_+\}$ denote 
the positive system defined by the chamber $C$. 
Then $\lambda \in \check C^\star$ is equivalent to 
$$ \lambda(\check \alpha) \geq 0 \quad \mbox{ for } \quad 
\alpha \in \Delta_C.$$  
Now $\Delta_\lambda := \{ \alpha \in \Delta \: \lambda(\check \alpha) 
\geq 0\}$ is a parabolic system of roots and 
the condition above is equivalent to 
$\Delta_C \subeq \Delta_\lambda$. 

Since each chamber $\check C^\star \subeq V^*$ is a fundamental 
domain for the action of $\cW$ on $V^*$ and $\cW$ acts simply 
transitive on the set of chambers (\cite{Hu92}), the 
set of all chambers containing $\lambda$ coincides with the 
orbit of $\check C_0^\star$ for a fixed chamber $C_0$ satisfying 
this condition under the stabilizer group 
$\cW_\lambda$. 
Accordingly 
$$ \bigcup_{\lambda \in \check C^\star} C 
= \cW_\lambda \oline{C_0}, $$
and the dual cone of this set is spanned by all roots 
in 
$$ \bigcap_{w \in \cW_\lambda} w\Delta_{C_0} 
= \{ \alpha \in \Delta_{C_0} \: \cW_\lambda \alpha \subeq \Delta_{C_0}\}.$$
If $\alpha \in \Delta_{C_0}$ satisfies $\lambda(\check \alpha)=0$, 
then $s_\alpha \in \cW_\lambda$ and $s_\alpha \alpha = -\alpha\not\in \Delta_{C_0}$ implies that 
$$ \bigcap_{w \in \cW_\lambda} w\Delta_{C_0} 
\subeq \{ \alpha \in \Delta_{C_0} \: \lambda(\check \alpha) > 0 \} $$
and the converse inclusion holds trivially. We conclude that 
$$ \cW\lambda \subeq \lambda - \cone\big(\{ \alpha \in \Delta \: 
\lambda(\check \alpha) > 0 \}\big) = \lambda - C_\lambda. $$

This implies that for each $x \in C_\lambda^\star$, the function 
$f$ is maximal in $w = \1$. If, conversely, 
this is the case, then 
$$ f(s_\alpha) = (s_\alpha\lambda)(x) 
=\lambda(x) - \lambda(\check\alpha)\alpha(x) \geq \lambda(x) $$
implies that $\alpha(x) \geq 0$ for $\lambda(\check \alpha) > 0$, i.e., 
$x \in C_\lambda^\star$. 
\end{prf}

Now we turn to the action of $\cW = S_{(J)}$ on the Banach space 
$\ell^1(J,\R)$. We define for $k \in \N$ and  $\mu \in \ell^1(J,\R)$
\[ L_k(\mu) := \sup \{ \mu_{j_1} + \cdots + \mu_{j_k} \: j_i \in J, 
|\{ j_1,\ldots, j_k\}| = k\}.\] 
It follows immediately from the definition that $L_k$ 
is an $S_J$-invariant weak-$*$ lower semicontinuous convex function 
satisfying $L_k(\mu) \leq \|\mu\|_1$. 
From \cite[Lemma~2.3]{Neu99} we recall that 
\begin{equation}\label{lkpm}
 L_k(\lambda) = L_k(\lambda_+) \quad \mbox{ and } \quad 
 L_k(-\lambda) = L_k(\lambda_-),
\end{equation}
which further implies that 
\[ S(\lambda) := \sum_j \lambda_j 
= S(\lambda_+) -S(\lambda_-) 
= \lim_{k \to \infty} L_k(\lambda)
- \lim_{k \to \infty} L_k(-\lambda).\] 
For $\lambda \in \ell^1(J,\R)$, 
let $\co(\lambda)$ denote the weak-$*$-closure of $\cW\lambda$ 
and $\co^\bn(\lambda)$ denote the norm closure of this set.

\begin{lem}\mlabel{lem:convhul} 
For $\lambda,\mu \in \ell^1(J,\R)$ we consider the following conditions: 
\begin{description}
\item[\rm(I1)] $L_k(\mu) \leq L_k(\lambda)$ for $k \in \N$. 
\item[\rm(I2)] $L_k(-\mu) \leq L_k(-\lambda)$ for $k \in \N$. 
\item[\rm(I3)] $\sum_j \mu_j = \sum_j \lambda_j$. 
\end{description} 
Then the following assertions hold: 
\begin{description}
\item[\rm(i)] $\co(\lambda)$ consists of all 
elements $\mu$ satisfying {\rm(I1/2)}.
\item[\rm(ii)] $\co^\bn(\lambda)$ consists of all 
elements $\mu$ satisfying {\rm(I1)-(I3)}.
\item[\rm(iii)] $\mu \in \co(\lambda) \Leftrightarrow 
\mu_\pm \in \co(\lambda_\pm)$. 
\item[\rm(iv)] $0 \leq \mu' \leq \mu, \mu \in \co(\lambda) \Rarrow 
\mu' \in \co(\lambda).$
\item[\rm(v)] $\co(\lambda) = \co(\lambda_+) - \co(\lambda_-).$
\item[\rm(vi)] $\Ext(\co(\lambda)) \subeq 
\Ext(\co(\lambda_+)) - \Ext(\co(\lambda_-)).$ 
\end{description}
\end{lem}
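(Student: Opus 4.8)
The plan is to treat (i) as the cornerstone, deduce (iii)--(vi) from it by soft arguments, and isolate (ii) as the one place where genuine analysis is needed. For the inclusion $\co(\lambda)\subeq\{\mu:{\rm(I1),(I2)}\}$ I would only use that each $L_k$ and each $\mu\mapsto L_k(-\mu)$ is convex, $\cW$-invariant and weak-$*$ lower semicontinuous; hence $\{\mu:L_k(\mu)\le L_k(\lambda)\}$ and $\{\mu:L_k(-\mu)\le L_k(-\lambda)\}$ are weak-$*$-closed convex sets containing $\cW\lambda$, so they contain $\co(\lambda)=\oline{\conv}^{w^*}(\cW\lambda)$, and intersecting over $k\in\N$ gives the claim. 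The reverse inclusion is the heart of (i), and I would obtain it by Hahn--Banach separation: since $\co(\lambda)$ is weak-$*$-closed and convex, it suffices to show $\la\mu,x\ra\le\sup_{w\in\cW}\la w\lambda,x\ra$ for every $x$ in the predual $c_0(J,\R)$. Splitting $x=x_+-x_-$ and reducing to finite support by the negligibility of the $\ell^1$-tails, the estimate for the positive part is an Abel summation: ordering the relevant coordinates so that $x_+$ is non-increasing, one has $\la\mu,x_+\ra=\sum_j\big((x_+)_j-(x_+)_{j+1}\big)(\mu_1+\cdots+\mu_j)\le\sum_j\big((x_+)_j-(x_+)_{j+1}\big)L_j(\mu)$, using that the increments are non-negative and each partial sum is bounded by $L_j(\mu)$. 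Since $(x_+)_j-(x_+)_{j+1}\ge0$, {\rm(I1)} upgrades this to $\sum_j\big((x_+)_j-(x_+)_{j+1}\big)L_j(\lambda)=\sup_w\la w\lambda,x_+\ra$, the last equality being the rearrangement inequality read through the same Abel summation. Handling $x_-$ symmetrically via {\rm(I2)} applied to $-\mu$ and adding the two bounds yields $\la\mu,x\ra\le\sup_w\la w\lambda,x\ra$.

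Parts (iii), (iv), (v), (vi) then follow formally. Using \eqref{lkpm} in the form $L_k(\nu)=L_k(\nu_+)$ and $L_k(-\nu)=L_k(\nu_-)$, the characterisation in (i) becomes ``$L_k(\mu_+)\le L_k(\lambda_+)$ and $L_k(\mu_-)\le L_k(\lambda_-)$ for all $k$''; since $\lambda_\pm\ge0$ forces $L_k(-\lambda_\pm)=0$ (here $J$ is infinite), the same characterisation applied to $\lambda_\pm$ shows $\mu_\pm\in\co(\lambda_\pm)$ under exactly these inequalities, which is (iii). For (iv), $0\le\mu'\le\mu$ gives $L_k(\mu')\le L_k(\mu)\le L_k(\lambda)$ by monotonicity of $L_k$ and $L_k(-\mu')=0\le L_k(-\lambda)$, so $\mu'\in\co(\lambda)$ by (i). For (v), the inclusion $\co(\lambda)\subeq\co(\lambda_+)-\co(\lambda_-)$ is immediate from (iii) via $\mu=\mu_+-\mu_-$; conversely, for $\nu_\pm\in\co(\lambda_\pm)$ the entrywise bounds $0\le(\nu_+-\nu_-)_+\le\nu_+$ and $0\le(\nu_+-\nu_-)_-\le\nu_-$, together with monotonicity and \eqref{lkpm}, verify {\rm(I1)} and {\rm(I2)} for $\nu_+-\nu_-$, so it lies in $\co(\lambda)$. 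Finally (vi) follows from (v) by writing $\co(\lambda)=\co(\lambda_+)+\big(-\co(\lambda_-)\big)$ and invoking $\Ext(A+B)\subeq\Ext(A)+\Ext(B)$ for convex sets (a nontrivial splitting of one summand would produce a nontrivial midpoint of the extreme point in $A+B$), combined with $\Ext\big(-\co(\lambda_-)\big)=-\Ext(\co(\lambda_-))$.

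For (ii) the inclusion $\co^\bn(\lambda)\subeq\{\mu:{\rm(I1)}\text{--}{\rm(I3)}\}$ is easy: the $L_k$ are $1$-Lipschitz for $\|\cdot\|_1$ and $\nu\mapsto\sum_j\nu_j$ is norm-continuous, so {\rm(I1)}--{\rm(I3)} cut out a norm-closed convex $\cW$-invariant set containing $\lambda$. \emph{The reverse inclusion is the main obstacle.} The reason is structural: $\nu\mapsto\sum_j\nu_j$ is \emph{not} weak-$*$-continuous, and this single functional is exactly what distinguishes $\co^\bn(\lambda)$ from $\co(\lambda)$, since mass may escape to infinity under weak-$*$ limits. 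Given $\mu$ with {\rm(I1)}--{\rm(I3)} we already have $\mu\in\co(\lambda)$ by (i), so the real work is to manufacture a \emph{norm}-approximating sequence of convex combinations of $\cW\lambda$. I would do this by reduction to finite-dimensional majorisation: inside $\ell^1(S)$ for a countable $S\supeq\supp\mu\cup\supp\lambda$, truncate to increasing finite coordinate blocks, correct the finitely many truncated partial sums so that the finite Hardy--Littlewood--P\'olya/Rado theorem applies, realise the truncation of $\mu$ as a convex combination of permutations of the corrected $\lambda$, and bound the error by the $\ell^1$-tails, which {\rm(I3)} keeps controlled; equivalently one may cite the infinite-dimensional Schur--Horn theorem of \cite{Neu99}. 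I expect the delicate point to be precisely that a soft weak-$*$-to-norm upgrade of (i) fails, because positive and negative mass can cancel in a weak-$*$ limit (as in $\eps_{2n}-\eps_{2n+1}\to0$ with norms bounded away from $0$), so the approximating combinations must be constructed rather than merely extracted from (i).
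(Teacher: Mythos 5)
Your argument is correct in substance and reaches the two nontrivial inclusions by a genuinely different route. For the hard direction of (i) the paper simply invokes Neumann's infinite-dimensional Schur--Horn theorem (\cite[Prop.~2.8(2)]{Neu99}), which places any $\mu$ satisfying (I1/2) in the $\|\cdot\|_\infty$-closure of $\conv(\cW\lambda)$ and hence in $\co(\lambda)$; you instead give a self-contained duality argument (Hahn--Banach separation against the predual $c_0(J,\R)$ plus Abel summation), which is a legitimate alternative and makes the role of the functionals $L_k$ more transparent. Two details would have to be written out to make it airtight: first, after bounding $\la\mu,x_+\ra$ and $\la\mu,-x_-\ra$ separately, adding the two bounds only dominates $\sup_w\la w\lambda,x\ra$ if the two suprema can be approximately realized by a single finite permutation --- true because $x_+$ and $x_-$ have disjoint supports and $x\in c_0(J,\R)$ leaves room to park the unwanted positive and negative entries of $\lambda$ where $|x|$ is small, but not automatic; second, the identification of $\sum_j\bigl((x_+)_j-(x_+)_{j+1}\bigr)L_j(\lambda)$ with $\sup_w\la w\lambda,x_+\ra$ needs the construction of near-optimal permutations over finite blocks (the Abel summation only gives the inequality bounding each $\la w\lambda,x_+\ra$ from above). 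For (ii) the paper cites \cite[Lemma~2.5]{Neu99}, noting that Neumann's countability hypothesis is harmless since one may restrict to $\supp\mu\cup\supp\lambda$; your truncate-and-correct reduction to the finite Hardy--Littlewood--P\'olya/Rado theorem is essentially how that lemma is proved, but as written it is a plan rather than a proof, so one would either carry it out or fall back on the citation, as you yourself indicate. Parts (iii)--(vi) agree with the paper's soft deductions; your (v) is in fact slightly more direct (the paper reduces to $\lambda_+-\cW\lambda_-$ and routes through (iv), whereas you verify (I1/2) for $\nu_+-\nu_-$ straight from (i)), and your Minkowski-sum argument $\Ext(A+B)\subeq\Ext(A)+\Ext(B)$ for (vi) is the same idea the paper uses, which moreover records that the canonical components $\mu_\pm$ of an extreme point are themselves extreme.
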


\begin{prf} (i) Since the functionals $L_k$ are weak-$*$ lower  
semicontinuous and convex, $\mu \in \co(\lambda)$ 
implies (I1/2). 
From \cite[Prop.~2.8(2)]{Neu99} we derive that 
(I1/2) imply that $\mu$ is contained in the 
$\|\cdot\|_\infty$-closure of $\conv(\cW\lambda)$, 
hence in particular contained in $\co(\lambda)$. 

(ii) If $\mu \in \co^\bn(\lambda)$, then $\mu$ satisfies 
(I3), in addition to (I1/2), because the 
summation functional $S(\mu) := \sum_j \mu_j$ 
is invariant under~$\cW$. 
The converse follows from \cite[Lemma~2.5]{Neu99}. 
Actually Neumann assumes that $J$ is countable, but since 
his result can be applied to the countable subset  
$\supp(\mu) \cup \supp(\mu)$, the assertion holds in general. 

(iii) follows immediately from (i), 
$L_k(\mu) = L_k(\mu_+)$ and $L_k(-\mu) = L_k(\mu_-)$. 

(iv) is a consequence of (iii). 

(v) Any $\mu \in \co(\lambda)$ can be written as 
$\mu = \mu_+ - \mu_-$, and we have seen in (iii) that 
$\mu_\pm \in \co(\lambda_\pm)$. 
To verify $\supeq$, it suffices to show that 
$\lambda_+ - \cW\lambda_- \subeq \co(\lambda)$ because 
$\co(\lambda)$ is $\cW$-invariant. For $w \in \cW$ we observe that 
\[ 0 
\leq (\lambda_+ - w\lambda_-)_+ 
\leq \lambda_+, \] 
so that (iv) implies $(\lambda_+ - w \lambda_-)_+ \in 
\co(\lambda_+)$. We likewise derive from 
\[ 0 
\leq (\lambda_+ - w\lambda_-)_- 
\leq w\lambda_- \]
that $(\lambda_+ - w \lambda_-)_- \in 
\co(w\lambda_-) = \co(\lambda_-)$. 
Hence $\lambda_+ - w \lambda_- \in \co(\lambda)$ follows from (iii). 

(vi) If $\mu \in \Ext(\co(\lambda))$, then (iii) and (v) 
immediately imply that 
$\mu_\pm \in \Ext(\co(\lambda_\pm))$. 
\end{prf}

\subsection*{Extreme points} 

Now we turn to extreme points. 
Let $\lambda\in\cP$ throughout the following statements. 

\begin{lem} \mlabel{lem:exposed} 
$\lambda$ is an exposed point of $\co(\lambda)$ and in 
particular $\lambda \in \Ext(\co(\lambda))$. 
\end{lem}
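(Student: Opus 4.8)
The plan is to exhibit an explicit weak-$*$-continuous linear functional on $\ell^1(J,\R)$, i.e. an element $x\in c_0(J,\R)$, that attains its maximum over $\co(\lambda)$ uniquely at $\lambda$. Since both $\co(\lambda)$ and the notion of an exposed point are preserved by the $\cW$-action (apply $w$ to the exposing functional, noting $\co(\lambda)=\co(w\lambda)$), and $\cW\lambda$ contains a non-increasing representative, I may assume $\lambda$ is non-increasing for a fixed linear order on $J$ in which the positive values come first (in decreasing order), then the zeros, then the negative values; write $S_\pm=\supp(\lambda_\pm)$ and $p=|S_+|$. I would then choose $x\in c_0(J,\R)$ with $\supp(x)=\supp(\lambda)$, strictly decreasing along this order, with $x>0$ on $S_+$ and $x<0$ on $S_-$, and split it as $x=y-z$, where $y:=x_+\ge 0$ is supported on $S_+$ and $z:=x_-\ge 0$ on $S_-$. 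As $x$ has finite support it lies in the predual $c_0(J,\R)$, so $\langle\,\cdot\,,x\rangle$ is weak-$*$-continuous.

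For maximality I would take $\nu\in\co(\lambda)$ and use Lemma~\ref{lem:convhul}(iii) to get $\nu_\pm\in\co(\lambda_\pm)$, so $\nu=\nu_+-\nu_-$ with $\nu_\pm\ge 0$ of disjoint support. Expanding $\langle\nu,x\rangle=\langle\nu,y\rangle-\langle\nu,z\rangle$ and discarding the nonnegative cross terms $\langle\nu_-,y\rangle\ge 0$ and $\langle\nu_+,z\rangle\ge 0$ yields
\[ \langle\nu,x\rangle\le\langle\nu_+,y\rangle+\langle\nu_-,z\rangle. \]
Here I would bound $\langle\nu_+,y\rangle\le\langle\lambda_+,y\rangle$ by summation by parts: writing $P_k=(\nu_+)_1+\cdots+(\nu_+)_k$, one has $P_k\le L_k(\nu_+)=L_k(\nu)\le L_k(\lambda)=\lambda_1+\cdots+\lambda_k$ by \eqref{lkpm} and Lemma~\ref{lem:convhul}(i), and Abel summation against the strictly decreasing positive weights $y_1>\cdots>y_p>0$ converts these partial-sum inequalities into $\langle\nu_+,y\rangle\le\langle\lambda_+,y\rangle$; the symmetric estimate gives $\langle\nu_-,z\rangle\le\langle\lambda_-,z\rangle$. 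Since $\lambda_+$ and $z$, as well as $\lambda_-$ and $y$, have disjoint support, the right-hand sides sum to $\langle\lambda,x\rangle$, so $\langle\nu,x\rangle\le\langle\lambda,x\rangle$ for every $\nu\in\co(\lambda)$.

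For the exposing (uniqueness) statement I would trace the equality case. Equality forces the cross terms to vanish, hence $\nu_-=0$ on $S_+$ and $\nu_+=0$ on $S_-$, and it forces equality in both Abel estimates; since the weights are \emph{strictly} monotone this gives $P_k=\lambda_1+\cdots+\lambda_k$ for all $k\le p$, i.e. $(\nu_+)_k=\lambda_k$ on $S_+$, and likewise $\nu_-=\lambda_-$ on $S_-$. Finally $\sum_{k\le p}(\nu_+)_k=\|\lambda_+\|_1\ge\|\nu_+\|_1=\sum_j(\nu_+)_j$ (the inequality because $\nu_+\in\co(\lambda_+)$ lies in the positive cone, where $L_k\nearrow\|\cdot\|_1$) forces $(\nu_+)_j=0$ for $j\notin S_+$, so $\nu_+=\lambda_+$ and symmetrically $\nu_-=\lambda_-$; thus $\nu=\lambda$. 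Hence $\lambda$ is exposed by $\langle\,\cdot\,,x\rangle$, and an exposed point is extreme.

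The step I expect to be the genuine obstacle is exactly this uniqueness over the \emph{whole} weak-$*$-closed convex hull rather than over the orbit $\cW\lambda$: unlike the finite-dimensional permutohedron, $\co(\lambda)$ also contains limit points of the orbit and elements carrying mass at positions outside $\supp(\lambda)$, and a finitely supported functional cannot detect that mass directly. The device that resolves this is the two-sided majorization description of Lemma~\ref{lem:convhul}(i) together with \eqref{lkpm}, which feeds the extra $L_k$-inequalities into the equality analysis and rules out any escaped mass, while the strict monotonicity of $x$ is what upgrades ``maximal'' to ``uniquely maximal'' in the summation by parts.
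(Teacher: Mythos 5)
Your proof is correct, but it takes a genuinely different route from the paper's. The paper deduces from Lemma~\ref{lemma_app2} the containment $\cW\lambda\subeq\lambda-C_\lambda$ with $C_\lambda=\cone\{\eps_i-\eps_j\:\lambda_i>\lambda_j\}$, shows that the cone $\R_+B$ generated by the weak-$*$-compact base $B=\oline{\conv}^{w^*}\{\eps_i-\eps_j\:\lambda_i>\lambda_j\}$ is weak-$*$-closed (here the integrality of $\lambda$ gives $\la b,x_\lambda\ra\ge1$ on $B$, so $0\notin B$), and concludes $\co(\lambda)\subeq\lambda-\R_+B$; the exposing functional is then $x_\lambda=\sum_j\lambda_je_j$ itself. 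You instead work directly from the two-sided majorization description of $\co(\lambda)$ in Lemma~\ref{lem:convhul}(i),(iii) and run an Abel-summation argument against a strictly monotone finitely supported weight, tracing the equality case to pin down $\nu=\lambda$; your final step ruling out mass escaping $\supp(\lambda)$ via $\|\nu_+\|_1\le\lim_kL_k(\lambda_+)=\|\lambda_+\|_1$ is exactly the point that needs care, and you handle it correctly. The trade-off: your argument bypasses the reflection-group Lemma~\ref{lemma_app2} and the closed-cone-with-compact-base device entirely (and works verbatim for real-valued finitely supported $\lambda$, not just integral ones), but it leans on Lemma~\ref{lem:convhul}(i), which rests on Neumann's infinite-dimensional Schur--Horn theorem; the paper's choice of the specific functional $x_\lambda$ and the containment $\co(\lambda)\subeq\lambda-\R_+B$ also pay off later in Remark~\ref{rem:wexp}, where the uniform bound $\la b,x_\lambda\ra\ge1$ upgrades exposedness to weak-$*$-strong exposedness with no extra work, something your strictly-decreasing weight would require a further quantitative estimate to deliver.
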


\begin{prf} Every $w \in \cW = S_{(J)}$ is a finite product 
of transpositions $s_\alpha$ for roots 
$\alpha = \eps_i - \eps_j \in \Delta$. Applying 
Lemma~\ref{lemma_app2} to the subspace 
$V \subeq \R^{(J)}$ spanned by the vectors 
$e_i - e_j$ for which $s_{\eps_i - \eps_j}$ occurs in $w$, 
it follows that 
\[ \cW \lambda \subeq \lambda  - C_\lambda \quad \mbox{ for } \quad 
C_\lambda = \cone\{ \eps_i - \eps_j \: \lambda_i > \lambda_j\}.\] 
The set 
\[ B := \oline{\conv}^{w^*}(\{ \eps_i - \eps_j \: \lambda_i > \lambda_j\}) 
\subeq \ell^1(J,\R) \] 
is weak-$*$-compact, and for each pair $(i,j)$ with 
$\lambda_i \geq \lambda_j$ we obtain for 
$x_\lambda := \sum_j \lambda_j e_j$ 
\begin{equation}\label{newno1} 
\la \eps_i - \eps_j, x_\lambda \ra = \lambda_i - \lambda_j \geq 1.
\end{equation} 
We conclude that the convex cone 
$\R_+ B$ is weak-$*$-closed with the compact base $B$ (\cite[Ch. II, \S 7, No. 3]{Bou07}). 
Hence 
\[ 
\co(\lambda) \subeq \lambda - \R_+ B,
\]  
so that 
\begin{equation}\label{newno2} 
\{\lambda\} = \{ \mu \in \co(\lambda) \: \mu(x_\lambda) = \max \la 
\co(\lambda), x_\lambda \ra\}.
\end{equation} 
This proves that $\lambda$ is an exposed point of $\co(\lambda)$. 
\end{prf}

\begin{rem}\mlabel{rem:wexp} 
In the notation used in the proof of Lemma~\ref{lem:exposed}, 
it follows by inequality~\eqref{newno1} that for every $b\in B$ 
we have 
\[(\forall b\in B)\quad \la b, x_\lambda \ra\ge1.\] 
Now let us consider the weak-$*$-continuous functional
$f:=\la\cdot,x_\lambda\ra\colon\ell^1(J,\R)\to\R$.  
Then we get by the above inequality along with \eqref{newno2} 
that $f(\lambda)=\sup f(\co(\lambda))$ and for every sequence $\{\mu_n\}_{n\ge1}$ in $\co(\lambda)$ 
with $\lim\limits_{n\to\infty}f(\mu_n)=f(\lambda)$ we have 
$\mu_n=\lambda-t_nb_n$ with $b_n\in B$ and $t_n\in\R_+$ satisfying 
\[
0\leq t_n\leq t_n\la b_n,x_\lambda\ra 
=\la\lambda,x_\lambda\ra- \la\mu_n,x_\lambda\ra
=f(\lambda)-f(\mu_n)\]
for every $n\ge1$. 
Hence 
$\lim\limits_{n\to\infty}t_n=0$, and then $\lim\limits_{n\to\infty}\Vert \mu_n-\lambda\Vert=0$.
This shows that we actually have $\lambda\in\StrExp^*(\co(\lambda))$ 
(see Definition~\ref{ext_def}). 
\end{rem}

Since $\co(\lambda)$ is a bounded, hence weak-$*$-compact subset, 
\cite[Cor.~to Prop.~2 in \S I.7.1]{Bou07} implies that 
\begin{equation}
\Ext(\co(\lambda)) \subeq \oline{\cW\lambda}^{w^*}.
  \label{eq:extp} 
\end{equation}
On every bounded subset of $\ell^1(J,\Z)$, 
the weak-$*$-topology coincides with the product topology 
induced from $\Z^J$. For a subset  $F \subeq J$ 
and $\mu \in \Z^J$, we define $\mu_F \in \Z^J$ by 
\[ (\mu_F)_j := \begin{cases}
\mu_j  & \text{ for } j \in F \\ 
0 & \text{ else}. \\ 
\end{cases}\] 
We thus obtain 
\begin{equation}
  \label{eq:orbclos}
\oline{\cW\lambda}^{w^*}
= \{ w\lambda_F \: w \in \cW, F\subeq J, |F| < \infty\}.
\end{equation}
The set of all these elements can also be specified by the 
condition 
\[ (\forall n \in \Z)\  |\mu^{-1}(n)| 
\leq |\lambda^{-1}(n)|.\] 
We derive in particular that 
\begin{equation}
  \label{eq:posneg}
\lambda_\pm \in \oline{\cW\lambda}^{w^*}.
\end{equation}

Now the interesting question is, for which subset 
$F \subeq J$ is $\lambda_F$ an extreme point of $\co(\lambda)$. 
For $0 \leq \lambda$, we  call a restriction 
$\lambda_F$, $F \subeq \supp(\lambda)$, an {\it upper part} of $\lambda$ if 
$j \not\in F$ implies that $k \not\in F$ whenever 
$0 < \lambda_k < \lambda_j$. This means that, for $k := |F|$, the 
set  $\lambda(F)$ contains the $k$-largest values of $\lambda$. 

\begin{lem}
  \mlabel{lem:extreme} 
  \begin{description}
\item[\rm(i)]  If $\lambda_+ \not=0 \not=\lambda_-$, then 
$0\not\in \Ext(\co(\lambda))$. 
\item[\rm(ii)] For every extreme point $\mu \in \co(\lambda)$ 
either $\mu_+ \in \cW\lambda_+$ or 
$\mu_- \in \cW\lambda_-$. 
\item[\rm(iii)] If $\mu_+ = \lambda_+$ and 
$\mu_- \in \Ext(\co(\lambda_-))$ or 
$\mu_- = \lambda_-$ and $\mu_+ \in \Ext(\co(\lambda_+))$, then 
$\mu \in \Ext(\co(\lambda))$. 
\item[\rm(iv)] If $0 \leq \lambda$, then 
$\lambda_F \in \Ext(\co(\lambda))$ if and only if 
it is an upper part of $\lambda$. 
\end{description}
\end{lem}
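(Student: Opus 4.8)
The plan is to run all four assertions off the combinatorial description of $\co(\lambda)$ from Lemma~\ref{lem:convhul}: a vector $\nu\in\ell^1(J,\R)$ lies in $\co(\lambda)$ iff $L_k(\nu)\le L_k(\lambda)$ and $L_k(-\nu)\le L_k(-\lambda)$ for all $k$ (the conditions (I1)/(I2)), supplemented by the splitting $\nu\in\co(\lambda)\iff\nu_\pm\in\co(\lambda_\pm)$ and the monotonicity $0\le\nu'\le\nu\in\co(\lambda)\Rightarrow\nu'\in\co(\lambda)$. Throughout I would invoke \eqref{eq:extp} and \eqref{eq:orbclos} to write an extreme point as $w\lambda_F$ and, since $\co(\lambda)$ is $\cW$-invariant, reduce by a permutation to $w=\1$, i.e.\ to $\mu=\lambda_F$ with $F\subeq\supp\lambda$ finite. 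For (i), since $\lambda_+\ne0\ne\lambda_-$ both $a:=L_1(\lambda)=L_1(\lambda_+)>0$ and $b:=L_1(-\lambda)=L_1(\lambda_-)>0$, and as $L_k(\pm\lambda)=L_k(\lambda_\pm)$ is non-decreasing in $k$ we get $L_k(\lambda)\ge a$, $L_k(-\lambda)\ge b$ for all $k$. Choosing distinct $i,j$ and $0<c\le\min(a,b)$, the vector $v:=c(\eps_i-\eps_j)$ has $L_k(\pm v)=c$, hence $\pm v\in\co(\lambda)$ by (I1)/(I2); then $0=\tfrac12\bigl(v+(-v)\bigr)$ with $v\ne0$ shows $0\notin\Ext(\co(\lambda))$.

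For (ii) I work with $\mu=\lambda_F$. An extreme point of $\co(\lambda_\pm)$ lies in $\oline{\cW\lambda_\pm}^{w^*}$, and it belongs to $\cW\lambda_\pm$ precisely when its $\ell^1$-norm equals $\Vert\lambda_\pm\Vert_1$, i.e.\ when $F$ exhausts $\supp\lambda_\pm$; so it suffices to exclude $\Vert\mu_+\Vert_1<\Vert\lambda_+\Vert_1$ and $\Vert\mu_-\Vert_1<\Vert\lambda_-\Vert_1$ holding at once. If both held, pick $i\in\supp\lambda_+\setminus F$, $j\in\supp\lambda_-\setminus F$ and set $c:=\min(\lambda_i,-\lambda_j)>0$, $v:=c(\eps_i-\eps_j)$. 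The point is that $(\mu\pm v)_+$ and $(\mu\pm v)_-$ are obtained from restrictions of $\lambda_\pm$ by placing a value $c$ at a single new index; in each case the resulting value-multiset is submajorized by that of $\lambda_\pm$ (one entry $\lambda_i$, resp.\ $-\lambda_j$, is replaced by the smaller $c$, possibly at a relabelled index), so (I1)/(I2) gives $(\mu\pm v)_\pm\in\co(\lambda_\pm)$ and hence $\mu\pm v\in\co(\lambda)$ by Lemma~\ref{lem:convhul}(iii). This contradicts extremality of $\mu$, proving (ii).

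For (iii) I use that $\lambda_+$ is \emph{exposed} in $\co(\lambda_+)$ (Lemma~\ref{lem:exposed}) by $x_+:=\sum_j(\lambda_+)_je_j\in c_0(J)$. For $\nu\in\co(\lambda)$ one has $\nu_+\in\co(\lambda_+)$ and $\la\nu,x_+\ra\le\la\nu_+,x_+\ra\le\la\lambda_+,x_+\ra$ (the first step because $x_+\ge0$ and $\nu\le\nu_+$), with equality throughout iff $\nu_+=\lambda_+$. Thus $G:=\{\nu\in\co(\lambda):\nu_+=\lambda_+\}$ is the exposed face of $\co(\lambda)$ cut out by $\la\cdot,x_+\ra$, and $\mu\in G$. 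The affine bijection $\nu\mapsto\lambda_+-\nu=\nu_-$ carries $G$ into $\co(\lambda_-)$ and sends $\mu$ to $\mu_-$; since $\mu_-\in\Ext(\co(\lambda_-))$ it is extreme in this smaller convex set, so $\mu$ is extreme in $G$, and an extreme point of an exposed (hence extreme) face of $\co(\lambda)$ is extreme in $\co(\lambda)$. The symmetric hypothesis is treated identically with $x_-:=\sum_j(\lambda_-)_je_j$.

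For (iv), I argue the forward implication contrapositively: if $\lambda_F$ is not an upper part there are $k\in F$ and $j\in\supp\lambda\setminus F$ with $0<\lambda_k<\lambda_j$; for small $t>0$ set $\nu^{\pm}:=\lambda_F\pm t\eps_k$. Then $\nu^-\le\lambda_F$ coordinatewise, so $\nu^-\in\co(\lambda)$, while the values of $\nu^+$ are those of $\lambda_F$ with $\lambda_k$ raised to $\lambda_k+t\le\lambda_j$, hence still submajorized by the values of $\lambda$, so $\nu^+\in\co(\lambda)$; as $\lambda_F=\tfrac12(\nu^++\nu^-)$ this breaks extremality. Conversely, let $\lambda_F$ be an upper part with $|F|=k$ and $\lambda_F=\tfrac12(\alpha+\beta)$, $\alpha,\beta\in\co(\lambda)$. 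Since $0\le\lambda$ forces $\alpha,\beta\ge0$, the vanishing of $\lambda_F$ off $F$ forces $\alpha,\beta$ to be supported on $F$. The upper-part property gives $L_l(\lambda_F)=L_l(\lambda)$ for $l\le k$, whence $\co(\lambda)\cap\R^F=\co(\lambda_F)\cap\R^F$; and $\lambda_F\in\Ext(\co(\lambda_F))$ by Lemma~\ref{lem:exposed}, so $\lambda_F$ is extreme in this intersection, forcing $\alpha=\beta=\lambda_F$.

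The one genuinely delicate point, recurring in (i), (ii) and the forward half of (iv), is the verification that the perturbed vectors meet the inequalities (I1)/(I2); each instance reduces to the elementary monotonicity of $L_k$ under shrinking or relocating a single coordinate value, never exceeding an available larger value of $\lambda$. I expect the bookkeeping in (ii) to be the main obstacle: one must track how the positive and negative parts of $\mu\pm v$ acquire their new mass at indices lying \emph{outside} $\supp\lambda_\pm$ and yet remain submajorized by $\lambda_\pm$, which is exactly where the choice $c=\min(\lambda_i,-\lambda_j)$ and Lemma~\ref{lem:convhul}(iii)--(iv) must be applied with care.
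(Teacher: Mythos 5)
Your proof is correct, and it uses the same basic toolbox as the paper -- the $L_k$-characterization of $\co(\lambda)$ from Lemma~\ref{lem:convhul}, the reduction of extreme points to elements $w\lambda_F$ via \eqref{eq:extp} and \eqref{eq:orbclos}, and small perturbations that respect the inequalities (I1)/(I2) -- but the execution genuinely diverges in places. For (i) the two arguments are essentially identical (the paper perturbs $0$ by $\pm\delta\eps_j$ using $\co(\lambda_\pm)\subeq\co(\lambda)$; you use $\pm c(\eps_i-\eps_j)$). For (ii) the paper goes through Lemma~\ref{lem:convhul}(vi) to get $\mu_\pm\in\Ext(\co(\lambda_\pm))$ and then reduces the failure case to ``$0$ is not extreme for the restriction of $\lambda$ to $J\setminus(F_1\cup F_2)$'', i.e.\ to part (i); your explicit two-index perturbation $v=c(\eps_i-\eps_j)$ with $c=\min(\lambda_i,-\lambda_j)$ is the same mechanism with the bookkeeping carried out by hand, and your verification that the value-multisets of $(\mu\pm v)_\pm$ remain submajorized by those of $\lambda_\pm$ is sound. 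The most substantive difference is in (iii): the paper argues directly from the inequalities (I1) for $k\le|\supp\lambda_+|$ and $k>|\supp\lambda_+|$ that any convex decomposition $\mu=t\alpha+(1-t)\beta$ forces $\alpha_+=\beta_+=\lambda_+$, whereas you identify $\{\nu\in\co(\lambda)\: \nu_+=\lambda_+\}$ as the exposed face cut out by $\la\cdot,x_+\ra$ using Lemma~\ref{lem:exposed}, and then transfer extremality through the affine bijection $\nu\mapsto\nu_-$ onto a convex subset of $\co(\lambda_-)$; this buys a slightly stronger statement (the face is exposed, not merely extreme) and avoids the case-by-case inequality manipulation. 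In (iv) the paper first reduces to $L_1(\lambda_F)<L_1(\lambda)$ by stripping off the $k-1$ largest common values and then perturbs at a single index, and for the converse it invokes an analogue of (iii) via $0\in\Ext(\co(\lambda\res_{J\setminus F}))$; your direct perturbation $\lambda_F\pm t\eps_k$ at the offending index and your localization $\co(\lambda)\cap\{\nu\: \supp\nu\subeq F\}=\co(\lambda_F)\cap\{\nu\: \supp\nu\subeq F\}$ (valid because $L_l(\lambda_F)=L_l(\lambda)$ for $l\le|F|$ and both sets consist of nonnegative vectors) achieve the same ends a bit more explicitly. No gaps; the one place that deserved care -- the multiset submajorization checks in (ii) and (iv) -- is exactly where you put it, and those checks go through.
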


\begin{prf} (i) If $\lambda_+ \not=0 \not=\lambda_-$, then the description 
of $\co(\lambda)$ in Lemma~\ref{lem:convhul}(i) implies the existence 
of some $\delta > 0$ with $\pm \delta \eps_j \subeq \co(\lambda_\pm) \subeq 
\co(\lambda)$. Hence 
$0\not\in \Ext(\co(\lambda))$. 

(ii) Suppose that $\mu \in \Ext(\co(\lambda))$. We have 
already seen in Lemma~\ref{lem:convhul}(vi) that this implies 
\[ \mu_\pm \in \Ext(\co(\lambda_\pm)) 
\subeq \oline{\cW\lambda_\pm}^{w^*}.\] 
We therefore have 
$\mu_+ \in \cW \lambda_{F_1,+}$ and 
$\mu_- \in \cW \lambda_{F_2,-}$ for finite subsets 
$F_1 \subeq \supp(\lambda_+)$ and 
$F_2 \subeq \supp(\lambda_-)$. 
Assume, contrary to (ii), that 
$\lambda_{F_1,+} \not= \lambda_+$ and 
$\lambda_{F_2,-} \not= \lambda_-$. 
Then $\mu$ lies in the $\cW$-orbit of 
$\lambda_{F_1 \cup F_2} = \lambda_{F_1,+} - \lambda_{F_2,-}$. 
That this is not an extreme point follows from the fact that 
$0$ is not an extreme point of 
$\co(\lambda\res_{J \setminus (F_1 \cup F_2)})$, as we have seen in 
(i).

(iii)  Suppose that $\mu_+ = \lambda_+$ and write 
$\mu = t \alpha + (1-t) \beta$ with 
$\alpha, \beta \in \co(\lambda)$ and $0 < t < 1$.  
From the inequalities (I1) for 
$k \leq |\supp(\lambda_+)|$ it then follows that on 
$\supp(\lambda_+)$ both $\alpha$ and $\beta$ coincide 
with $\lambda_+$. The inequalities (I1) for $k > |\supp(\lambda_+)|$ 
further lead to 
$\lambda_+ = \alpha_+ = \beta_+$, and hence to 
$\mu_- = t \alpha_- + (1-t) \beta_-$. 
If $\mu_- \in \Ext(\co(\lambda_-))$, we thus arrive 
at $\alpha_- = \beta_- = \mu_-$, so that 
$\mu \in \Ext(\co(\lambda))$. 
The other assertion follows by replacing $\lambda$ by 
$-\lambda$. 

(iv) In view of \eqref{eq:extp} and \eqref{eq:orbclos}, it 
 remains to show that, for $\lambda \geq 0$ and 
$F \subeq \supp(J)$, the element 
$\lambda_F$ is extremal if and only if it is an upper part.
Since $\lambda$ is extreme by Lemma~\ref{lem:exposed}, 
we may w.l.o.g.\ assume that $\lambda_F \not=\lambda$, so that 
$L_k(\lambda_F) < L_k(\lambda)$ for some $k \in \N$. 
Let $k \in \N$ be minimal with this property, i.e., 
the $k-1$ largest values 
$\lambda_{j_1}, \ldots, \lambda_{j_{k-1}}$ 
of $\lambda$ and $\lambda_F$ coincide. In particular, 
$j_i \in F$ for $i \leq k-1$. By a similar argument as in 
(c), we see that, if 
$\lambda_F$ is extremal, then its restriction to 
$J' := J\setminus \{ j_1,\ldots, j_{k-1}\}$ 
is an extreme point of 
$\co(\lambda')$ for $\lambda' := \lambda\res_{J'}$. 
We may therefore assume that $k = 1$, i.e., that 
$L_1(\lambda_F) < L_1(\lambda)$. Since all 
other values of $\lambda_F$ are also values of $\lambda$, 
it follows that, for every $k \in \N$, 
\[ L_k(\lambda) - L_k(\lambda_F) \geq L_1(\lambda) - L_1(\lambda_F) > 0.\]  
If $\lambda_F \not=0$ and $\lambda_{j_0} > 0$, 
we obtain for 
$\delta < \min(L_1(\lambda) - L_1(\lambda_F), \lambda_{j_0})$ 
from Lemma~\ref{lem:convhul}(i) that 
$\lambda_F \pm \delta \eps_{j_0} \in \co(\lambda)$. 
Therefore $\lambda_F$ is not extremal. 
This proves that, whenever $\lambda_F$ is extremal, 
we must have $|F| = k-1$, so that $\lambda_F$ is an upper part. 

Suppose, conversely, that $\lambda_F$ is an upper part. 
Then a similar argument as in (iii), using that 
$0 \in \Ext(\co(\lambda))$ holds for  the restriction to $J\setminus F$, 
implies that $\lambda_F$ is extremal. 
\end{prf}

\begin{lem}\label{B5} 
We have 
\[ \co^\bn(\lambda) \cap \Ext(\co(\lambda)) = \cW\lambda,\] 
and if $\lambda \geq 0$ or $\lambda \leq 0$, then this is 
precisely the set of extreme points
\begin{equation}
  \label{eq:extnorm}
\Ext(\co^\bn(\lambda)) = \cW\lambda.
\end{equation}
\end{lem}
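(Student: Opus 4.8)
The plan is to prove the two identities in turn, deducing the second from the first together with one convex-geometric observation; throughout I would freely use the descriptions of $\co(\lambda)$ and $\co^\bn(\lambda)$ from Lemma~\ref{lem:convhul} and the facts \eqref{eq:extp}--\eqref{eq:orbclos} that identify $\Ext(\co(\lambda))$ with a subset of $\{w\lambda_F : w\in\cW,\ |F|<\infty\}$. For the first identity $\co^\bn(\lambda)\cap\Ext(\co(\lambda)) = \cW\lambda$, the inclusion $\supeq$ is immediate: $\cW\lambda\subeq\co^\bn(\lambda)$ by definition, while $\lambda\in\Ext(\co(\lambda))$ by Lemma~\ref{lem:exposed}, so $\cW$-invariance of $\co(\lambda)$ gives $\cW\lambda\subeq\Ext(\co(\lambda))$. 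For $\subeq$, let $\mu$ lie in the left-hand set. By \eqref{eq:extp} and \eqref{eq:orbclos}, $\mu = w\lambda_F$ for some $w\in\cW$ and finite $F\subeq J$; since both $\co^\bn(\lambda)$ and $\Ext(\co(\lambda))$ are $\cW$-invariant I may replace $\mu$ by $\lambda_F$. Then I would apply Lemma~\ref{lem:extreme}(ii) to the extreme point $\lambda_F$: as $(\lambda_F)_\pm = (\lambda_\pm)_F$, it yields either $(\lambda_+)_F\in\cW\lambda_+$ or $(\lambda_-)_F\in\cW\lambda_-$, i.e. $\supp(\lambda_+)\subeq F$ or $\supp(\lambda_-)\subeq F$.

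The decisive step is to combine this with the trace condition (I3) from Lemma~\ref{lem:convhul}(ii), which holds because $\lambda_F\in\co^\bn(\lambda)$ and reads $\sum_{j\notin F}\lambda_j = 0$. If $\supp(\lambda_+)\subeq F$, then $\lambda_j\le 0$ for every $j\notin F$, so $\sum_{j\notin F}\lambda_j = 0$ forces $\lambda_j = 0$ there; hence $\supp(\lambda)\subeq F$ and $\lambda_F=\lambda$. The case $\supp(\lambda_-)\subeq F$ is symmetric, using $\lambda_j\ge 0$ off $F$. Thus $\mu = w\lambda_F = w\lambda\in\cW\lambda$, completing the first identity.

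For the second identity, I would first record the always-valid inclusion $\Ext(\co^\bn(\lambda))\supeq\co^\bn(\lambda)\cap\Ext(\co(\lambda)) = \cW\lambda$, since a point of a convex set which happens to be extreme in a larger convex set containing it is extreme in the subset. The reverse inclusion is where the sign hypothesis enters. By Lemma~\ref{lem:convhul}(i)--(ii) one has $\co^\bn(\lambda) = \co(\lambda)\cap\{\mu : \sum_j\mu_j = \sum_j\lambda_j\}$. When $\lambda\ge 0$, condition (I2) forces every $\mu\in\co(\lambda)$ to satisfy $\mu\ge 0$, whence $\sum_j\mu_j = \lim_k L_k(\mu)\le\lim_k L_k(\lambda) = \sum_j\lambda_j$; so the affine hyperplane $\{\sum_j\mu_j = \sum_j\lambda_j\}$ supports $\co(\lambda)$ and $\co^\bn(\lambda)$ is an exposed face of $\co(\lambda)$. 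For an exposed face $C\cap H$ one has $\Ext(C\cap H) = \Ext(C)\cap H$, which by the first identity equals $\cW\lambda$. The case $\lambda\le 0$ then follows by applying this to $-\lambda$, using $\co(-\lambda) = -\co(\lambda)$ and $\cW(-\lambda) = -\cW\lambda$.

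The main obstacle is conceptual rather than computational: one must recognize that the summation functional $\mu\mapsto\sum_j\mu_j$, although \emph{not} weak-$*$-continuous on $\ell^1(J,\R)$ (its symbol is the constant function $\mathbf 1\notin c_0(J)$), nonetheless exposes $\co^\bn(\lambda)$ as a genuine face when $\lambda$ has constant sign, because the identity $\Ext(C\cap H) = \Ext(C)\cap H$ for a supporting hyperplane $H$ is purely algebraic and requires no topology. The precise role of the hypothesis $\lambda\ge 0$ (or $\lambda\le 0$) is exactly to guarantee that this functional attains its maximum over $\co(\lambda)$ along $\co^\bn(\lambda)$; for indefinite $\lambda$ the slice $\co^\bn(\lambda)$ cuts through the relative interior of $\co(\lambda)$ and the clean face argument is unavailable, which is why the statement is restricted to the definite case.
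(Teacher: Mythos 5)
Your proof is correct and follows essentially the same route as the paper: both arguments rest on the description $\co^\bn(\lambda)=\{\mu\in\co(\lambda)\: S(\mu)=S(\lambda)\}$ coming from Lemma~\ref{lem:convhul}, the structure of $\Ext(\co(\lambda))$ from \eqref{eq:extp}--\eqref{eq:orbclos} and Lemma~\ref{lem:extreme}, and the observation that for sign-definite $\lambda$ the $\cW$-invariant summation functional $S$ exposes $\co^\bn(\lambda)$ as a face of $\co(\lambda)$, so that $\Ext(\co^\bn(\lambda))=\Ext(\co(\lambda))\cap\co^\bn(\lambda)$. The only (harmless) difference is organizational: you establish the general intersection formula first, using Lemma~\ref{lem:extreme}(ii) together with the constraint $\sum_{j\notin F}\lambda_j=0$ in place of the paper's computation of $S$ on the upper-part representatives from Lemma~\ref{lem:extreme}(iv), and then deduce \eqref{eq:extnorm} from it.
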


\begin{prf} 
First we note that Lemma~\ref{lem:convhul}(i),(ii) imply that 
\[ \co^\bn(\lambda) = \{ \mu \in \co(\lambda) \: 
S(\mu) = S(\lambda)\}. \] 
If $\lambda \geq 0$, then $S(\mu) \leq S(\lambda)$ for 
every $\mu \in \co(\lambda)$, 
so that $\co^\bn(\lambda)$ is a face of $\co(\lambda)$. 
Therefore 
\[ \Ext(\co^\bn(\lambda)) 
= \Ext(\co(\lambda)) \cap \co^\bn(\lambda).\] 
From the description of the extreme points of 
$\co(\lambda)$ in Lemma~\ref{lem:extreme}(iv) 
as elements conjugate to some upper part $\lambda_F$, 
we see that whenever $\lambda_F \not=\lambda$, then 
$S(\lambda_F) < S(\lambda)$. We thus arrive at \eqref{eq:extnorm}. 
The same argument applies for $\lambda \leq 0$. 

In general, we know that the extreme points $\mu$ of 
$\co(\lambda)$ are precisely the Weyl groups orbits of 
elements of theform 
$\lambda_+ - \lambda_{-,F}$, where $\lambda_{F,-}$ is an upper part of 
$\lambda_-$ and of $\lambda_{F,+} - \lambda_{-}$, where $\lambda_{F,+}$ 
is an upper part of $\lambda_+$ (Lemma~\ref{lem:extreme}). Since 
\[ S(\lambda_+ - \lambda_{-,F})
= S(\lambda_+)  - S(\lambda_{-,F})
> S(\lambda_+)  - S(\lambda_{-}) = S(\lambda) \] 
for $\lambda_{-,F} \not=\lambda_-$ and 
\[ S(\lambda_{+,F} - \lambda_{-})
< S(\lambda_+)  - S(\lambda_{-})= S(\lambda) \] 
for $\lambda_+ \not= \lambda_{F,+}$, every 
extreme point of $\co(\lambda)$ contained in 
$\co^\bn(\lambda)$ lies in $\cW\lambda$. 
This completes the proof. 
\end{prf}

\begin{prop}\mlabel{obw} 
For arbitrary $\lambda\in\cP$ we have 
\begin{equation}
  \label{eq:extnorm6}
\Ext(\co^\bn(\lambda)) = \co^\bn(\lambda) \cap \Ext(\co(\lambda)) =\cW\lambda.
\end{equation}
\end{prop}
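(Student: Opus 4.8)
The plan is to establish the two outer equalities of \eqref{eq:extnorm6} separately, using Lemma~\ref{B5} for the middle one. Indeed, Lemma~\ref{B5} already gives $\co^\bn(\lambda)\cap\Ext(\co(\lambda))=\cW\lambda$ for arbitrary $\lambda$, so the only genuinely new content is the identity $\Ext(\co^\bn(\lambda))=\cW\lambda$; equivalently, that passing to the cross-section $\co^\bn(\lambda)=\{\mu\in\co(\lambda)\colon S(\mu)=S(\lambda)\}$ creates no new extreme points. This is exactly the point that was not covered by Lemma~\ref{B5}, since for mixed $\lambda$ (i.e.\ $\lambda_+\neq0\neq\lambda_-$) the set $\co^\bn(\lambda)$ is no longer a face of $\co(\lambda)$.

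The inclusion $\cW\lambda\subeq\Ext(\co^\bn(\lambda))$ is the easy half. By Lemma~\ref{lem:exposed}, $\lambda$ is an extreme point of the larger set $\co(\lambda)$; since $\lambda\in\co^\bn(\lambda)\subeq\co(\lambda)$, any convex decomposition of $\lambda$ inside $\co^\bn(\lambda)$ is in particular a decomposition inside $\co(\lambda)$ and hence trivial, so $\lambda\in\Ext(\co^\bn(\lambda))$. As $\co^\bn(\lambda)$ is $\cW$-invariant, so is its extreme boundary, and therefore the whole orbit $\cW\lambda$ consists of extreme points of $\co^\bn(\lambda)$.

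The reverse inclusion $\Ext(\co^\bn(\lambda))\subeq\cW\lambda$ is the crux, and I would argue it by contradiction. Fix $\mu\in\co^\bn(\lambda)$ with $\mu\notin\cW\lambda$ and aim to exhibit a nonzero direction $v$ with $S(v)=0$ and $\mu\pm v\in\co(\lambda)$; then $\mu\pm v\in\co^\bn(\lambda)$ and $\mu=\frac12\big((\mu+v)+(\mu-v)\big)$ shows that $\mu\notin\Ext(\co^\bn(\lambda))$. The natural candidate is a single transposition move $v=\delta(\eps_j-\eps_k)$, which automatically satisfies $S(v)=0$. By Lemma~\ref{lem:convhul}(i),(ii), the condition $\mu\in\co^\bn(\lambda)$ means precisely that $\mu$ is majorized by $\lambda$, namely $L_m(\mu)\le L_m(\lambda)$ and $L_m(-\mu)\le L_m(-\lambda)$ for all $m\in\N$, together with $S(\mu)=S(\lambda)$. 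Majorization theory in the spirit of \cite{Neu99} then yields that a point of such a permutahedron which is not a permutation of $\lambda$ lies in the relative interior of a transposition segment: one finds indices $j,k$ with $\mu_j\neq\mu_k$ and a $\delta>0$ for which both $\mu\pm\delta(\eps_j-\eps_k)$ still satisfy every inequality $L_m(\pm\,\cdot)\le L_m(\pm\lambda)$. This produces the required $v$, and together with the first half and Lemma~\ref{B5} it gives \eqref{eq:extnorm6}.

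I expect the main obstacle to be exactly this last construction in the present infinite-dimensional, mixed-sign setting. Because $\{S=S(\lambda)\}$ now meets the interior of $\co(\lambda)$ rather than supporting it, one cannot simply invoke that $\co^\bn(\lambda)$ is a face, as was done for $\lambda\ge0$ or $\lambda\le0$ in Lemma~\ref{B5}; one must genuinely verify that a transposition move keeping both $\mu\pm v$ majorized by $\lambda$ exists. The delicate points are that $\mu$ may have infinite support and that several constraints $L_m(\mu)=L_m(\lambda)$ may be active simultaneously, so selecting the pair $(j,k)$ amounts to locating two coordinates with enough slack to move both ways. Here the finitely supported reduction used throughout \cite{Neu99}, applied to the countable set $\supp(\mu)\cup\supp(\lambda)$, is what makes the argument tractable and reduces the claim to the classical description of the edges of a finite permutahedron as the transposition directions $\eps_j-\eps_k$.
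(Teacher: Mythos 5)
Your reduction is right as far as it goes: Lemma~\ref{B5} already gives $\co^\bn(\lambda)\cap\Ext(\co(\lambda))=\cW\lambda$, the inclusion $\cW\lambda\subeq\Ext(\co^\bn(\lambda))$ is immediate from Lemma~\ref{lem:exposed} and $\cW$-invariance, and the entire content of the proposition is $\Ext(\co^\bn(\lambda))\subeq\cW\lambda$. But your argument for that inclusion rests on a single unproved claim: that every $\mu\in\co^\bn(\lambda)\setminus\cW\lambda$ admits indices $j,k$ and $\delta>0$ with $\mu\pm\delta(\eps_j-\eps_k)\in\co^\bn(\lambda)$. You attribute this to ``majorization theory in the spirit of \cite{Neu99}'' and then, in your closing paragraph, correctly identify it as the main obstacle --- without resolving it. That is precisely the hard part of the proposition, not a routine citation. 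The proposed rescue, restricting to $\supp(\mu)\cup\supp(\lambda)$ and invoking the edge structure of a \emph{finite} permutahedron, does not apply: for $\mu\in\ell^1(J,\R)$ the support may be countably infinite, which is exactly the case one must rule out, and with infinitely many constraints $L_m(\pm\,\cdot)\le L_m(\pm\lambda)$ whose slacks $L_m(\lambda_\pm)-L_m(\mu_\pm)$ can tend to $0$, the existence of a uniform $\delta$ working for both signs of the perturbation requires a genuine argument. So as written there is a gap.

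For comparison, the paper's proof avoids perturbations entirely and instead shows that every extreme point of $\co^\bn(\lambda)$ has \emph{finite} support, after which the finite-dimensional case (\cite[Lemma I.19]{Ne98}) finishes the job. Given $\mu\in\Ext(\co^\bn(\lambda))$, one splits $J=J_+\,\dot\cup\,J_-$ with $\supp\mu_\pm\subeq J_\pm$, observes that $\mu_+$ must be an extreme point of the set $S=\{\nu\ge 0:\ L_k(\nu)\le L_k(\lambda_+),\ S(\nu)=c\}$ with $c=S(\lambda_+)+S(\mu_-)-S(\lambda_-)$, and identifies $S$ as $\co^\bn(\lambda')$ for an explicitly constructed finitely supported $\lambda'\ge 0$ (built from $a_k'=\min(L_k(\lambda_+),c)$). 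The already-established nonnegative case of Lemma~\ref{B5} then forces $\supp\mu_+$ to be finite, and symmetrically for $\mu_-$. If you want to keep your perturbation strategy, you would need to actually construct the two-sided move in the infinite-support, mixed-sign situation; the paper's detour through the auxiliary weight $\lambda'$ is exactly the device that sidesteps this difficulty.
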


\begin{prf} 
Due to Lemma~\ref{B5} we only have to prove that 
\begin{equation}\label{obw_proof_eq1}
\Ext(\co^\bn(\lambda))\subeq\cW\lambda. 
\end{equation}
First note that this inclusion is a direct consequence of 
\cite[Lemma I.19]{Ne98} if the index set $J$ is finite. 
It easily follows from this remark that if $\mu\in\Ext(\co^\bn(\lambda))$ and $\supp\mu$ is finite, then $\mu\in\cW\lambda$. 
Therefore, in order to prove that \eqref{obw_proof_eq1} holds true, 
it suffices to prove that the support of any extreme point of $\co^\bn(\lambda)$ 
is a finite set. 
 
To this end let $\mu\in\Ext(\co^\bn(\lambda))$ arbitrary 
and write $\mu=\mu_+-\mu_-$ with $\mu_\pm\ge0$ and 
$\supp\mu_+\cap\supp\mu_-=\emptyset$. 
There exists a partition $J=J_+\cup J_-$ with $\supp\mu_\pm\subset J_\pm$. 
If $J$ is infinite, then $J_+$ or $J_-$ is infinite. 
Without loss of generality, we assume that $J_+$ is infinite. 
If we denote by $\lambda_1\ge\dots\ge\lambda_N$ the positive values of $\lambda_+$ counted according to the multiplicities, 
then we may also assume that 
\[\supp\lambda_+=\{1,\dots,N\}\subset\N\subset J_+
\] 
and for a suitable subset $\widetilde{J}_+\subset J_+$ 
we have the partition $J_+=\N\dot\cup\widetilde{J}_+$. 

It follows by Lemma~\ref{lem:convhul}(ii) and \eqref{lkpm} that the 
property $\mu\in\co^\bn(\lambda)$ is equivalent to the conditions 
that for every integer $k\ge1$ we have $L_k(\mu_\pm)\le L_k(\lambda_\pm)$, 
and moreover $S(\mu)=S(\lambda)$.  
If we denote 
\[c:=S(\lambda_+)+S(\mu_-)-S(\lambda_-)\le S(\lambda_+),\]
and 
\[S=\{\nu\in \ell^1(J_+,\R):\ 0\le\nu,\ 
L_k(\nu)\le L_k(\lambda_+)\text{ for }k\ge1,\ 
S(\nu)=c\}\]
it then follows that $\mu_+\in S$. 

Note that we actually have $\mu_+\in\Ext(S)$. 
In fact, if $\mu_+$ were a nontrivial convex combination of two different elements in $S$, 
then we can extend these elements to functions on $J$ which are both equal to $\mu_-$ on $J_-$. 
These functions belong to $\co^\bn(\lambda)$ by 
Lemma~\ref{lem:convhul}(ii) and their corresponding convex combination is $\mu$, which is impossible. 

We now claim that 
$S=\co^\bn(\lambda')$, 
where $0\le \lambda'\in\ell^1(J_+,\R)=\ell_1(\N\cup\widetilde{J}_+,\R)$ 
is defined such that $\supp\lambda'\subeq\N$, and 
$\lambda'_k=a'_k-a'_{k-1}$, 
where $a'_k=\min(L_k(\lambda_+),c)$ for every $k\ge1$ and $a'_0=0$. 
Note that $a'_k\nearrow c$ as $k\to\infty$, 
and $\lambda'_1\ge\lambda'_2\ge\cdots\ge 0$ 
(the sequences $(a_k')$ and $(L_k(\lambda_+))$ are ``concave''). 
The set $\supp\lambda'$ is finite since so is $\supp\lambda_+$. 
Moreover we have $L_k(\lambda')=a'_k=\min(L_k(\lambda_+),c)$ for every $k\ge1$ and $S(\lambda')=c$, hence by using Lemma~\ref{lem:convhul}(ii) again 
we get $S=\co^\bn(\lambda')$, as claimed. 

It now follows that $\mu_+\in\Ext(\co^\bn(\lambda'))$ with $\lambda'\ge0$, 
and then $\supp\mu_+$ is a finite set as a consequence of Lemma~\ref{B5}. 
We may therefore choose a partition $J= J_+ \cup J_-$ in such a way that 
$J_-$ is infinite. Then a 
similar reasoning shows that $\supp\mu_-$ is likewise finite. 
Thus any extreme point $\mu\in\Ext(\co^\bn(\lambda))$ has finite support, 
and we are done, in view of the remarks at the very beginning of the proof. 
\end{prf}

\subsection*{Acknowledgment}
The present research was begun during a one-month visit of
first-named author
 at the Department of Mathematics of the Technische Universit\"at
Darmstadt,
and was further developed during the conferences held at the
Emmy-Noether-Zentrum in Erlangen and at the Mathematische
Forschungsinstitut
Oberwolfach; the hospitality of these institutes is gratefully
acknowledged.

\end{document}